\newcommand\thda{\mathrel{\rotatebox[origin=c]{-90}{$\twoheadrightarrow$}}}
\newcommand\thua{\mathrel{\rotatebox[origin=c]{90}{$\right) $}}}
\theoremstyle{plain}
\newtheorem{thm}{Theorem}[section]
\newtheorem{lem}[thm]{Lemma}
\newtheorem{prop}[thm]{Proposition}
\newtheorem{cor}[thm]{Corollary}
\theoremstyle{definition}
\newtheorem{defn}[thm]{Definition}
\newtheorem{exmp}[thm]{Example}
\newtheorem{rem}[thm]{Remark}
\begin{document}
\newcommand{\Ran}{\mathrm{Ran}}
\newcommand{\id}{\mathrm{id}}
\newcommand{\p}{{\preccurlyeq}}
\newcommand{\pp}{{\preceq}}
\newcommand{\sq}{{\sqsubseteq}}
\newcommand{\va}{{\vartriangle}}
\newcommand{\ua}{{\uparrow}}
\newcommand{\da}{{\downarrow}}
\newcommand{\la}{{\lambda}}

\numberwithin{equation}{section}

\renewcommand{\theequation}{\thesection.\arabic{equation}}

\begin{frontmatter}

\title{Characterization of  t-norms  on normal convex functions}

\author{Jie Sun\corref{cor}}
\ead{jiesun1027@163.com}

\cortext[cor]{Corresponding author.}
\address{School of Mathematics,  Southwest Minzu University, Chengdu 610041, China}

\begin{abstract}
Type-2  fuzzy set (T2 FS) were introduced by Zadeh in 1965,  and the membership degrees of  T2 FSs are type-1 fuzzy sets (T1 FSs).  Owing to the fuzziness of membership degrees,   T2 FSs can better model the uncertainty of real life, and thus, type-2 rule-based  fuzzy systems (T2 RFSs) become  hot research topics in recent decades. 
  In T2 RFS, the compositional rule of inference is based on triangular norms (t-norms) defined   on complete lattice  \((\mathbf{L},\sq)\) 
  (\(\mathbf{L}\) is the set of all convex normal functions from \([0,1]\) to \([0,1]\), and \(\sq\) is the so-called convolution order). Hence, the  choice of t-norm  on \((\mathbf{L},\sq)\) may influence the performance  of T2 RFS. Therefore, it is significant to broad the set of t-norms among which domain  experts can choose most suitable one.  To construct t-norms on \((\mathbf{L},\sq)\),  the mainstream method is convolution  \(\ast_\va\) which is induced by two operators \(\ast\) and \(\va\) on the unit interval \([0,1]\).  A key problem appears naturally, when convolution  \(\ast_\va\) is a t-norm on  \((\mathbf{L},\sq)\). This paper has solved this problem completely. Moreover,  note that the computational complexity of operators prevent the application of T2 RFSs.   This paper also provides one kind of convolutions which are t-norms on \((\mathbf{L},\sq)\) and extremely easy to calculate.\end{abstract}

\begin{keyword}
Type-2 fuzzy sets\sep  type-2 rule-based  fuzzy systems \sep triangular norms \sep convolutions\sep  convex normal functions
\end{keyword}
\end{frontmatter}

\section{Introduction}
 Fuzzy set theory were introduced by Zadeh in 1965 \cite{za65}.  Later on,  one rule-based  fuzzy system (RFS) were given by Mamdani and Assilian \cite{ma74, ma75} to control a steam engine, that is  the  earliest   application of fuzzy sets in real world. Until now RFSs have been successfully applied to a  quantity of industrial processes, see  e.g. \cite{pr11,pr24,su85,va95}.

A RFS contains four components—rules, fuzzifier, inference, and output processor  as shown in Figure 1. Once the rules have been established, the fuzzy system can be viewed as a mapping from inputs to outputs.  When T1 (T2, resp.) FSs are used to model the linguistic variable that appears in rule's antecedent and consequent, the RFS is  called  a T1 (T2, resp.) RFS.   In recent decades, many authors have studied T2 RFSs \cite{ch21,ku14,me24,su25}, especially interval-valued  (IV)  RFSs \cite{co06, fe16} (since each IV FS  can be expressed by a T2 FS,    IV RFSs can be viewed as one kind of  T2 RFSs).  Note that  in the research   of  T2 RFSs,  the membership  degrees of  T2 FSs  are usually assumed to be   convex normal functions from \([0,1]\) to \([0,1]\).  

Triangular norms on the unit interval \([0,1]\) were introduced  by Schweizer and Sklar \cite{sc83}. Due to the close connection between order theory and fuzzy set theory, see e.g. \cite{go67}, t-norms on generally  partially ordered sets (posets) have been studied by many authors, see e.g. \cite{la23, de04, zh05}.

\begin{figure}[!t]
    \centering
    \includegraphics[width=0.6\textwidth]{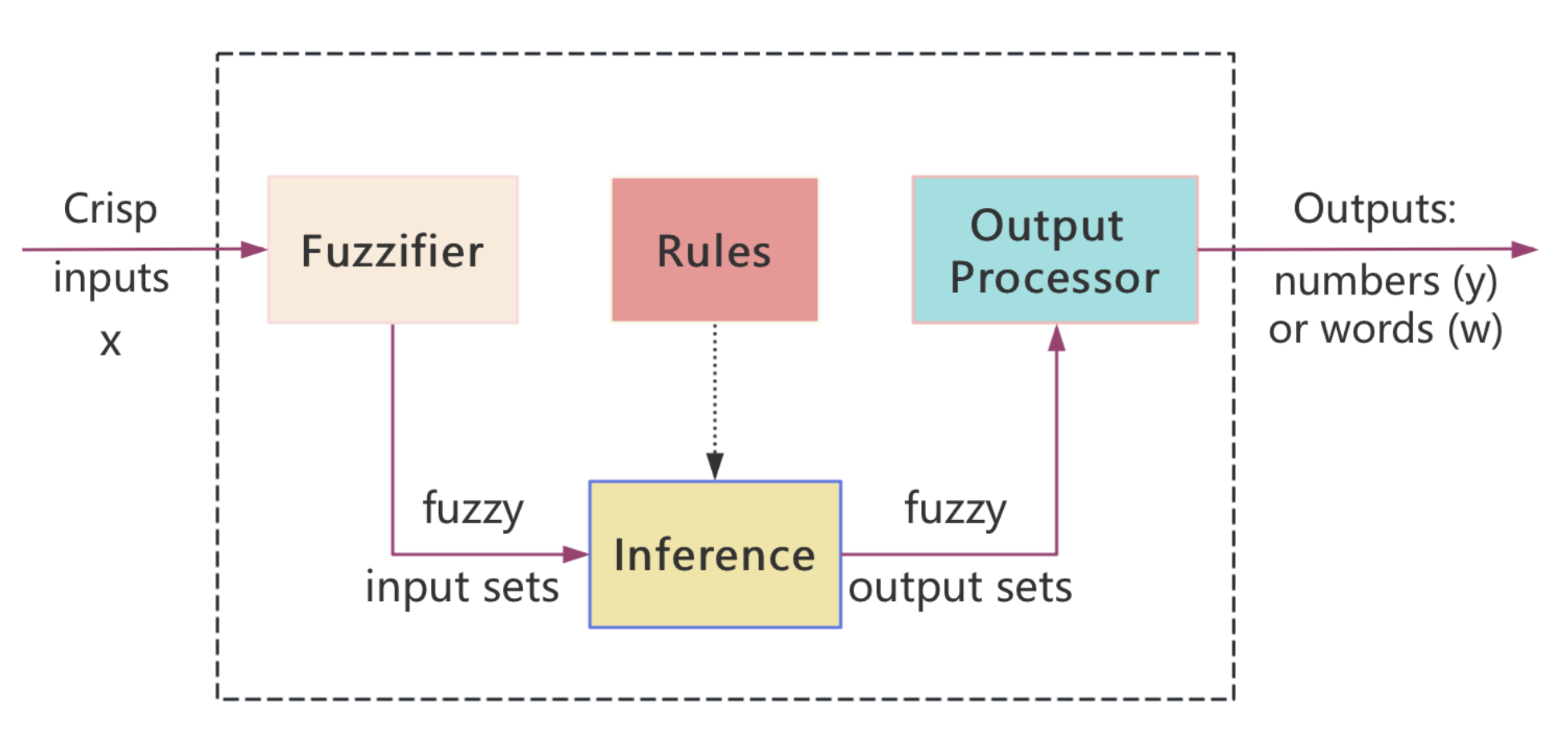}
    \caption{Rule-based fuzzy system}
    \label{fig}
\end{figure}

Triangular norms on complete lattices  play an important role in the inference block of  RFS.  For example, suppose that there is only one rule in Figure 1, that is  \[ \text{IF input is \(A\), THEN output is \(B\)}.\]   According to Mamdani implication (see \cite{me24,ma75})  \[\mu_{A\rightarrow B}(x,y)=A(x)\ast B(y)\]  and Zadeh's compositional rule of inference (see \cite{fu93,kl00,za73}), the inference block assigns  a  fuzzy  input set \(A'\)  to a  fuzzy  output set  \(B'\) given by  \[B'(y)= \sup\{A'(x) \ast A(x) \ast B(y)\mid x\in X\},\] where \(\ast\) is a t-norm on certain complete lattice.  Concretely,
\begin{itemize}
\item In  T1 RFS,    all fuzzy sets \(A, B, A', B'\) are type-1 fuzzy sets,  and \(\ast\) is a t-norm on complete lattice \(([0,1],\le)\) such as minimum or product. 
\item In  IV RFS,    all fuzzy sets \(A, B, A', B'\) are  interval-valued fuzzy sets,  and \(\ast\) is a t-norm on complete lattice  \((\mathbf{I}^{[2]},\le_I)\) (\(\mathbf{I}^{[2]}\) is the set of all closed subintervals of \([0,1]\),  and \(\le_I\) is the interval-valued order \cite{de03}).
\item  In T2 RFS,   all fuzzy sets \(A, B, A', B'\)  are type-2 fuzzy sets whose membership degrees are elements of  \(\mathbf{L}\),  and \(\ast\) is a t-norm on complete lattice \((\mathbf{L}, \sq)\). 
\end {itemize}
Moreover, there exists  order-preserving embedding from lattice \(([0,1],\le)\) to lattice  \((\mathbf{I}^{[2]},\le_I)\), from lattice \((\mathbf{I}^{[2]},\le_I)\)  to lattice \((\mathbf{L}, \sq)\), respectively, which  means that IV RFS is the generalization of   T1 RFS, and  T2 RFS is the generalization of   IV RFS to some extent. 

As mentioned above,  t-norms  play an important role in RFS  and may influence  performance of  RFS.  To provide more choice for  design of  RFS, it is  necessary to study  construction of  t-norms.  However, though 
 the construction of t-norms on lattice \(([0,1],\le)\) and   \((\mathbf{I}^{[2]},\le_I)\)   have been deeply studied for T1  RFSs and IV RFSs, respectively,  the same has not happened with  t-norms on \((\mathbf{L},\sq)\) for T2 RFSs. 
 
To construct operators on \(\mathbf{L}\) or \(\mathbf{M}=[0,1]^{[0,1]}\) (all maps from [0,1] to [0,1]),  
the  mainstream method  is based on  convolution (see Definition \ref{df}), which has a close relation with  Zadeh's extension principle \cite{za75}. Convolution were widely studied as   (generalized) extended t-norm  \cite{he22,wu20,zh19}, uninorm \cite{li22,xi18}, implication \cite{wa15,wa18}, aggregation\cite{ta14,to17}, overlap function \cite{ji22,li20}, etc.
 In particular, through the research on convolutions \(\wedge_\wedge\) (extended minimum) and \(\vee_\wedge\) (extended maximum),  a series of significant properties of  poset \((\mathbf{M},\sq)\) and its subposets, such as  (\(\mathbf{L},\sq\)), have been obtained by Harding, Walker and Walker (see \cite{ha08,ha10,wa05}).  Besides, Zhang and Hu \cite{zh25} showed that one kind of convolution \(\ast_\va\) can be calculated through  \(\alpha\)-cut.  Hu and Wang \cite{hu142} provided an equivalent characterization of  convolution order \(\sq\) on \(\mathbf{L}\) through the strong \(\alpha\)-cut, that makes the convolution order \(\sq\) more intuitive.

To construct t-norms on \((\mathbf{L},\sq)\) for T2 RFS,   a key problem appears naturelly, when   convolution is   a t-norm on \((\mathbf{L},\sq)\). In 2014,  Hernández,   Cubillo  and Torres-Blanc \cite{he14} provided  sufficient but not necessary  conditions under which  convolution  is a \(t_r\)-norm (t-norm satisfying more restrictive axioms) on \((\mathbf{L},\sq)\).  In this paper, we  completely solve the above problem.  
The main results are listed as follows.
\begin{itemize}
 \item[1)]  The  necessary and  sufficient  conditions, under which convolution is a t-norm on \((\mathbf{L},\sq)\),  is obtained, see Theorem \ref{24} and Theorem \ref{22}.  A large number of  t-norms on \((\mathbf{L},\sq)\) can be constructed  by  Theorem \ref{22}. As an application, experts can choice  t-norms more flexibly  to design  T2 RFS. 
  
  \item[2)] One kind of  convolutions are obtained which are t-norms on \((\mathbf{L},\sq)\) and extremely easy to calculate, see Definition \ref{d6} and Proposition \ref{p}. As a result, T2 RFSs, employing  this kind of convolutions, are relatively easy to compute.
     
 \item[3)] New equivalent characterization of convolution order \(\sq\) are given, see   Proposition \ref{17}. This result may  promote the research of  aggregation  operators (one kind of monotone functions)  on \((\mathbf{L},\sq)\).   Aggregation refers to the process of combining several values into a single one, and are used for the fusion of information \cite{gr09}. In particular, arithmetic mean  is an aggregation  operator.
\end{itemize}

 The paper are organized as follows. Section II reviews  definitions and properties of t-norm, \(t_r\)-norm, type-2 fuzzy set and convolution. Section III shows that \(\ast_\va\) is a t-norm if and only if  it is a \(t_r\)-norm, and provides the necessary conditions under which convolution  \(\ast_\va\) is a t-norm. Section IV  proves that the necessary conditions given in  Section III  is also sufficient.  Section V provides the necessary and sufficient conditions under which   \(\ast_\va\) is a t-conorm or \(t_r\)-conorm.

\section{Preliminaries}
In this section, we recall some  definitions and properties which are used in the sequel.
 \subsection{T-norm and L-fuzzy set}
Let \(\ast\) be a binary operation on  \(X\). The algebra \((X,\ast)\) is called a \textit{semigroup}, if for all \(x,y,z\in X\), \((x\ast y)\ast z= x\ast(y\ast z)\). 
The algebra \((X,\ast)\) is called \emph{commutative}, if  for all \(x,y\in X\), \(x\ast y=y\ast x\).
An element \(e\in X\) is called the \emph{unit} of the algebra  \((X,\ast)\), if \(e\ast x=x\ast e =x\) for all \(x\in X\).

Let \(\le\) be a   partial order  on \(X\).  The triple \((X,\ast,\le)\) is called a \emph{partially ordered semigroup} or \emph{posemigoup} for brief,  if  \((P,\ast)\) is a semigroup such that  \(x\ast y\le x\ast z,\) \(y\ast x\le z\ast x\) whenever \(y\le z\).  

\begin{defn}
Let  \((P,\le,0_P,1_P)\) be a bounded poset. A binary operation \(\ast:P^2\to P\) is called  a t-norm (t-conorm resp.),  if  \((P,\ast,\le)\) is a  commutative posemigroup with \(1_P\) (\(0_P\) resp. ) being the unit.
\end{defn}

Two posemigroups \((P_1,\ast_1, \le_1)\) and \((P_2,\ast_2, \le_2)\) are called to be \emph{isomorphic}, if there is an order isomorphism \(\tau: P_1\to P_2\)  such that for all \(x,y\in P_1,\) \[\tau(x\ast_1 y)=\tau(x)\ast_2 \tau(y).\]
In particular, two t-norms \(\ast_1:P_1^2\to P_1\) and \(\ast_2:P_2^2\to P_2\) are called to be isomorphic, if the two posemigroups \((P_1,\ast_1, \le_1)\) and \((P_2,\ast_2, \le_2)\) are isomorphic.

\begin{prop}\label{2} 
Suppose that  posemigroups \((P_1,\ast_1,\le_1)\) and  \((P_2,\ast_2,\le_2)\)  are isomorphic. Then  \(\ast_1\) is a t-norm on \((P_1,\le_1)\) if and only if \(\ast_2\) is a t-norm on \((P_2,\le_2)\).
\end{prop}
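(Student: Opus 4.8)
The plan is to exploit the defining property of an isomorphism of posemigroups: an order isomorphism \(\tau\colon P_1\to P_2\) satisfying \(\tau(x\ast_1 y)=\tau(x)\ast_2\tau(y)\) for all \(x,y\in P_1\). Since \(\tau^{-1}\) is again an order isomorphism, and applying \(\tau^{-1}\) to the multiplicativity identity shows that \(\tau^{-1}\) is multiplicative as well, the roles of \((P_1,\ast_1,\le_1)\) and \((P_2,\ast_2,\le_2)\) are interchangeable. Hence the biconditional is symmetric, and it suffices to prove a single implication: assuming \(\ast_1\) is a t-norm on \((P_1,\le_1)\), I would show that \(\ast_2\) is a t-norm on \((P_2,\le_2)\).

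Recall from the definition that both triples are already posemigroups, so associativity and monotonicity of \(\ast_2\) come for free; what remains to verify for the t-norm property is commutativity together with the fact that the greatest element \(1_{P_2}\) acts as the unit. For commutativity, given arbitrary \(u,v\in P_2\), I would use the surjectivity of \(\tau\) to write \(u=\tau(x)\) and \(v=\tau(y)\), and then compute \(u\ast_2 v=\tau(x)\ast_2\tau(y)=\tau(x\ast_1 y)=\tau(y\ast_1 x)=\tau(y)\ast_2\tau(x)=v\ast_2 u\), where the central equality invokes commutativity of \(\ast_1\).

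The one point deserving care—more the crux of the argument than a genuine obstacle—is the behaviour of \(\tau\) on the bounds. Because \(\tau\) is an order isomorphism between bounded posets, it carries the greatest element to the greatest element, so \(\tau(1_{P_1})=1_{P_2}\). Granting this, for any \(u=\tau(x)\in P_2\) I would compute \(1_{P_2}\ast_2 u=\tau(1_{P_1})\ast_2\tau(x)=\tau(1_{P_1}\ast_1 x)=\tau(x)=u\), and symmetrically \(u\ast_2 1_{P_2}=u\), using that \(1_{P_1}\) is the unit of \(\ast_1\). This shows that \(1_{P_2}\) is the unit of \(\ast_2\) and completes the verification that \(\ast_2\) is a t-norm; the reverse implication then follows verbatim by running the same argument with \(\tau^{-1}\) in place of \(\tau\).
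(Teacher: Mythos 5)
Your proof is correct; the paper itself states this proposition without proof, treating it as routine, and your argument is precisely the verification it leaves to the reader: associativity and monotonicity are part of the posemigroup hypothesis, commutativity and the unit law transport along \(\tau\), and \(\tau^{-1}\) gives the converse direction. The only wording to tighten is that boundedness of \((P_2,\le_2)\) is a conclusion rather than a hypothesis: from boundedness of \((P_1,\le_1)\), the images \(\tau(0_{P_1})\) and \(\tau(1_{P_1})\) are a least and a greatest element of \(P_2\), which is exactly what justifies your phrase that \(\tau\) ``carries the greatest element to the greatest element.''
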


\begin{prop}\label{..2} 
 Let \(\ast_1\) and \(\ast_2\) be two binary functions on bounded posets \((P_1,\le_1)\) and \((P_2,\le_2)\), respectively.   If  there exists an order-reversing isomorphism \( \tau : P_1\to P_2\) satisfying for all \(x,y\in P_1\), \[\tau (x\ast_1 y)=\tau(x)\ast_2 \tau(y),\] then \(\ast_1\) is  a t-norm on \((P_1,\le_1)\)   if and only if \(\ast_2\) is a t-conorm on  \((P_2,\le_2)\). 
\end{prop}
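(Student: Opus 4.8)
The plan is to reduce the statement to Proposition~\ref{2} by replacing the order on \(P_2\) with its opposite, which turns the order-reversing isomorphism \(\tau\) into an order-preserving one. As a first step I would record the effect of \(\tau\) on the bounds: being an order-reversing bijection between \((P_1,\le_1)\) and \((P_2,\le_2)\), it sends the top \(1_{P_1}\) to the bottom \(0_{P_2}\) and the bottom \(0_{P_1}\) to the top \(1_{P_2}\). This exchange of extremal elements is exactly what should carry the unit law of a t-norm (unit \(=\) top) to that of a t-conorm (unit \(=\) bottom).

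The central observation is that \(\ast_2\) is a t-conorm on \((P_2,\le_2)\) if and only if \(\ast_2\) is a t-norm on the opposite bounded poset \((P_2,\ge_2)\), whose bottom is \(1_{P_2}\) and whose top is \(0_{P_2}\). To see this I would check the three ingredients of a posemigroup separately. Commutativity and associativity of \(\ast_2\) refer only to the algebra, not to the order, so they are untouched. The monotonicity (isotonicity) axiom is self-dual: the implication ``\(y\le_2 z \Rightarrow x\ast_2 y\le_2 x\ast_2 z\)'' is logically identical to ``\(y\ge_2 z \Rightarrow x\ast_2 y\ge_2 x\ast_2 z\)'', so \((P_2,\ast_2,\le_2)\) is a posemigroup precisely when \((P_2,\ast_2,\ge_2)\) is. Finally, the unit \(0_{P_2}\) required for a t-conorm on \((P_2,\le_2)\) is the top element of \((P_2,\ge_2)\), hence the unit required for a t-norm there.

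With this reformulation, \(\tau\) viewed as a map \((P_1,\le_1)\to(P_2,\ge_2)\) becomes an order isomorphism: the defining equivalence \(x\le_1 y \iff \tau(x)\ge_2\tau(y)\) is precisely the statement that \(\tau\) preserves \(\ge_2\), and the intertwining identity \(\tau(x\ast_1 y)=\tau(x)\ast_2\tau(y)\) carries over verbatim. Since \(\tau\) is an order isomorphism intertwining the operations, the posemigroup axioms transfer in both directions, so \((P_1,\ast_1,\le_1)\) is a posemigroup exactly when \((P_2,\ast_2,\ge_2)\) is. If both are, they are isomorphic posemigroups and Proposition~\ref{2} gives that \(\ast_1\) is a t-norm on \((P_1,\le_1)\) if and only if \(\ast_2\) is a t-norm on \((P_2,\ge_2)\); if neither is, then neither \(\ast_1\) nor \(\ast_2\) can be a t-norm and the equivalence holds vacuously. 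Combined with the central observation of the previous paragraph, this is exactly the asserted equivalence between \(\ast_1\) being a t-norm and \(\ast_2\) being a t-conorm.

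I do not anticipate a genuine difficulty, since the whole argument is a formal transport of structure along a bijection. The single point that must be handled with care is the self-duality of the isotonicity axiom, i.e. that reversing the order on \(P_2\) leaves the monotonicity requirement intact; once this is noted, everything else reduces cleanly to the already-proved Proposition~\ref{2}, with no separate verification of associativity, commutativity, or the unit condition needed.
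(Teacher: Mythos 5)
Your proof is correct and complete. Note that the paper itself states Proposition~\ref{..2} in the preliminaries without any proof at all (just as with Proposition~\ref{2}), so there is no author's argument to compare against; your reduction is the natural way to fill this gap. The two pivots of your argument are both verified correctly: first, that the isotonicity axiom of a posemigroup is self-dual, so \((P_2,\ast_2,\le_2)\) is a posemigroup precisely when \((P_2,\ast_2,\ge_2)\) is; second, that a t-conorm on \((P_2,\le_2)\) is exactly a t-norm on the opposite bounded poset \((P_2,\ge_2)\), whose top element is \(0_{P_2}\). You were also right to be careful about the hypothesis of Proposition~\ref{2}, which presupposes that both structures are already posemigroups: transporting the posemigroup axioms along \(\tau\) first, and disposing of the case where neither is a posemigroup as vacuous, closes what would otherwise be a small logical gap in a naive invocation of that proposition. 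The only stylistic alternative would have been a direct verification of commutativity, associativity, monotonicity, and the unit law for \(\ast_2\) from those of \(\ast_1\) (which is what the author presumably regarded as routine); your route through the opposite poset packages the same computations more cleanly and reuses Proposition~\ref{2} as intended.
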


\begin{exmp}
The product t-norm \(T_p:[0,1]^2\to[0,1]\)  and  the {\L}uckasiewicz  t-norm \(T_{L}:[0,1]^2\to[0,1]\),  are respectively  given by \[T_p(x,y)=x \cdot y,\quad  T_{L}(x,y)=\max\{x+y-1,0\}.\]  
\end{exmp}

Suppose that \(0\le a<b\le 1\). A t-norm \(\ast:[a,b]^2\to [a,b]\) is called
\begin{itemize}
\item\emph{continuous}, if it is continuous as a two binary function.
\item \emph{border continuous}, if it is continuous on the boundary of the unit square \([a,b]^2\),  i.e. on the set of \([a,b]^2\setminus (a,b)^2\). 
\item \emph{left-continuous}, if it is left continuous in each component.  
\item  \emph{conditionally cancellative},  if  \(x_1\ast y= x_2\ast y> a\) implies  \(x_1=x_2\).  
\item \emph{Archimedean},  if for any \(x,y\in (a,b)\), there exists \(n\in \mathbb{N}\) such that \(x^{(n)}_\ast \le y\). 
\end{itemize}

Let \(\ast:[0,1]^2\to [0,1]\) be a binary map. For \(a,b\in[0,1]\),  define \[ a\ast b^-=\sup \{a\ast t\mid t< b\}\]  In particular, \(a\ast 0^-=\sup \varnothing=0.\)

 The following proposition is obtained from the monograph \cite{al06}.
\begin{prop}\label{p3}
Let \(\ast:[0,1]^2\to [0,1]\) be a t-norm. Then 

(i) \(\ast\) is continuous  if and only if  it is continuous in each component.

(ii) \(\ast\) is  border continuous if and only if  for all \(a\in [0,1]\), \(a\ast 1^-= a\ast 1\).

(iii)  \(\ast\) is left-continous implies that \(\ast\)  is  border continuous.
\end{prop}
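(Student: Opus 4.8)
The three parts rest on two structural features of a t-norm: monotonicity in each argument, and the fact that $1$ is the unit while $0$ is absorbing, so that $a \ast 0 = 0$ and $a \ast 1 = a$ for every $a$. The plan is to prove (i) first, extract from its argument the limit-exchange technique, and then obtain (ii) and (iii) as comparatively short consequences. Throughout I use that, by monotonicity, $a \ast 1^- = \sup_{t<1} a \ast t = \lim_{t \to 1^-} a \ast t$.

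For (i) the nontrivial direction is that continuity in each component forces joint continuity, the converse being immediate. The obstacle is the familiar one: separate continuity does not in general imply joint continuity, so monotonicity must be used decisively, and this index-chasing step is where I expect the main difficulty to lie. First I would reduce, via monotonicity, to showing that $\ast$ respects monotone convergence along both coordinates simultaneously. Concretely, for $x_n \uparrow x_0$ and $y_n \uparrow y_0$, I would note that for fixed $m$ and all $n \ge m$ one has $x_n \ast y_n \ge x_n \ast y_m$; letting $n \to \infty$ and using continuity in the first component gives $\lim_n x_n \ast y_n \ge x_0 \ast y_m$, and then letting $m \to \infty$ and using continuity in the second component (available by commutativity) gives $\lim_n x_n \ast y_n \ge x_0 \ast y_0$, while the reverse inequality is automatic by monotonicity. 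The same scheme handles $x_n \downarrow x_0$, $y_n \downarrow y_0$. Finally, an arbitrary pair of convergent sequences is squeezed between $(x_0 - \delta) \ast (y_0 - \delta)$ and $(x_0 + \delta) \ast (y_0 + \delta)$ (truncated to $[0,1]$), both of which tend to $x_0 \ast y_0$ by the monotone cases, yielding joint continuity.

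For (ii) I would first observe that border continuity is automatic along the edges $\{0\} \times [0,1]$ and $[0,1] \times \{0\}$, since $\ast(x,y) \le x \ast 1 = x$ and $\ast(x,y) \le 1 \ast y = y$ force $\ast(x,y) \to 0$ as either coordinate tends to $0$. Hence border continuity is equivalent to continuity at the top and right edges only. Because $\ast(a,1) = a$, continuity of $\ast$ at the point $(a,1)$, restricted to the line $x = a$, yields exactly $\lim_{t \to 1^-} a \ast t = a$, that is $a \ast 1^- = a \ast 1$, which settles the forward implication. For the converse, assuming $a \ast 1^- = a \ast 1$ for all $a$, I would reuse the monotone-squeeze idea from (i): near $(1, y_0)$ one bounds $\ast(x,y)$ above by $\ast(1,y) = y \to y_0$, and below by $\ast(1-\delta,\, y_0 - \epsilon) = \ast(y_0 - \epsilon,\, 1 - \delta)$, whose limit as $\delta \to 0$ is $(y_0 - \epsilon) \ast 1^- = y_0 - \epsilon$ by hypothesis; letting $\epsilon \to 0$ then gives continuity at $(1,y_0)$, and symmetrically at the top edge.

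For (iii) the argument is immediate and requires no new work once (ii) is in place: left-continuity in the second component means $\lim_{t \to 1^-} a \ast t = a \ast 1$ for every $a$, which is precisely the condition $a \ast 1^- = a \ast 1$ appearing in (ii), so applying (ii) yields border continuity.
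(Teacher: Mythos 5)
Your proof is correct, but there is nothing in the paper to compare it against: the paper does not prove Proposition \ref{p3} at all, it simply states that the result ``is obtained from the monograph'' of Alsina, Frank and Schweizer \cite{al06}. So what you have produced is a self-contained proof of a cited fact, and it is essentially the standard textbook argument. For (i) you use exactly the classical device for monotone binary operations: separate continuity plus monotonicity handles monotone sequences (the double limit exchange over $n$ and $m$ is valid because $x_n\ast y_n$ is itself monotone and bounded), and an arbitrary convergent sequence is then squeezed between $(x_0-\delta)\ast(y_0-\delta)$ and $(x_0+\delta)\ast(y_0+\delta)$, whose limits as $\delta\to 0$ are covered by the monotone cases. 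For (ii) your reduction is the right one: continuity at the left and bottom edges is automatic from $\ast(x,y)\le x\wedge y$, so border continuity is genuinely a condition on the top and right edges only, where joint continuity at $(1,y_0)$ is recovered from the one-sided condition $a\ast 1^-=a\ast 1$ by another monotone squeeze; this is precisely the known characterization. For (iii), observing that left continuity of $t\mapsto a\ast t$ at $t=1$ is literally the hypothesis of (ii) is all that is needed. The one point worth making explicit in a final write-up is the reading of ``continuous on the boundary'': it must mean joint continuity of $\ast$ at each boundary point of $[0,1]^2$, not continuity of the restriction of $\ast$ to the boundary (the latter holds trivially for every t-norm, since the restriction is built from the constant $0$ and the identity, agreeing at the corners). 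Your argument implicitly and correctly uses the former, which is the intended standard meaning; stating it removes the only possible ambiguity.
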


Let  \(L\) be a complete lattice. An \emph{\(L\)-fuzzy set} \(A\) is determined  by a  membership function \[\mu_A:  X\to L\]  where \(\mu_A(x)\) denote the membership degree of \(x\in X\) to \(A\) (see \cite{go67}). 

In particular, type-1 fuzzy sets are exactly  \([0,1]\)-fuzzy sets.  Interval-valued fuzzy sets are exactly  \(\mathbf{I}^{[2]}\)-fuzzy sets, where  the complete lattice  \(\mathbf{I}^{[2]}\) denotes the set of all closed subintervals of \([0,1]\) equipped with the order \(\le_I\) defined by \([a_1,b_1]\le_I  [a_2, b_2]\), if  \(a_1\le b_1\) and \(a_2 \le b_2\).

\subsection{Type-2 fuzzy set and convolution }
A \emph{type-2 fuzzy set} \(A\) is determined  by a  membership function \[\mu_A:  X\to \mathbf{M}=[0,1]^{[0,1]}=\text{Map}([0,1],[0,1]).\]  

 A standard method to construct  operations on \(\mathbf{M}\)  is the convolution.
\begin{defn}\label{de} \label{df} Let   \(\ast:[0,1]^n\to[0,1]\) and \(\va:[0,1]^n\to[0,1]\) be two \(n\)-ary  operations on \([0,1]\). The \emph{convolution} induced by  \(\ast\) and  \(\va\) is an \(n\)-ary operation \(\ast_\va:\mathbf{M}^n\to \mathbf{M}\)  given by \[\ast_\va(f_1,\cdots,f_n)(x)=\bigvee_{\ast(y_1, \cdots, y_n)=x} \va(f(y_1),\cdots, f(y_n)).\label{1.1}\] In particular,  if there not exist \((y_1, y_2, \cdots, y_n)\in [0,1]^n\)
 such that \(\ast(y_1, \cdots, y_n)=x\),  let \[\ast_\va(f_1,\cdots,f_n)(x)=   \bigvee \varnothing= 0.\] \end{defn}

Let \(\ast\) be an \(n\)-ary operation  on \([0,1]\). The Zadeh extension of \(\ast\)  is exactly the convolution \(\ast_\wedge:\mathbf{M}^n\to \mathbf{M}\). 
The \(\va\)-extension of  \(\ast\) is exactly the  convolution \(\ast_\va:\mathbf{M}^n\to \mathbf{M}\), where \(\va\) is usually a t-norm on \([0,1]\). The \(\va\)-extension is also called  generalized  extension.

 For each  \(\alpha\in [0,1]\), the \(\alpha\)-cut and  strong \(\alpha\)-cut of  \(f\in \mathbf{M}\) are respectively given by  
\[f^{\alpha}=\{x\in[0,1]\mid f(x)\ge \alpha\},\]
\[f^{\hat{\alpha}}=\{x\in[0,1]\mid f(x)> \alpha\}.\]

If \(\ast\) be a binary function on \([0,1]\),  for any \(A,B\subseteq  [0,1]\),  let \(A \ast B\) denote the  set \[\{x\ast y\mid x\in A, y\in B\}.\]

Under the case that  \(\ast:[0,1]^2\to [0,1]\) is  surjective and  and \(\va:[0,1]^2\to [0,1]\) is a t-norm,   the convolution \(\ast_\va\) can be represented through  \(\alpha\)-cut, see \cite[Theorem 3.1 (2)]{zh25}. In the following, we give a more general result.

\begin{prop}\label{.14} 
If \(\ast\)  and \(\va\) are two binary functions on \([0,1]\) and  \(\va\) is  monotone  increasing in each place, then for any \( f,g\in \mathbf{M}\) and \(\alpha\in [0,1]\), \[ (f\ast_\va g)^{\hat{\alpha}}=\bigcup_{\alpha_1\va \alpha_2>\alpha} f^{\alpha_1}\ast g^{\alpha_2}.\]
\end{prop}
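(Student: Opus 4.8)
The plan is to prove the set equality by double inclusion, unwinding each side to a statement about the existence of a witnessing pair \((y_1,y_2)\). The elementary fact I will lean on throughout is that for any set \(S\subseteq[0,1]\) one has \(\bigvee S>\alpha\) if and only if \(S\) contains some element exceeding \(\alpha\): were every element of \(S\) at most \(\alpha\), then \(\alpha\) would bound \(S\) from above and force \(\bigvee S\le\alpha\); conversely any element above \(\alpha\) pushes the supremum above \(\alpha\).

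First I would rewrite the left-hand membership. By the definition of the strong \(\alpha\)-cut, \(x\in(f\ast_\va g)^{\hat\alpha}\) reads \((f\ast_\va g)(x)>\alpha\), and by Definition \ref{df} the value \((f\ast_\va g)(x)\) equals \(\bigvee_{\ast(y_1,y_2)=x}\va(f(y_1),g(y_2))\), with the convention that this supremum is \(0\) when the preimage is empty. Applying the supremum fact, \(x\in(f\ast_\va g)^{\hat\alpha}\) holds exactly when there is a pair \((y_1,y_2)\) with \(\ast(y_1,y_2)=x\) and \(\va(f(y_1),g(y_2))>\alpha\). On the other side, \(x\in\bigcup_{\alpha_1\va\alpha_2>\alpha}f^{\alpha_1}\ast g^{\alpha_2}\) unwinds, via the definitions of the ordinary \(\alpha\)-cut and of the setwise product \(A\ast B\), to the existence of scalars \(\alpha_1,\alpha_2\) and a pair \((y_1,y_2)\) with \(\alpha_1\va\alpha_2>\alpha\), \(f(y_1)\ge\alpha_1\), \(g(y_2)\ge\alpha_2\), and \(y_1\ast y_2=x\).

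For the inclusion \(\subseteq\), given a witnessing pair for the left-hand side I would simply take \(\alpha_1:=f(y_1)\) and \(\alpha_2:=g(y_2)\); then \(\alpha_1\va\alpha_2=\va(f(y_1),g(y_2))>\alpha\) while trivially \(f(y_1)\ge\alpha_1\) and \(g(y_2)\ge\alpha_2\), so the right-hand condition is met. This direction requires no hypothesis on \(\va\). The reverse inclusion \(\supseteq\) is where the monotonicity of \(\va\) enters, and it is the only genuine step: from \(f(y_1)\ge\alpha_1\) and \(g(y_2)\ge\alpha_2\) together with the assumption that \(\va\) is increasing in each place, I obtain \(\va(f(y_1),g(y_2))\ge\va(\alpha_1,\alpha_2)=\alpha_1\va\alpha_2>\alpha\), which is exactly the witness needed to conclude \(x\in(f\ast_\va g)^{\hat\alpha}\).

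The argument is essentially bookkeeping, so the main obstacle is conceptual rather than technical: one must recognize that the \emph{strict} inequality defining the strong cut pairs naturally with the \emph{non-strict} ordinary cuts \(f^{\alpha_1},g^{\alpha_2}\) precisely because the strictness is carried by the single scalar condition \(\alpha_1\va\alpha_2>\alpha\) and not by membership in the cuts themselves. I would also note that the degenerate case in which no pair maps to \(x\) under \(\ast\) is handled uniformly, since then the left value is \(\bigvee\varnothing=0\not>\alpha\) while the right-hand union contains no such \(x\) either, so no separate case analysis is needed.
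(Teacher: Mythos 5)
Your proposal is correct and follows essentially the same route as the paper's own proof: a double inclusion where the inclusion \((f\ast_\va g)^{\hat{\alpha}}\subseteq\bigcup_{\alpha_1\va\alpha_2>\alpha}f^{\alpha_1}\ast g^{\alpha_2}\) is witnessed by choosing \(\alpha_1=f(y_1)\), \(\alpha_2=g(y_2)\), and the reverse inclusion is the only place the monotonicity of \(\va\) is invoked, via \(\va(f(y_1),g(y_2))\ge\alpha_1\va\alpha_2>\alpha\). Your explicit remarks on the supremum characterization and the empty-preimage case are sound additions but do not change the substance of the argument.
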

\begin{proof}
Firstly, we show that  \( (f\ast_\va g)^{\hat{\alpha}}\subseteq \bigcup_{\alpha_1\va \alpha_2>\alpha} f^{\alpha_1}\ast g^{\alpha_2}.\)  For any \(x_0\in  (f\ast_\va g)^{\hat{\alpha}}\), since \((f\ast_\va g)(x_0)>\alpha\), i.e.   \[(f\ast_\va g)(x_0)= \bigvee_{y\ast z=x_0}f(y)\va g(z)>\alpha\]  then there exist \(y_0,z_0\in [0,1]\) such that 
\(y_0\ast z_0=x\) and  \(f(y_0)\va g(z_0)>\alpha.\)  Put \(\alpha_1=f(y_0)\) and  \(\alpha_2=g(z_0),\) then  \[x=y_0\ast z_0\in  f^{\alpha_1} \ast  g^{\alpha_2}\subseteq  \bigcup_{\alpha_1\va \alpha_2>\alpha} f^{\alpha_1}\ast g^{\alpha_2}.\]

Conversely,  for  any  \(x\in  \bigcup_{\alpha_1\va \alpha_2>\alpha} f^{\alpha_1}\ast g^{\alpha_2},\)  there exist \(\alpha_1, \alpha_2\in[0,1]\) such that  \(\alpha_1\va \alpha_2>\alpha\) and  \(x\in f^{\alpha_1}\ast g^{\alpha_2}.\)  Due to \(x\in f^{\alpha_1}\ast g^{\alpha_2}\),   there exist \(y_0,z_0\in [0,1]\) such that \(x=y_0\ast z_0\), \(f(y_0)\ge \alpha_1\) and \(g(z_0)\ge \alpha_2.\) Therefore, \[(f\ast_\va g)(x)=\bigvee_{y\ast z=x}f(y)\va g(z)\ge  f(y_0)\va g(z_0)\ge \alpha_1 \va \alpha_2>\alpha,\] thus,   \(x\in  (f\ast_\va g)^{\hat{\alpha}}.\)
\end{proof}

\begin{prop}\label{..5} For \(f,g\in \mathbf{M},\) the  following statements are equivalent.
\begin{itemize}
 \item[(i)]\(f=g.\)
 \item[(ii)]  \(f^{\alpha}=g^{\alpha}\) for all \(\alpha\in(0,1].\)
 \item[(iii)] \(f^{\hat{\alpha}}=g^{\hat{\alpha}}\) for all \(\alpha\in[0,1).\)
 \end{itemize}
\end{prop}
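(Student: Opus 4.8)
The plan is to prove the cycle of implications by exploiting the elementary fact that a function in $\mathbf{M}$ is completely determined, pointwise, by either of its cut families. The two implications $(i)\Rightarrow(ii)$ and $(i)\Rightarrow(iii)$ are immediate: if $f=g$ as functions, then the sets $\{x\mid f(x)\ge\alpha\}$ and $\{x\mid g(x)\ge\alpha\}$ coincide for every $\alpha$, and likewise for the strong cuts. So the entire content lies in the two converse directions $(ii)\Rightarrow(i)$ and $(iii)\Rightarrow(i)$, each of which I would handle by a short contrapositive argument.

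For $(ii)\Rightarrow(i)$, the observation I would record first is the membership equivalence $x\in f^{\alpha}\iff f(x)\ge\alpha$, which is just the definition of the $\alpha$-cut. It says that the value of $f$ at a point is recovered from its cuts via $f(x)=\sup\{\alpha\in(0,1]\mid x\in f^{\alpha}\}$, with the convention $\sup\varnothing=0$ covering the case $f(x)=0$. Rather than invoke this formula, I find it cleaner to argue by contradiction: suppose $f(x)\ne g(x)$, say $f(x)>g(x)$. Then $\alpha:=f(x)$ lies in $(0,1]$ because $f(x)>g(x)\ge 0$, and it witnesses $x\in f^{\alpha}\setminus g^{\alpha}$, contradicting $(ii)$. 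Hence $f=g$.

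The implication $(iii)\Rightarrow(i)$ runs along the same lines with the dual bookkeeping. Here the relevant equivalence is $x\in f^{\hat{\alpha}}\iff f(x)>\alpha$, so that $f(x)=\sup\{\alpha\in[0,1)\mid x\in f^{\hat{\alpha}}\}$. Contrapositively, if $f(x)>g(x)$ then $\alpha:=g(x)$ lies in $[0,1)$ because $g(x)<f(x)\le 1$ forces $g(x)<1$, and it witnesses $x\in f^{\hat{\alpha}}\setminus g^{\hat{\alpha}}$, again contradicting $(iii)$. This closes the equivalence.

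There is no serious obstacle here; the only points requiring care are the endpoint conventions, and these are exactly what dictate the index ranges in the statement. The ordinary cuts are indexed by $(0,1]$ rather than $[0,1]$ because $f^{0}=[0,1]$ for every $f$ and so carries no information, while the strong cuts are indexed by $[0,1)$ because $f^{\hat{1}}=\varnothing$ for every $f$; it is precisely these degenerate endpoints that must be excluded for the reconstruction to return $f(x)$. Keeping track of the convention $\sup\varnothing=0$ in the borderline value $f(x)=0$ is the one spot where a routine check is genuinely needed.
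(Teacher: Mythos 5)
Your proof is correct and complete: the forward implications are trivial, and your contrapositive arguments for $(ii)\Rightarrow(i)$ (witness $\alpha=f(x)\in(0,1]$) and $(iii)\Rightarrow(i)$ (witness $\alpha=g(x)\in[0,1)$) are exactly the routine checks needed, with the endpoint bookkeeping handled properly. Note that the paper states this proposition without any proof, treating it as an elementary standard fact about cut representations, so there is no argument of the author's to compare against; your verification simply supplies the omitted routine details.
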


There exist two  important partial orders \(\sqsubseteq\) and \(\preceq\) on \(\mathbf{M}\)  induced by  the convolutions    \(\wedge_{\wedge}\) (extended minimum) and   \(\vee_{\wedge}\)(extended maximum), respectively. 

\begin{defn} The  partial orders \(\sq\) and \(\pp\) on \(\mathbf{M}\) defined  by \[f\sqsubseteq g, \text{ if  } f\wedge_{\wedge} g=f,\]
 \[f\preceq g, \text{ if  } f\vee_{\wedge} g=g,\]
are called  to be \emph{convolution orders}.
\end{defn}

 It is known that \((\mathbf{M}, \sqsubseteq)\) is an inf semilattice,  \((\mathbf{M},\pp)\) is  a sup semilattice, and   the convolution orders \(\sq\) and \(\pp\)  are distinct on \(\mathbf{M}\) (see \cite{wa05}).

For \(A\subseteq[0,1]\), let \(\bar{A}:[0,1]\to[0,1]\) denote the \emph{characteristic function} of \(A\), that is \[\bar{A}(x)=\begin{cases}1,\quad  \text{if } x\in A,\\ 0, \quad \text{otherwise}.\end{cases}\] In particular,  for \(a\in[0,1]\), let \(\bar{a}\) denote the characteristic function of the  singleton \(\{a\}\).

\begin{prop}\label{77}\cite{wa05} For any \(f\in \mathbf{M}\),  \(\bar{0}\preceq f\) and \(f \sqsubseteq \bar{1}\).
\end{prop}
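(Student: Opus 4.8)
The plan is to verify both relations by unwinding the definitions of the convolution orders and exploiting that $\bar{0}$ and $\bar{1}$ each take the value $1$ at a single point and $0$ everywhere else. Recall that $\bar{0}\preceq f$ means $\bar{0}\vee_\wedge f=f$, and $f\sqsubseteq\bar{1}$ means $f\wedge_\wedge\bar{1}=f$; so in each case I only need to compute the relevant convolution pointwise and check that it returns $f$.

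For the first relation I would fix $x\in[0,1]$ and write
\[(\bar{0}\vee_\wedge f)(x)=\bigvee_{y\vee z=x}\bar{0}(y)\wedge f(z).\]
Since $\bar{0}(y)=0$ for every $y\neq 0$, all such terms vanish and only $y=0$ can contribute. As $0\vee z=z$, the constraint $y\vee z=x$ together with $y=0$ forces $z=x$, and the corresponding term is $\bar{0}(0)\wedge f(x)=1\wedge f(x)=f(x)$. Hence the supremum equals $f(x)$, giving $\bar{0}\vee_\wedge f=f$.

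For the second relation the argument is dual. Fixing $x$, I would write
\[(f\wedge_\wedge\bar{1})(x)=\bigvee_{y\wedge z=x}f(y)\wedge\bar{1}(z),\]
and note that $\bar{1}(z)=0$ unless $z=1$. Since $y\wedge 1=y$, the constraint $y\wedge z=x$ with $z=1$ forces $y=x$, so the surviving term is $f(x)\wedge 1=f(x)$, and the supremum is again $f(x)$; thus $f\wedge_\wedge\bar{1}=f$.

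There is essentially no obstacle in this proof: the only point to check is that the single nonvanishing term actually realizes the supremum and is not surpassed by some other admissible pair, which is immediate because every other term equals $0$. The whole argument rests on two facts—that $0$ is neutral for $\vee$ and $1$ is neutral for $\wedge$, together with the one-point support of the characteristic functions—so no deeper structural input about $\mathbf{M}$ is needed.
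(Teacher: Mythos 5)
Your proof is correct: both computations unwind the convolution definitions properly, and the key observations (only $y=0$ contributes to $\bar{0}\vee_\wedge f$, forcing $z=x$; only $z=1$ contributes to $f\wedge_\wedge\bar{1}$, forcing $y=x$) are exactly what is needed. The paper itself gives no proof of this proposition, citing it to Walker and Walker, so your direct pointwise verification simply supplies the standard argument that the citation leaves implicit.
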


Unfortunately, \((\mathbf{M},\sq,\bar{0}, \bar{1})\) is not a bounded poset  since \(\bar{0}\) is not the minimum, and \((\mathbf{M},\pp,\bar{0}, \bar{1})\) is not a bounded poset since \(\bar{1}\) is not the maximum.  

Note that some significant results of the  subposets of  \((\mathbf{M},\sq)\) (or \((\mathbf{M},\pp)\)) are obtained by Harding, Walker and Walker (see \cite{ha08,ha10,wa05}). In the following, we recall these  subposets.

An element \(f\) of  \(\mathbf{M}\) is called \emph{normal}, if \[\sup \{f(x)\mid x\in [0,1]\}=1.\]
Let \(\mathbf{N}\) denote the set of all normal elements of \(\mathbf{M}\).

\begin{prop}\label{88}\cite{he14} For each \(f\in \mathbf{M}\),   \(\bar{0}\sqsubseteq  f \Leftrightarrow f \preceq\bar{1}\Leftrightarrow f\in \mathbf{N}\).
\end{prop}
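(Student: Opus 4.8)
The plan is to prove the full chain by showing that each of the two outer conditions, $\bar{0}\sqsubseteq f$ and $f\preceq\bar{1}$, is separately equivalent to the middle condition $f\in\mathbf{N}$, i.e. to $\sup\{f(x)\mid x\in[0,1]\}=1$. Both reductions are direct pointwise computations with the convolution of Definition \ref{df}, so I do not anticipate any deep obstacle; the only thing requiring care is the case analysis forced by the $\min$/$\max$ constraint in the join index, which collapses each convolution's support to a single point and identifies its value there with $\sup f$.

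First I would establish $\bar{0}\sqsubseteq f\Leftrightarrow f\in\mathbf{N}$ by evaluating $\bar{0}\wedge_{\wedge}f$ (the extended minimum, $\ast=\wedge$, $\va=\wedge$). By definition $(\bar{0}\wedge_{\wedge}f)(x)=\bigvee_{\min(y,z)=x}\min(\bar{0}(y),f(z))$. Since $\bar{0}(y)=0$ for $y\neq 0$, only terms with $y=0$ can be nonzero, and for such terms $\min(0,z)=0$ forces $x=0$; hence $(\bar{0}\wedge_{\wedge}f)(x)=0=\bar{0}(x)$ for every $x\neq 0$. At $x=0$, taking $y=0$ (and any $z$) gives $\min(\bar{0}(0),f(z))=\min(1,f(z))=f(z)$, so $(\bar{0}\wedge_{\wedge}f)(0)=\sup\{f(z)\mid z\in[0,1]\}$. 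Comparing with $\bar{0}(0)=1$, we get $\bar{0}\wedge_{\wedge}f=\bar{0}$ if and only if $\sup\{f(z)\mid z\in[0,1]\}=1$, which is precisely $f\in\mathbf{N}$.

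Next I would establish $f\preceq\bar{1}\Leftrightarrow f\in\mathbf{N}$ by the symmetric computation of $f\vee_{\wedge}\bar{1}$ (the extended maximum, $\ast=\vee$, $\va=\wedge$). Here $(f\vee_{\wedge}\bar{1})(x)=\bigvee_{\max(y,z)=x}\min(f(y),\bar{1}(z))$, and since $\bar{1}(z)=0$ for $z\neq 1$, only terms with $z=1$ survive; then $\max(y,1)=1$ forces $x=1$, so $(f\vee_{\wedge}\bar{1})(x)=0=\bar{1}(x)$ for $x\neq 1$. At $x=1$ the surviving terms give $\min(f(y),1)=f(y)$, whence $(f\vee_{\wedge}\bar{1})(1)=\sup\{f(y)\mid y\in[0,1]\}$, and $f\vee_{\wedge}\bar{1}=\bar{1}$ iff this supremum equals $\bar{1}(1)=1$, i.e. iff $f\in\mathbf{N}$. (This step can alternatively be obtained from the first by the reflection $x\mapsto 1-x$, which interchanges $\wedge_{\wedge}$ and $\vee_{\wedge}$, sends $\bar{0}$ to $\bar{1}$, and preserves normality.) Chaining the two equivalences yields $\bar{0}\sqsubseteq f\Leftrightarrow f\in\mathbf{N}\Leftrightarrow f\preceq\bar{1}$, as claimed.
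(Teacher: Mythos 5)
Your proof is correct. Note that the paper itself gives no argument for this proposition --- it is stated as a citation to \cite{he14} --- so there is no internal proof to compare against; your computation fills that gap in a self-contained way. The two pointwise evaluations are exactly right: for $\bar{0}\wedge_{\wedge}f$ the only index pairs $(y,z)$ with $\min(y,z)=x\neq 0$ have $y\neq 0$, killing every term, while at $x=0$ the pairs $(0,z)$ contribute $\min(1,f(z))=f(z)$, so $(\bar{0}\wedge_{\wedge}f)(0)=\sup f$ and the condition $\bar{0}\wedge_{\wedge}f=\bar{0}$ reduces to $\sup f=1$; the computation of $f\vee_{\wedge}\bar{1}$ is the exact mirror image, and both equivalences chain to give the statement. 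You also correctly respect the asymmetry of the two order definitions ($f\sqsubseteq g$ means $f\wedge_{\wedge}g=f$, while $f\preceq g$ means $f\vee_{\wedge}g=g$), which is the one place a careless argument could go wrong. Your parenthetical remark that the second equivalence follows from the first via the reflection $x\mapsto 1-x$ is also sound, and is in fact the mechanism the paper later formalizes as the order-reversing isomorphism $\neg$ in Proposition \ref{..24}.
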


The conditions, under which the subset of \(\mathbf{M}\) is a bounded poset, can be  obtained from Proposition \ref{77} and Proposition \ref{88} immediately.

\begin{prop}
Let \(\{\bar{0},\bar{1}\} \subseteq P \subseteq \mathbf{M}\).  Then \((P,\sq,\bar{0},\bar{1})\) (or \((P,\pp,\bar{0},\bar{1})\))  is a bounded poset  if and only if \(P\subseteq \mathbf{N}\).
\end{prop}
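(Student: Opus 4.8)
The plan is to reduce the claim to the definition of a bounded poset and then apply the two characterizations already established. Recall that \((P,\sq,\bar 0,\bar 1)\) is a bounded poset precisely when \(\bar 0\) is the least element and \(\bar 1\) the greatest element of \(P\) under \(\sq\); likewise for \(\pp\). So I would split the argument according to the two orders and, for each, check separately the top condition and the bottom condition, observing that for each order exactly one of these is automatic and the other is equivalent to \(P\subseteq\mathbf N\).

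First I would treat \(\sq\). By Proposition \ref{77} we have \(f\sq\bar 1\) for every \(f\in\mathbf M\), hence for every \(f\in P\); thus \(\bar 1\) is the greatest element with no further hypothesis on \(P\). The binding requirement is therefore that \(\bar 0\) be the least element, i.e. \(\bar 0\sq f\) for all \(f\in P\). Here I would invoke Proposition \ref{88}, which gives \(\bar 0\sq f\Leftrightarrow f\in\mathbf N\); consequently \(\bar 0\) is the minimum of \((P,\sq)\) iff \(P\subseteq\mathbf N\). This disposes of the \(\sq\)-statement.

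The \(\pp\)-statement I would handle dually. Proposition \ref{77} supplies \(\bar 0\pp f\) for all \(f\in\mathbf M\), so \(\bar 0\) is automatically the least element of \((P,\pp)\), and now the top condition is the binding one: \(f\pp\bar 1\) for all \(f\in P\). By Proposition \ref{88} this reads \(f\in\mathbf N\) for all \(f\in P\), i.e. \(P\subseteq\mathbf N\). To close the argument I would note that \(\bar 0\) and \(\bar 1\) are themselves normal (each attains the value \(1\)), so the standing hypothesis \(\{\bar 0,\bar 1\}\subseteq P\) is consistent with \(P\subseteq\mathbf N\) and ensures that the two distinguished bounds genuinely belong to \(P\).

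I do not expect any real obstacle: the proposition is an immediate corollary of Propositions \ref{77} and \ref{88}. The only thing demanding attention is the bookkeeping, namely remembering that the roles of \(\bar 0\) and \(\bar 1\) interchange between the two orders, so that the automatic extremal condition for \(\sq\) becomes the binding one for \(\pp\) and vice versa.
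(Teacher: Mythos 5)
Your proof is correct and takes essentially the same route as the paper, which derives the proposition immediately from Propositions \ref{77} and \ref{88}: the condition \(f\sq\bar{1}\) (resp. \(\bar{0}\pp f\)) is automatic by Proposition \ref{77}, while the remaining bound condition \(\bar{0}\sq f\) (resp. \(f\pp\bar{1}\)) is equivalent to \(f\in\mathbf{N}\) by Proposition \ref{88}, giving the equivalence with \(P\subseteq\mathbf{N}\). Nothing is missing; your bookkeeping about which extremal condition is automatic for each order matches the paper's intent exactly.
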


An element \(f\) of  \(\mathbf{M}\) is called \emph{convex}, if  \[x\le y\le z \Rightarrow  f(x)\wedge f(z) \le f(y).\]  Let \(\mathbf{C}\) denote the set of all convex elements of \(\mathbf{M}.\)   

For \(f\in \mathbf{M}\), let \(f^L,f^R\in \mathbf{M}\) be given by
 \[f^L(x)=\sup\{f(t)\mid  t\le x\},\quad f^R(x)=\sup\{f(t)\mid t\ge x\}.\]

 There exist some equivalent statements of the concept  \textit{convex}.  (see e.g. \cite{wa05}).
 
 \begin{prop}\label{10}For \(f\in \mathbf{M}\), the following statements are equivalent.
\begin{itemize}
\item[(i)] \(f\) is convex.
\item[(ii)] \(f=f^L\wedge f^R\).
\item[(iii)]  \(f^\alpha\) is a convex set for each \(\alpha\in (0,1]\).
\item[(iv)]  \(f^{\hat{\alpha}}\) is a convex set for each \(\alpha\in [0,1)\).
 \item[(v)] \(f\) is the minimum of a monotone increasing function and a monotone decreasing one.
\end{itemize}
\end{prop}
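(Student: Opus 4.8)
The plan is to prove the five statements equivalent through a short cycle $(i) \Rightarrow (ii) \Rightarrow (v) \Rightarrow (i)$ handling the descriptions of $f$ as a function, together with the two cut-wise equivalences $(i) \Leftrightarrow (iii)$ and $(i) \Leftrightarrow (iv)$, all routed through the defining inequality of convexity. Before starting I would record the elementary facts that $f^L$ is monotone increasing, $f^R$ is monotone decreasing, and that $f \le f^L$ and $f \le f^R$ pointwise (each follows by taking $t = x$ in the defining suprema), so that $f \le f^L \wedge f^R$ always holds. This reduces $(ii)$ to the single reverse inequality $f(x) \ge f^L(x) \wedge f^R(x)$.

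For the functional cycle: to obtain $(i) \Rightarrow (ii)$ I would fix $x$ and any $\gamma < f^L(x) \wedge f^R(x)$; since $\gamma < f^L(x)$ there is $t_1 \le x$ with $f(t_1) > \gamma$, and since $\gamma < f^R(x)$ there is $t_2 \ge x$ with $f(t_2) > \gamma$. Then $t_1 \le x \le t_2$, and convexity yields $f(x) \ge f(t_1) \wedge f(t_2) > \gamma$; letting $\gamma$ increase to $f^L(x) \wedge f^R(x)$ gives the reverse inequality and hence equality. The step $(ii) \Rightarrow (v)$ is immediate from the monotonicity of $f^L$ and $f^R$ noted above. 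For $(v) \Rightarrow (i)$, write $f = g \wedge h$ with $g$ increasing and $h$ decreasing; for $x \le y \le z$ one has $g(y) \ge g(x) \ge f(x)$ and $h(y) \ge h(z) \ge f(z)$, so $f(y) = g(y) \wedge h(y) \ge f(x) \wedge f(z)$, which is exactly convexity.

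For the cut equivalences I would use that a subset of $[0,1]$ is convex precisely when it is an interval, i.e.\ closed under passing to intermediate points. For $(i) \Rightarrow (iii)$, if $x, z \in f^\alpha$ and $x \le y \le z$ then $f(x) \wedge f(z) \ge \alpha$, so convexity forces $f(y) \ge \alpha$ and $y \in f^\alpha$; the strong-cut version $(i) \Rightarrow (iv)$ is identical with $>$ in place of $\ge$. Conversely, for $(iii) \Rightarrow (i)$, given $x \le y \le z$ put $\alpha = f(x) \wedge f(z)$: if $\alpha = 0$ there is nothing to prove, and if $\alpha \in (0,1]$ then $x, z \in f^\alpha$, so convexity of $f^\alpha$ gives $y \in f^\alpha$, i.e.\ $f(y) \ge \alpha$. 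For $(iv) \Rightarrow (i)$ I would argue by contradiction: if $f(y) < f(x) \wedge f(z)$, set $\alpha = f(y) \in [0,1)$; then $x, z \in f^{\hat{\alpha}}$ but $y \notin f^{\hat{\alpha}}$, contradicting convexity of $f^{\hat{\alpha}}$.

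The only genuinely delicate point is the passage $(i) \Rightarrow (ii)$: because the suprema defining $f^L$ and $f^R$ need not be attained, one cannot simply select maximizing points $t_1, t_2$, and must instead run the approximation over all $\gamma$ strictly below $f^L(x) \wedge f^R(x)$. Everything else is routine transfer between the defining inequality of convexity and the interval property of the cuts, so I expect no further obstacles.
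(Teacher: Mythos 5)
Your proof is correct, but there is nothing in the paper to compare it against: the paper does not prove Proposition \ref{10} at all, recalling it instead as a known characterization from Walker and Walker \cite{wa05}. Your argument — the functional cycle $(i)\Rightarrow(ii)\Rightarrow(v)\Rightarrow(i)$ together with the two cut equivalences $(i)\Leftrightarrow(iii)$ and $(i)\Leftrightarrow(iv)$ — is a complete, self-contained, and elementary verification of that cited result. The one delicate step, $(i)\Rightarrow(ii)$, is handled properly: since the suprema defining $f^L$ and $f^R$ need not be attained, approximating with arbitrary $\gamma < f^L(x)\wedge f^R(x)$ and invoking convexity at witnesses $t_1\le x\le t_2$ is exactly the right move, and the degenerate case $f^L(x)\wedge f^R(x)=0$ is covered trivially because $f\ge 0$. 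The remaining implications (monotonicity of $f^L$ and $f^R$ for $(ii)\Rightarrow(v)$, the pointwise estimate for $(v)\Rightarrow(i)$, the transfer between the convexity inequality and the interval property of cuts, and the contradiction argument with $\alpha=f(y)$ for $(iv)\Rightarrow(i)$) are all sound, so the proposal would serve as a valid proof of the proposition the paper merely quotes.
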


 Let \(\mathbf{L}\) denote the set of all normal  convex elements of \(\mathbf{M}\).  Then the convolution orders \(\sqsubseteq\)  and \(\preceq\) coincide on \(\mathbf{L}\) (see \cite[Proposition 19, Proposition 37]{wa05}), and the bounded poset \((\mathbf{L},\sqsubseteq,\bar{0},\bar{1})\) is a completely distributive lattice (see \cite{ha08}). 
 
 \begin{prop}\label{5}\cite{ha08}
 For any \(f,g\in \mathbf{L}\), \(f\sq g\) if and only if \( g^L\le f^L\) and \( f^R\le g^R\).
 \end{prop}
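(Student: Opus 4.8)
The plan is to reduce the relation $f\sq g$ to a comparison of the strong $\alpha$-cuts of $f$ and $g$, and then to translate that comparison into the stated conditions on the envelopes $f^L,g^L,f^R,g^R$. Since $f\sq g$ means $f\wedge_{\wedge}g=f$ and $\wedge_{\wedge}$ is the convolution $\ast_\va$ with $\ast=\va=\wedge$ (both the minimum on $[0,1]$), I would first apply Proposition \ref{.14}. Because $\alpha_1\wedge\alpha_2>\alpha$ is the same as $\alpha_1>\alpha$ and $\alpha_2>\alpha$, and the strong cuts are nested, the union collapses to
\[ (f\wedge_{\wedge}g)^{\hat{\alpha}}=f^{\hat{\alpha}}\wedge g^{\hat{\alpha}}=\{\,y\wedge z\mid y\in f^{\hat{\alpha}},\ z\in g^{\hat{\alpha}}\,\},\qquad \alpha\in[0,1). \]

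Next, since $f,g\in\mathbf{L}$, Proposition \ref{10}(iv) guarantees that each $f^{\hat{\alpha}}$ and $g^{\hat{\alpha}}$ is a convex subset of $[0,1]$, i.e. an interval, nonempty for $\alpha<1$ by normality. Writing $l_f(\alpha)=\inf f^{\hat{\alpha}}$ and $r_f(\alpha)=\sup f^{\hat{\alpha}}$ (and similarly for $g$), a short computation shows that the elementwise minimum of two intervals is again an interval with
\[ \inf\bigl(f^{\hat{\alpha}}\wedge g^{\hat{\alpha}}\bigr)=\min\{l_f(\alpha),l_g(\alpha)\},\qquad \sup\bigl(f^{\hat{\alpha}}\wedge g^{\hat{\alpha}}\bigr)=\min\{r_f(\alpha),r_g(\alpha)\}. \]
Invoking Proposition \ref{..5}, the identity $f\wedge_{\wedge}g=f$ is equivalent to $(f\wedge_{\wedge}g)^{\hat{\alpha}}=f^{\hat{\alpha}}$ for every $\alpha\in[0,1)$; matching infima and suprema then yields precisely $l_f(\alpha)\le l_g(\alpha)$ and $r_f(\alpha)\le r_g(\alpha)$ for all such $\alpha$.

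It remains to identify these endpoint inequalities with the envelope conditions. I would use that $f^L$ is increasing with $\{x:f^L(x)>\alpha\}$ a right-ray whose infimum is $l_f(\alpha)$, and dually $f^R$ is decreasing with $\{x:f^R(x)>\alpha\}$ a left-ray whose supremum is $r_f(\alpha)$. Comparing superlevel sets then gives $g^L\le f^L\iff\forall\alpha\,(l_f(\alpha)\le l_g(\alpha))$ and $f^R\le g^R\iff\forall\alpha\,(r_f(\alpha)\le r_g(\alpha))$, which combined with the previous step closes the equivalence. As a sanity check one can run the same argument through $\pp$ (using $f\vee_{\wedge}g=g$ and the elementwise maximum of cuts); it produces the identical pair of inequalities, consistent with $\sq$ and $\pp$ agreeing on $\mathbf{L}$.

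I expect the main obstacle to be the endpoint bookkeeping rather than the overall structure: one must track whether each strong cut and each superlevel ray is open or closed at its finite endpoint, and verify that the backward direction delivers genuine set equality $f^{\hat{\alpha}}\wedge g^{\hat{\alpha}}=f^{\hat{\alpha}}$ and not merely agreement of infima and suprema. Convexity and normality keep the relevant intervals well behaved, and the fact that the characterization must hold simultaneously for all $\alpha\in[0,1)$ lets borderline values of $\alpha$ be absorbed, so I anticipate these subtleties can be resolved, but they are where the real care is needed.
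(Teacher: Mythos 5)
Your overall strategy (reduce $\sq$ to a statement about strong $\alpha$-cuts via Proposition \ref{.14} and Proposition \ref{..5}, then translate to the envelopes) is the right one, and indeed the paper proves no argument for this proposition itself -- it is cited from \cite{ha08} -- while its own Section 4 machinery (Lemma \ref{12}, Proposition \ref{16}) formalizes exactly this route. However, your proposal has a genuine gap at the translation step: you replace the set-level condition $f^{\hat\alpha}\wedge g^{\hat\alpha}=f^{\hat\alpha}$ by the numerical endpoint inequalities $l_f(\alpha)\le l_g(\alpha)$, $r_f(\alpha)\le r_g(\alpha)$, and you also claim $f^R\le g^R\iff\forall\alpha\,(r_f(\alpha)\le r_g(\alpha))$. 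Both claimed equivalences are false, because $\inf$ and $\sup$ of a strong cut do not record whether the endpoint is attained, while both the set equality and the envelope $f^R$ do. Concretely, take
\[ f(x)=\begin{cases}2x, & x\in[0,1/2),\\ 1, & x=1/2,\\ 0, & x>1/2,\end{cases}\qquad g(x)=\begin{cases}2x, & x\in[0,1/2),\\ 0, & x\ge 1/2.\end{cases} \]
Both are normal and convex, and for every $\alpha\in[0,1)$ one has $f^{\hat\alpha}=(\alpha/2,1/2]$ and $g^{\hat\alpha}=(\alpha/2,1/2)$, so $l_f(\alpha)=l_g(\alpha)$ and $r_f(\alpha)=r_g(\alpha)$: all your endpoint inequalities hold for every $\alpha$ simultaneously. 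Yet $f^R(1/2)=1>0=g^R(1/2)$, and indeed $f^{\hat\alpha}\wedge g^{\hat\alpha}=(\alpha/2,1/2)\ne f^{\hat\alpha}$, so $f\not\sq g$. This refutes your hope that quantifying over all $\alpha$ "absorbs" the borderline cases; the defect is not at isolated values of $\alpha$ but persists uniformly.

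The repair is to work with order-theoretic shadows of the cuts rather than their numerical endpoints: for an interval $A$ one has $A\p B$ iff $\ua B\subseteq\ua A$ and $\da A\subseteq\da B$ (the paper's Lemma \ref{12}), and these up- and down-sets do remember attainment. The link to the envelopes is then exact, since $\ua f^{\hat\alpha}=\{x\mid f^L(x)>\alpha\}=(f^L)^{\hat\alpha}$ and $\da f^{\hat\alpha}=\{x\mid f^R(x)>\alpha\}=(f^R)^{\hat\alpha}$. Combining this with the paper's Proposition \ref{16} ($f\sq g$ iff $f^{\hat\alpha}\p g^{\hat\alpha}$ for all $\alpha$) and Proposition \ref{..5} applied to $f^L,g^L,f^R,g^R$ gives precisely $g^L\le f^L$ and $f^R\le g^R$. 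In my example this correctly detects the failure: $\da f^{\hat\alpha}=[0,1/2]\not\subseteq[0,1/2)=\da g^{\hat\alpha}$. So your architecture survives, but the inf/sup bookkeeping must be replaced by these set inclusions throughout.
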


An element \(f\) of  \(\mathbf{M}\) is called \emph{upper semicontinuous}, if  \(f^{\alpha}\) is closed for any \(\alpha\in [0,1]\). Let \(\mathbf{L}_\mathbf{u}\) denote the set of all normal, convex  and upper  semicontinuous elements of \(\mathbf{M}\).   The bounded poset \((\mathbf{L}_\mathbf{u},\sq,\bar{0},\bar{1})\) is a  completely distributive lattice (see \cite{ha10}).

Let \(\mathbf{J}\) denote the set of all characteristic functions of singletons in \([0,1]\), i.e.
\[\mathbf{J}=\{\bar{x}\mid x\in[0,1]\}.\]   There exists an order isomorphism between \(([0,1],\le)\) and \((\mathbf{J},\sq)\), that is \[\tau: [0,1]\to \mathbf{J},\quad x\mapsto \bar{x}.\]

Let \(\mathbf{J}^{[2]}\) denote the set of all  characteristic functions of all closed subintervals of \([0,1]\), i.e. \[\mathbf{J}^{[2]}=\{\overline{[a,b]}\mid 0\le a \le b \le 1\}.\]
There also exists an order isomorphism between  \((\mathbf{I}^{[2]},\le_I)\) and \((\mathbf{J}^{[2]},\sq)\), that is \[\tau:\mathbf{I}^{[2]}\to \mathbf{J}^{[2]},\quad [a,b]\mapsto \overline{[a,b]}.\]

Obviously, \[\mathbf{J}\subseteq \mathbf{J}^{[2]}\subseteq \mathbf{L}_\mathbf{u}\subseteq \mathbf{L}=\mathbf{C}\cap \mathbf{N}\subseteq \mathbf{M}.\]
\begin{defn}\cite{he14}
A binary operation \(T : \mathbf{L}^2\to \mathbf{L}\) is called a
\(t_r\)-norm on \((\mathbf{L},\sq)\),  if \(T\) is a t-norm on \((\mathbf{L},\sq)\), \(T\) is closed on \(\mathbf{J}\) and \(\mathbf{J}^{[2]},\) respectively, and \(T(\overline{[0, 1]}, \overline{[a,b]})=\overline{[0,b]}.\)

 A binary operation \(S : \mathbf{L}^2\to \mathbf{L}\) is called a
\(t_r\)-conorm on \((\mathbf{L},\sq)\), if \(S\) is a t-conorm on  \((\mathbf{L},\sq)\), \(S\) is closed on \(\mathbf{J}\) and \(\mathbf{J}^{[2]},\) respectively, and \(S(\overline{[0, 1]}, \overline{[a,b]})=\overline{[a,1]}.\)
\end{defn}

 There exist two reasons to concern about  the poset \((\mathbf{L},\sq)\). On the one hand,  the  convolution orders \(\sqsubseteq\)  and \(\preceq\) coincide on \(\mathbf{L}\), and the bounded poset \((\mathbf{L},\sqsubseteq,\bar{0},\bar{1})\) is a complete  lattice. On the other hand, \(\mathbf{J}\subseteq \mathbf{J}^{[2]}\subseteq \mathbf{L}\) means that  \(\mathbf{L}\)-fuzzy sets are generalization of  type-1 fuzzy sets and interval-valued fuzzy sets.

\emph{ In the following context,  symbols  \(\ast\) and \(\va\) always denote any two  binary operations on \([0,1]\), and \(\ast_\va: \mathbf{M}^2\to \mathbf{M}\) denotes the convolution induced by \(\ast\) and \(\va\), i.e.}
\begin{equation}(f\ast_\va g)(x)=\bigvee_{y\ast z=x} f(y)\va g(z).\label{1.1}\end{equation}

\section{Necessity}

In this section, we  present the necessary conditions such that \(\ast_\va\) is a t-norm on \((\mathbf{L},\sq)\). Besides, we prove that convolution \(\ast_\va\) is a t-norm on \((\mathbf{L},\sq)\)  if and only if  it is a \(t_r\)-norm on \((\mathbf{L},\sq)\).

In order to prove Proposition \ref{8}, we  prove some relevant lemmas as follows.

\begin{lem}\label{p9}
If for all \(x\in [0,1]\), \(\bar{1}\ast_\va \bar{x} =\bar{x}\), then  for any \(A,B\subseteq [0,1]\), \[\bar{A}\ast_\va \bar{B}=\overline{A\ast B}.\]
\end{lem}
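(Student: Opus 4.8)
The plan is to reduce everything to the behaviour of $\va$ on the four corners of $\{0,1\}^2$. Since $\bar A$ and $\bar B$ take only the values $0$ and $1$, every summand in $(\bar A\ast_\va\bar B)(w)=\bigvee_{y\ast z=w}\bar A(y)\va\bar B(z)$ is one of $0\va0,\ 0\va1,\ 1\va0,\ 1\va1$. Thus it suffices to establish that $1\va1=1$ while $0\va0=0\va1=1\va0=0$: granting this, $\bar A(y)\va\bar B(z)$ equals $1$ precisely when $y\in A$ and $z\in B$, and is $0$ otherwise, so the supremum defining $(\bar A\ast_\va\bar B)(w)$ is $1$ iff some pair $(y,z)$ with $y\in A,\ z\in B$ satisfies $y\ast z=w$, i.e.\ iff $w\in A\ast B$ (the empty-supremum case $w\notin\Ran(\ast)$ giving $0$ on both sides). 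Pointwise this is exactly $\overline{A\ast B}(w)$, which is the claim.

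First I would read the three vanishing corners off the hypothesis by specialising it to $A=\{1\}$, $B=\{x\}$, so that $\bar 1\ast_\va\bar x=\bar x$, and then evaluating at carefully chosen points. A summand equal to $0\va0$ is produced at $w=y\ast z$ by any pair with $y\neq1$ and $z\neq x$, and a summand $1\va0$ is produced at $w=1\ast z$ by any $z\neq x$. Because $\bar x$ vanishes off the single point $x$ and $[0,1]$ has more than two points, I can choose $x$ so that these finitely many landing points all differ from $x$; the hypothesis then forces $0\va0=0$ and $1\va0=0$. The value of $\bar 1\ast_\va\bar x$ at $w=x$, which must equal $1$, will be used to pin down the last two corners.

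I expect the corner $0\va1$ to be the main obstacle. It occurs as a summand only at points $w=y\ast x$ with $y\neq1$, so to separate it from $x$ (and thereby force it to $0$) I need some $y\neq1$ with $y\ast x\neq x$. The genuinely delicate case is the degenerate operation in which \emph{every} element other than $1$ is a left identity of $\ast$: then $0\va1$ can never be isolated and the stated identity can in fact fail on $A=\varnothing$, so this possibility has to be excluded using the ambient hypotheses (for instance the commutativity that $\ast_\va$ is ultimately required to satisfy, or monotonicity of $\va$). Once a suitable $y$ is available, choosing $x$ appropriately forces $0\va1=0$, and feeding this back into the evaluation at $w=x$ shows that the only summand there capable of reaching $1$ is $1\va1$, coming from the pair $(1,x)$; hence $1\va1=1$ (and, as a byproduct, $1\ast x=x$).

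With $1\va1=1$ and $0\va0=0\va1=1\va0=0$ in hand, the reduction of the first paragraph applies verbatim and yields $\bar A\ast_\va\bar B=\overline{A\ast B}$ for all $A,B\subseteq[0,1]$.
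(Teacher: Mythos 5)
Your overall strategy coincides with the paper's: specialize the hypothesis to characteristic functions of singletons, reduce the identity to the four corner values $a\va b$ with $a,b\in\{0,1\}$, and observe that once $1\va 1=1$ and the other three corners are $0$, the convolution formula collapses pointwise to $\overline{A\ast B}$. Your treatment of $1\va 0$ and $0\va 0$ (isolating each corner as a summand whose landing point can be forced away from $x$) and of $1\va 1$ (evaluation at $w=x$ after the other corners vanish, which also yields $1\ast x=x$) is exactly what the paper does.

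The real content of your proposal is the corner $0\va 1$, and there you have caught a genuine flaw in the paper's own proof. The paper dismisses this corner with ``Similarly, we can get $0\va 1=0\va 0=0$'', but the argument is not symmetric: a summand $0\va 1$ can only land at points $w=y\ast x$ with $y\ne 1$, and the stated hypothesis cannot rule out that $y\ast x=x$ for every such $y$. Your degenerate case is a genuine counterexample to the lemma as stated: take both $\ast$ and $\va$ to be the projection onto the second coordinate. Then $\bar{1}\ast_\va \bar{x}=\bar{x}$ for every $x$, yet for $A=\varnothing$ and $B\ne\varnothing$ one computes $\bar{A}\ast_\va \bar{B}=\bar{B}\ne \bar{\varnothing}=\overline{A\ast B}$. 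Failures with nonempty $A$ also exist, e.g.\ $y\ast z=z$ for $y\ne 1$, $1\ast z=0$ for all $z$, $0\va 1=1$, $1\va 1=1\va 0=0\va 0=0$, $A=\{1\}$, $B=\{1/2\}$. So the lemma needs a stronger hypothesis, and of your two suggested repairs only one works: monotonicity of $\va$ does not help, since the projection $\va$ is monotone in each variable; but the two-sided identity $\bar{x}\ast_\va \bar{1}=\bar{x}$ --- which is available in the only place the lemma is used (Lemma \ref{2.15}, where $\ast_\va$ is a t-norm with unit $\bar{1}$, hence commutative) --- does: the mirror of the $1\va 0$ argument applied to $\bar{x}\ast_\va \bar{1}=\bar{x}$ forces $0\va 1=0$. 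With the hypothesis strengthened in this way, your outline (and the paper's proof) becomes complete.
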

\begin{proof}
Firstly, we show that  for any \(a,b\in \{0,1\}\), \(a\va b=a\wedge b\).   If  \(1\va 0>0\),  then for any two distinct \(x, y\in[0,1)\),   by (\ref{1.1})
 \[(\bar{1} \ast_\va \bar{x})(1\ast y)\ge \bar{1}(1)\va \bar{x}(y) =1\va 0>0,\]  thus, \[1\ast y\in (\bar{1} \ast_\va \bar{x})^{\hat{0}}=\bar{x}^{\hat{0}}=\{x\},\] so \(1\ast y=x\).  However, it is impossible that for any two distinct \(x, y\in[0,1)\),   \(1\ast y=x\).  Therefore,  \(1\va 0=0.\)
Similarly, we can get  \[0\va 1=0\va 0=0.\]  Due to
\begin{align*}
1&=(\bar{1}\ast_\va \bar{x})(x)\\&
 =\bigvee_{y\ast z=x}\bar{1}(y)\va \bar{x}(z)\\&
 \le \sup(\Ran(\bar{1})\va \Ran(\bar{x}))\\&
 =  \max (\{0,1\}\va \{0,1\})\\&
 =\max \{0\va 0, 0\va 1, 1\va 0, 1\va 1\}\\&
 =1 \va 1, 
\end{align*} we have  \(1\va 1=1\).

Since for any \(a,b\in \{0,1\}\), \(a\va b=a\wedge b\),  it is easy to check that for any \(A,B\subseteq [0,1]\), \(\bar{A}\ast_\va \bar{B}=\overline{A\ast B}\) by (\ref{1.1}).
\end{proof}

\begin{lem}\label{2.15}
Let \(\mathbf{J}^{[2]}\subseteq P\subseteq \mathbf{L}.\) If \(\ast_\va\) is a  t-norm on \((P,\sq)\), then \(\ast_\va\) is closed on \(\mathbf{J}\) and \(\mathbf{J}^{[2]}\), respectively, and \(\ast\) is a continuous t-norm. 
\end{lem}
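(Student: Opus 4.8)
The plan is to linearise everything onto characteristic functions via Lemma \ref{p9} and then force continuity through a monotonicity argument. First, since $\bar 1=\overline{[1,1]}\in\mathbf{J}\subseteq P$ is the top element of $(\mathbf{L},\sq)$ by Proposition \ref{77}, it must be the unit of the t-norm $\ast_\va$ on $(P,\sq)$, so $\bar 1\ast_\va\bar x=\bar x$ for every $x\in[0,1]$. This is exactly the hypothesis of Lemma \ref{p9}, which hands me the collapse identity $\bar A\ast_\va\bar B=\overline{A\ast B}$ for all $A,B\subseteq[0,1]$. Closedness on $\mathbf{J}$ is then immediate, since $\bar x\ast_\va\bar y=\overline{\{x\}\ast\{y\}}=\overline{x\ast y}\in\mathbf{J}$. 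Moreover the restriction $\ast_\va|_{\mathbf{J}}$ is closed, contains the bounds $\bar 0,\bar 1$, and inherits commutativity, associativity, monotonicity and the unit $\bar 1$, hence is a t-norm on $(\mathbf{J},\sq)$; transporting along the order isomorphism $\tau\colon x\mapsto\bar x$, which satisfies $\tau(x\ast y)=\overline{x\ast y}=\bar x\ast_\va\bar y=\tau(x)\ast_\va\tau(y)$, Proposition \ref{2} yields that $\ast$ is a t-norm on $([0,1],\le)$.

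The decisive and hardest step is continuity of $\ast$. Fixing $e\in[0,1]$ and writing $\phi_e(s)=s\ast e$, which is monotone increasing because $\ast$ is a t-norm, I would apply the collapse identity to $\overline{[0,1]}\in\mathbf{J}^{[2]}\subseteq P$ and $\bar e$ to obtain $\overline{[0,1]}\ast_\va\bar e=\overline{[0,1]\ast\{e\}}=\overline{\phi_e([0,1])}\in P\subseteq\mathbf{L}$. Since elements of $\mathbf{L}$ are convex, Proposition \ref{10} forces the set $\phi_e([0,1])$ to be a subinterval of $[0,1]$. The crux is then the real-analytic fact that a monotone increasing function on $[0,1]$ whose image is an interval can have no jump: at a hypothetical jump, every value strictly between the two one-sided limits (apart from the value attained at the jump point) is omitted, yet such a value lies strictly between $\phi_e(0)$ and $\phi_e(1)$, contradicting that the image is an interval. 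Hence each $\phi_e$ is continuous, and by commutativity $\ast$ is continuous in each variable, so Proposition \ref{p3}(i) upgrades this to joint continuity. I expect this monotone-plus-intermediate-value step to be the main obstacle; the remaining items are bookkeeping built on Lemma \ref{p9} and the isomorphism $\tau$.

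Finally, closedness on $\mathbf{J}^{[2]}$ drops out of continuity. For closed intervals $[a,b],[c,d]$ the collapse identity gives $\overline{[a,b]}\ast_\va\overline{[c,d]}=\overline{[a,b]\ast[c,d]}$, and because $\ast$ is continuous and monotone the image $[a,b]\ast[c,d]=\ast([a,b]\times[c,d])$ is the compact connected set $[a\ast c,\,b\ast d]$, that is, a closed interval; hence the product lies in $\mathbf{J}^{[2]}$. This establishes all three assertions.
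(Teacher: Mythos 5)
Your proof is correct, and it rests on the same two pillars as the paper's own proof: the collapse identity of Lemma \ref{p9} (available because $\bar{1}$, being the top of $(P,\sq)$, must be the unit of the t-norm), and the transport of the restriction of $\ast_\va$ to $\mathbf{J}$ along the isomorphism $\tau: x\mapsto \bar{x}$ via Proposition \ref{2}. Where you genuinely diverge is the order of the last two claims, and your order is the sounder one. The paper proves closedness on $\mathbf{J}^{[2]}$ \emph{before} continuity: it asserts that $\overline{A\ast B}=\bar{A}\ast_\va \bar{B}\in P\subseteq\mathbf{L}$ ``which means $A\ast B\in\mathbf{I}^{[2]}$,'' and then deduces continuity of the sections $a\ast(-)$ from the fact that $\{a\}\ast[0,1]$ is a \emph{closed} interval. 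Strictly speaking, membership of $\overline{A\ast B}$ in $\mathbf{L}$ only says that $A\ast B$ is a nonempty convex set, i.e.\ an interval, not necessarily a closed one (characteristic functions of non-closed intervals also lie in $\mathbf{L}$); closedness really requires the continuity of $\ast$, which the paper obtains only afterwards. You instead extract from $\mathbf{L}$-membership exactly what it gives --- that $\phi_e([0,1])$ is an interval --- prove continuity of the monotone sections from that alone (spelling out the no-jump argument that the paper leaves implicit), and only then recover closedness on $\mathbf{J}^{[2]}$ as the image of a compact connected set under a continuous map. So your proposal both fills in the paper's implicit monotone-plus-interval-image step and repairs a small overstatement in its treatment of $\mathbf{J}^{[2]}$; the ingredients, however, are identical.
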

\begin{proof}
Firstly,  \(\ast_\va\) is closed on \(\mathbf{J}\). In fact, since \(\ast_\va\) is  a t-norm,  \(\bar{1}\ast_\va \bar{x} =\bar{x}\) holds for all \(x\in [0,1]\). By Lemma \ref{p9},  for all \(x,y\in [0,1]\), \begin{equation}\label{q2}\bar{x}\ast_\va\bar{y}=\overline{x\ast y}.\end{equation}  thus  \(\ast_\va\) is closed on \(\mathbf{J}\).  Trivially, \(\ast_\va\) is a t-norm on \((\mathbf{J},\sq).\)

Next, \(\ast_\va\) is closed on \(\mathbf{J}^{[2]}\). Given any \(A, B\in \mathbf{I}^{[2]}\),   by Lemma \ref{p9},  \[\overline{A\ast  B}=\bar{A}\ast_\va \bar{B}\in P\subseteq \mathbf{L}\]  which means 
 \[A\ast  B\in \mathbf{I}^{[2]},\]  i.e. \[\overline{A\ast B}\in \mathbf{J}^{[2]}.\]   Hence, \(\ast_\va\) is closed on \(\mathbf{J}^{[2]}\).
 
Finally, \(\ast\) is continuous t-norm.  It follows from  (\(\ref{q2}\)) that 
\[\tau: [0,1]\to \mathbf{J},  \quad x\mapsto \bar{x}\] is an isomorphism between   posemigroups \(([0,1],\ast,\le )\) and \((\mathbf{J}, \ast_\va, \sq)\). This, together with  that \(\ast_\va\) is a t-norm on  \((\mathbf{J},\sq)\),  implies that   \(\ast\) is a t-norm on \(([0,1],\le)\) by Proposition \ref{2}. In the above, we have shown that for any \(A,B\in \mathbf{I}^{[2]}\), \[A\ast  B\in  \mathbf{I}^{[2]}.\]  
Hence, for each \(a\in [0,1]\),  \[\{a\} \ast [0,1]\in \mathbf{I}^{[2]},\] which means that the  section \[a\ast (-) : [0,1]\to[0,1] \] is  continuous. Therefore, t-norm  \(\ast\) is  continuous.
\end{proof}

\begin{lem}\label{2.3} Let \(\mathbf{L}_\mathbf{u}\subseteq P\subseteq \mathbf{L}\). 
 If \(\ast_\va\) is a t-norm on \((P,\sq)\), then  \(\va\) is a t-norm.
\end{lem}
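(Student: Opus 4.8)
The plan is to transfer the t-norm axioms from $\ast_\va$ on $(P,\sq)$ down to $\va$ on $([0,1],\le)$ by evaluating the relevant convolution identities at the single point $x=1$. Since $\mathbf{J}^{[2]}\subseteq \mathbf{L}_\mathbf{u}\subseteq P$, Lemma \ref{2.15} applies and tells us that $\ast$ is a (continuous) t-norm; in particular $\bar{1}$ is the top of $(P,\sq)$ and hence the unit of $\ast_\va$. The crucial consequence is that for a t-norm $\ast$ one has $y\ast z=1$ if and only if $y=z=1$, because $y\ast z\le y\wedge z$. Hence for all $f,g\in\mathbf{M}$,
\begin{equation*}
(f\ast_\va g)(1)=\bigvee_{y\ast z=1}f(y)\va g(z)=f(1)\va g(1).
\end{equation*}
This identity converts every statement about $\va$ at a pair $(a,b)$ into a statement about the convolution $\ast_\va$ evaluated at the point $1$.

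To realise arbitrary prescribed values at $1$ I will use the family of decreasing affine functions $f_a\in\mathbf{M}$ given by $f_a(x)=1-(1-a)x$ for $a\in[0,1]$. Each $f_a$ is normal (as $f_a(0)=1$), convex (being monotone decreasing, cf.\ Proposition \ref{10}(v)) and continuous, hence upper semicontinuous; thus $f_a\in\mathbf{L}_\mathbf{u}\subseteq P$, and $f_a(1)=a$. Commutativity, associativity and the unit law for $\va$ then follow at once. Indeed, evaluating $f_a\ast_\va f_b=f_b\ast_\va f_a$ at $1$ gives $a\va b=b\va a$; evaluating the associativity identity for $\ast_\va$ at $1$ (applying the displayed formula twice, using closure of $\ast_\va$ on $P$) gives $a\va(b\va c)=(a\va b)\va c$; and since $\bar{1}$ is the unit of $\ast_\va$, evaluating $\bar{1}\ast_\va f_t=f_t$ at $1$ yields $1\va t=t$, with $t\va 1=t$ following by commutativity.

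The main obstacle is monotonicity, because the order $\sq$ is \emph{not} the pointwise order, so $f_a\ast_\va g_1\sq f_a\ast_\va g_2$ does not by itself compare the values of the two sides at $1$. Here I would exploit two facts coming from Proposition \ref{5}. First, $u\sq v$ forces $u^R\le v^R$, and since $u^R(1)=u(1)$ for every $u$, we obtain $u(1)\le v(1)$; thus evaluation at $1$ is order-preserving from $(P,\sq)$ to $([0,1],\le)$. Second, for the decreasing functions $f_a$ one computes $f_a^L\equiv 1$ and $f_a^R=f_a$, so Proposition \ref{5} shows that on this family $\sq$ coincides with the pointwise order; in particular $b_1\le b_2$ implies $f_{b_1}\le f_{b_2}$ and hence $f_{b_1}\sq f_{b_2}$. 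Combining these with the monotonicity of the t-norm $\ast_\va$ gives $f_a\ast_\va f_{b_1}\sq f_a\ast_\va f_{b_2}$, and evaluating at $1$ yields $a\va b_1\le a\va b_2$; monotonicity in the first argument then follows from commutativity of $\va$. Assembling commutativity, associativity, monotonicity and the unit $1$ shows that $\va$ is a t-norm.
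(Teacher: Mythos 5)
Your proof is correct. It shares the paper's overall strategy---apply Lemma \ref{2.15} to conclude that \(\ast\) is a t-norm, then pull the axioms of \(\ast_\va\) back to \(\va\) through a one-parameter family of decreasing functions in \(\mathbf{L}_\mathbf{u}\subseteq P\)---but the transfer mechanism is genuinely different. The paper takes the step functions \(p_a\) (with \(p_a(0)=1\) and \(p_a(x)=a\) for \(x>0\)) and verifies the full functional identity \(p_a\ast_\va p_b=p_{a\va b}\), so that \(a\mapsto p_a\) is an isomorphism of commutative posemigroups between \(([0,1],\va,\le)\) and \((\mathcal{P},\ast_\va,\sq)\), where \(\mathcal{P}=\{p_a\mid a\in[0,1]\}\); commutativity, associativity and monotonicity of \(\va\) are transported along this isomorphism, and only the unit law \(a\va 1=a\) is read off by evaluating at \(x=1\). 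You promote that last trick into the whole proof: since \(y\ast z=1\) forces \(y=z=1\) when \(\ast\) is a t-norm, one has \((f\ast_\va g)(1)=f(1)\va g(1)\) for all \(f,g\in\mathbf{M}\), so every axiom of \(\va\) falls out of the corresponding identity for \(\ast_\va\) evaluated at the single point \(1\); the only additional ingredient, needed for monotonicity, is that \(u\mapsto u(1)=u^R(1)\) is order-preserving from \((\mathbf{L},\sq)\) to \(([0,1],\le)\) by Proposition \ref{5}. Your route is somewhat more economical: the paper's identity must be checked at every point (at \(x=0\) it implicitly uses \(1\va 1=1\), which is supplied by Lemma \ref{p9}), whereas yours is a one-point computation, and the particular family is immaterial---the paper's \(p_a\) would serve exactly as well as your affine \(f_a\). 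What the paper's argument buys in return is the stronger structural fact that \(\mathcal{P}\) is a subposemigroup of \((P,\ast_\va,\sq)\) isomorphic to \(([0,1],\va,\le)\), which is of independent interest.
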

\begin{proof} 
For each \(a\in[0,1]\), define  \(p_a\in \mathbf{L}_\mathbf{u}\) as  follows, \[ p_a(x)=\begin{cases} 1, \quad  x=0,\\ a,\quad \text{otherwise.}
\end{cases}\] 
Define the set \(\mathcal{P}=\{p_a\mid a\in[0,1]\}\)  and the order-preserving isomorphism  \(p:([0,1],\le)\to (\mathcal{P},\sq)\), \[p(a)=p_a.\]
By Lemma \ref{2.15}, \(\ast\) is a t-norm.  Then by (\ref{1.1}), it is not difficult to check that for all \(a,b\in[0,1]\),  \[p(a)\ast_\va p(b)=p(a\va b),\]
thus, t-norm \(\ast_\va\) is closed on \(\mathcal{P}\), and  \((\mathcal{P},\ast_\va,\sq)\) is a commutative posemigroup.  Since the map \(p\) is an isomorphism between  commutative posemigroups \(([0,1],\va,\le) \)  and \((P,\ast_\va, \sq)\), and for all \(a\in[0,1],\)   \[a=p_a(1)=(p_a\ast_\va \bar{1})(1)=p_a(1)\va \bar{1}(1)= a\va 1,\]  then  \(\va\) is a t-norm.
\end{proof}

\begin{prop}\label{8} If \(\ast_\va\) is  a   t-norm on \((\mathbf{L}_\mathbf{u},\sq)\) or \((\mathbf{L},\sq)\),  then 
\begin{itemize}
\item[(i)] \(\ast\)  is a  continuous t-norm and \(\va\) is a t-norm.
\item[(ii)] \(\ast_\va\) is  closed on  \(\mathbf{J}\) and \(\mathbf{J}^{[2]}\), respectively.
\item[(iii)]  \(\overline{[0, 1]}\ast_\va \overline{[a,b]}=\overline{[0,b]}.\)
\end{itemize}
\end{prop}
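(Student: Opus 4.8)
The plan is to derive all three items almost entirely from the three preceding lemmas, whose hypotheses have been arranged precisely so that both candidate posets $\mathbf{L}_\mathbf{u}$ and $\mathbf{L}$ fall in their scope. First I record the relevant inclusions: since $\mathbf{J}^{[2]}\subseteq \mathbf{L}_\mathbf{u}\subseteq \mathbf{L}$, whichever of $P=\mathbf{L}_\mathbf{u}$ or $P=\mathbf{L}$ is given, we have simultaneously $\mathbf{J}^{[2]}\subseteq P\subseteq \mathbf{L}$ and $\mathbf{L}_\mathbf{u}\subseteq P\subseteq \mathbf{L}$. This single observation makes Lemma \ref{2.15} and Lemma \ref{2.3} both applicable with the same $P$.

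For (i) and (ii): applying Lemma \ref{2.15} to this $P$ yields at once that $\ast_\va$ is closed on $\mathbf{J}$ and on $\mathbf{J}^{[2]}$, which is exactly (ii), and that $\ast$ is a continuous t-norm, which is half of (i). Applying Lemma \ref{2.3} to the same $P$ gives that $\va$ is a t-norm, the other half of (i). No further argument is needed for these two items.

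For (iii): since $\ast_\va$ is a t-norm its unit is $\bar 1$, so $\bar 1\ast_\va \bar x=\bar x$ for every $x\in[0,1]$, which is precisely the hypothesis of Lemma \ref{p9}. Hence $\bar A\ast_\va \bar B=\overline{A\ast B}$ for all $A,B\subseteq[0,1]$, and in particular $\overline{[0,1]}\ast_\va\overline{[a,b]}=\overline{[0,1]\ast[a,b]}$. It then remains to identify the set $[0,1]\ast[a,b]=\{x\ast y\mid x\in[0,1],\, y\in[a,b]\}$. Because $1$ is the unit of the t-norm $\ast$, monotonicity gives $x\ast y\le 1\ast y=y\le b$ for all such $x,y$, so $[0,1]\ast[a,b]\subseteq[0,b]$. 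For the reverse inclusion I invoke continuity of $\ast$ from (i): the section $x\mapsto x\ast b$ is continuous with $0\ast b=0$ (as $0\ast b\le 0\ast 1=0$) and $1\ast b=b$, so by the intermediate value theorem it attains every value of $[0,b]$, giving $[0,b]\subseteq[0,1]\ast[a,b]$. Combining the two inclusions yields $[0,1]\ast[a,b]=[0,b]$, whence $\overline{[0,1]}\ast_\va\overline{[a,b]}=\overline{[0,b]}$.

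The argument is essentially bookkeeping once the lemmas are in hand: the only genuine content lies in the interval computation for (iii), and there the single delicate point is that continuity of $\ast$ is exactly what upgrades the easy bound $[0,1]\ast[a,b]\subseteq[0,b]$ to an equality via the intermediate value theorem. I expect no real obstacle beyond correctly attributing which lemma supplies which conclusion and verifying the endpoint values of the relevant section.
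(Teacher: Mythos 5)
Your proposal is correct and follows essentially the same route as the paper: items (i) and (ii) are read off from Lemma \ref{2.15} and Lemma \ref{2.3} (both applicable since $\mathbf{J}^{[2]}\subseteq\mathbf{L}_\mathbf{u}\subseteq P\subseteq\mathbf{L}$ for either choice of $P$), and (iii) comes from Lemma \ref{p9} together with the continuity of $\ast$. The only difference is that you spell out the interval computation $[0,1]\ast[a,b]=[0,b]$ via monotonicity and the intermediate value theorem, which the paper leaves implicit.
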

\begin{proof} (i)  and (ii)  are straightforward from Lemma \ref{2.15} and Lemma \ref{2.3}. By the continuity of  \(\ast\) and  Lemma \ref{p9}, (iii) holds.
\end{proof}

 \begin{thm}  \label{24}
The  convolution \(\ast_\va\) is a  \(t_r\)-norm  on \((\mathbf{L},\sq)\) if and only if   it is    a \(t\)-norm on \((\mathbf{L},\sq).\)
\end{thm}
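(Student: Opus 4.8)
The plan is to reduce the claim to the already-established Proposition \ref{8}, since the definition of a $t_r$-norm on $(\mathbf{L},\sq)$ is nothing more than that of a t-norm augmented with two structural requirements. Accordingly, I would split the equivalence into its two implications and dispatch them separately.

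For the forward implication I would simply observe that it holds by definition: every $t_r$-norm on $(\mathbf{L},\sq)$ is, in particular, a t-norm on $(\mathbf{L},\sq)$, so there is nothing to verify.

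The substantive direction is the converse. Suppose $\ast_\va$ is a t-norm on $(\mathbf{L},\sq)$. To upgrade this to a $t_r$-norm I must check the two extra clauses in the definition, namely that $\ast_\va$ is closed on $\mathbf{J}$ and on $\mathbf{J}^{[2]}$, and that $\overline{[0,1]}\ast_\va\overline{[a,b]}=\overline{[0,b]}$ for all $0\le a\le b\le 1$. These are precisely items (ii) and (iii) of Proposition \ref{8}, which applies because the $(\mathbf{L},\sq)$ case is exactly one of the two hypotheses admitted there. Invoking those two conclusions and combining them with the standing assumption that $\ast_\va$ is a t-norm yields all three defining clauses of a $t_r$-norm, and the converse follows.

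The point I expect to carry the weight is therefore not in this theorem at all, but in Proposition \ref{8} and the lemmas feeding it (Lemma \ref{p9}, Lemma \ref{2.15}, and Lemma \ref{2.3}): establishing that being a t-norm on $(\mathbf{L},\sq)$ already forces the convolution to restrict to $\mathbf{J}$ and $\mathbf{J}^{[2]}$ and to satisfy the boundary identity. Granting those results, the present equivalence is a one-line corollary, and the only care needed is bookkeeping, to confirm that the hypotheses of Proposition \ref{8} match the situation here and that its conclusions (ii)--(iii) coincide verbatim with the supplementary clauses in the definition of a $t_r$-norm.
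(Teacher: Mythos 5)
Your proposal is correct and follows exactly the paper's own argument: the forward implication is immediate from the definition of a $t_r$-norm, and the converse is obtained by citing items (ii) and (iii) of Proposition \ref{8}, which supply precisely the closure conditions on $\mathbf{J}$ and $\mathbf{J}^{[2]}$ and the boundary identity $\overline{[0,1]}\ast_\va\overline{[a,b]}=\overline{[0,b]}$. Nothing is missing; the weight indeed rests on Proposition \ref{8} and its supporting lemmas, just as you note.
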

\begin{proof}
Obviously, each  \(t_r\)-norm is a t-norm. Another direction holds from Proposition \ref{8}.
\end{proof}

\begin{prop}\label{7}
If \(\ast_\va\) is  a  t-norm on \((\mathbf{L},\sq)\),  then \(\va\) is a border continuous t-norm.
\end{prop}
\begin{proof}
From Proposition \ref{8},  \(\ast\) is a continuous t-norm and \(\va\) is a  t-norm. 
For any \(a\in[0,1]\), let \(f,g,h\in \mathbf{L}\) be given by \(f\equiv 1,\) \[g(x)=\begin{cases} x,\quad  x\in[0,1),\\ 0, \quad x=1,\end{cases}\]    \[h(x)=\begin{cases} 1,\quad x=0,\\ a, \quad x\in(0,1].\end{cases}\]

 Claim 1,  for each  \(x_0\in(0,1)\),
\((f\ast_\va g)\ast_\va h)(x_0)\ge a\). In fact, for each \(y\in[x_0,1)\), since  \(\ast\) is continuous, there exists \(z\in[0,1]\) such that \[y\ast z=x_0,\] and thus, \[(f\ast_\va g)(x_0)=\bigvee_{s\ast t=x_0}f(x)\ast g(t)\ge f(z)\va g(y)=1\va g(y)=y.\]
Hence,  \[(f\ast_\va g)(x_0)=1.\]  Therefore, \[((f\ast_\va g)\ast_\va h)(x_0)\ge (f\ast_\va g)(x_0)\ast h(1)=1\ast a= a.\]

Claim 2,  for each \(x_0\in(0,1)\), \((f\ast_\va (g\ast_\va h))(x_0)\le a\va 1^-.\) For this purpose, we only need to check that  for any \(y_0,z_0\in  [0,1]\) satisfying \(y_0\ast z_0=x_0,\) \begin{equation}\label{..6}f(y_0)\ast (g\ast_\va h)(z_0)\le a\va 1^-.\end{equation}

In fact, for all \(z_1,z_2\in [0,1]\) satisfying \[z_1\ast z_2=z_0,\] since  \[z_1\ast z_2\ast y_0=z_0\ast y_0=x_0>0,\] then \( z_2>0\),  thus,  \[g(z_1)\va h(z_2)=g(z_1)\va  a \le a\va 1^-.\]

In the above,  we have  shown that  for all   \(z_1,z_2\in [0,1]\) satisfying \(z_1\ast z_2=z_0,\) \[g(z_1)\va h(z_2)\le a \va 1^-.\]  Hence, \[(g\ast_\va h)(z_0)\le a\va 1^-.\] Therefore, (\ref{..6}) holds.

By claim 1 and claim 2, for any \(x_0\in(0,1)\),   
\[a\le ((f\ast_\va g)\ast_\va h)(x_0)= (f\ast_\va (g\ast_\va h))(x_0)\le a\va 1^-.\]
On the other hand, since \(\va\) is a t-norm, \(a\ge a\va 1^-\). Therefore, \(a=a\va 1^-\), i.e. t-norm  \(\va\) is border continuous.
\end{proof}

\begin{prop}\cite{al06}  Suppose that \(0\le a<b\le 1\). A t-norm \(\ast:[a,b]^2\to [a,b]\)  is  Archimedean continuous   if and only if  it is isomorphic to product t-norm \(T_P\) or  {\L}uckasiewicz t-norm \(T_L\).
\end{prop}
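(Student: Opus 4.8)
The plan is to reduce to the standard interval $[0,1]$ and then invoke the additive–generator representation of continuous Archimedean t-norms. First I would dispose of the easy direction: both $T_P$ and $T_L$ are continuous and Archimedean on $[0,1]$ (for $T_P$ one has $x^{(n)}_\ast=x^n\to 0$; for $T_L$ every interior point is nilpotent), and since continuity and the Archimedean condition are visibly preserved by an order isomorphism of intervals, any $\ast$ isomorphic to $T_P$ or $T_L$ is itself continuous and Archimedean. It also suffices to treat the case $[a,b]=[0,1]$: the affine map $\phi:[0,1]\to[a,b]$, $\phi(x)=a+(b-a)x$, is an order isomorphism, so by Proposition \ref{2} the transported operation $\ast'=\phi^{-1}\circ\ast\circ(\phi\times\phi)$ is a t-norm on $[0,1]$ that is continuous and Archimedean exactly when $\ast$ is, and isomorphic to $T_P$ or $T_L$ exactly when $\ast$ is.

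For the main direction the crux is the representation theorem: every continuous Archimedean t-norm $T$ on $[0,1]$ admits a continuous, strictly decreasing \emph{additive generator} $t:[0,1]\to[0,\infty]$ with $t(1)=0$ such that
\[
T(x,y)=t^{(-1)}\bigl(\min\{t(x)+t(y),\,t(0)\}\bigr),
\]
where $t^{(-1)}$ denotes the pseudo-inverse of $t$. I would obtain this either by Ling's construction — using continuity and the Archimedean property to show that the diagonal $x\mapsto T(x,x)$ lies strictly below the identity on $(0,1)$, fixing a point $c\in(0,1)$, and building $t$ by interpolating the $T$-powers $c^{(n)}_\ast$ along dyadic exponents and passing to a continuous limit — or by appealing to the Mostert–Shields structure theorem for compact connected abelian topological semigroups. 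This existence statement is the \emph{main obstacle}: it is the genuinely analytical heart of the proposition, amounting to solving the associativity functional equation under continuity and cancellativity, and it is exactly the content packaged in the cited monograph \cite{al06}.

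Granting the generator, I would finish by a dichotomy on the value $t(0)$. If $t(0)=\infty$, then $T$ has no zero divisors (it is strict) and $h(x)=e^{-t(x)}$ is an order isomorphism of $[0,1]$ with $h(0)=0$, $h(1)=1$ satisfying
\[
h\bigl(T(x,y)\bigr)=e^{-(t(x)+t(y))}=h(x)\,h(y)=T_P\bigl(h(x),h(y)\bigr),
\]
so $T$ is isomorphic to $T_P$. If instead $t(0)<\infty$, I normalize the generator so that $t(0)=1$; then $h(x)=1-t(x)$ is an order isomorphism of $[0,1]$ with $h(0)=0$, $h(1)=1$ and
\[
h\bigl(T(x,y)\bigr)=1-\min\{t(x)+t(y),1\}=\max\{h(x)+h(y)-1,0\}=T_L\bigl(h(x),h(y)\bigr),
\]
so $T$ is isomorphic to $T_L$. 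In either case Proposition \ref{2} records the computation as an isomorphism of posemigroups, which completes the forward direction and, after transporting back along $\phi$, yields the full equivalence.
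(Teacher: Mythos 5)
Your proposal is correct, but note that the paper offers no proof of this proposition at all: it is quoted as a classical result directly from the cited monograph \cite{al06}, so the ``paper's approach'' is simply citation. What you have written is the standard argument behind that citation --- reduction to $[0,1]$ by an affine order isomorphism, Ling's additive-generator representation, and the dichotomy $t(0)=\infty$ (strict case, isomorphic to $T_P$) versus $t(0)<\infty$ (nilpotent case, isomorphic to $T_L$) --- and it is sound, with the generator existence theorem correctly flagged as the one ingredient that must itself be imported from \cite{al06}.
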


\begin{prop}\label{0.20}\cite{al06} A binary function  \(T:[0,1]^2\to [0,1]\) is a continuous t-norm if and only if there exists a family of pairwise disjoint  open subintervals  \((a_\alpha, b_\alpha)_{\alpha\in A}\) of  \([0,1]\) and a family of  continuous   Archimedean t-norms  \(T_\alpha: [a_\alpha, b_\alpha]^2\to  [a_\alpha, b_\alpha]\)  such that \(T\) can be represented  by \[  T(x,y)=\begin{cases} T_\alpha(x,y), & (x,y)\in[a_\alpha, b_\alpha]^2,\\ x\wedge y, & \text{otherwise.}
\end{cases}
\]
\end{prop}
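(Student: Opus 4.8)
The plan is to prove the two implications separately. The \emph{if} direction is a direct verification, while the \emph{only if} direction carries the genuine content and relies on the preceding proposition on Archimedean continuous t-norms.

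For sufficiency, assume $T$ is defined by the displayed formula. Commutativity of $T$ is inherited from the commutativity of each $T_\alpha$ and of $\wedge$. The number $1$ is neutral: either $1$ lies outside every component $(a_\alpha,b_\alpha)$, where $T(1,x)=1\wedge x=x$, or $1=b_\alpha$ is the right endpoint of one component, where $1$ is the unit of $T_\alpha$ and again $T(1,x)=x$. Monotonicity and, crucially, associativity follow from a case analysis on the components containing the three arguments, the key point being that $a_\alpha$ is the annihilator and $b_\alpha$ the unit of $T_\alpha$, so that $T_\alpha$ coincides with $\wedge$ on the boundary of $[a_\alpha,b_\alpha]^2$. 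This same boundary agreement yields continuity, since $T$ is continuous on each closed square $[a_\alpha,b_\alpha]^2$ and on the closed region where it equals $\wedge$, and the two prescriptions match wherever these regions meet.

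For necessity, let $E=\{x\in[0,1]\mid T(x,x)=x\}$ be the set of idempotents. Continuity of $T$ makes $E$ closed, and $0,1\in E$, so $[0,1]\setminus E$ is a countable union of pairwise disjoint open intervals $(a_\alpha,b_\alpha)$ whose endpoints lie in $E$; these are the required intervals. The heart of the argument is the lemma that every idempotent $c$ acts as the minimum, i.e. $T(c,x)=c\wedge x$ for all $x$. For $x\ge c$ this is immediate from monotonicity, since $c=T(c,c)\le T(c,x)\le T(c,1)=c$. For $x\le c$, consider $g(x)=T(c,x)$ on $[0,c]$: it is continuous and nondecreasing with $g(0)=0$ and $g(c)=c$, hence maps onto $[0,c]$, while associativity together with $T(c,c)=c$ gives $g\circ g=g$; an idempotent continuous surjection of $[0,c]$ is forced to be the identity, whence $T(c,x)=x$. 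Granting the lemma, if $x\le y$ lie in different components or either is idempotent, then some idempotent $c$ satisfies $x\le c\le y$, and monotonicity gives $x=T(x,c)\le T(x,y)\le x\wedge y=x$, so $T(x,y)=x\wedge y$; if instead $x,y\in[a_\alpha,b_\alpha]$, then $T$ restricts to a continuous t-norm $T_\alpha$ on $[a_\alpha,b_\alpha]$ with unit $b_\alpha$ and no idempotent in $(a_\alpha,b_\alpha)$, so by the preceding proposition $T_\alpha$ is isomorphic to $T_P$ or $T_L$. Assembling these cases produces exactly the claimed formula.

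The main obstacle is the pair of fixed-point arguments underlying the forward direction: the idempotent lemma, whose delicate case $x\le c$ exploits both continuity and the self-composition identity $g\circ g=g$, and the implication that the absence of interior idempotents forces $T_\alpha$ to be Archimedean. The latter I would prove by noting that the nonincreasing sequence of $T$-powers $x^{(n)}$ converges to a fixed point which, being idempotent, must equal the annihilator $a_\alpha$, so that $x^{(n)}$ eventually drops below any prescribed $y\in(a_\alpha,b_\alpha)$, giving the Archimedean property needed to invoke the preceding proposition.
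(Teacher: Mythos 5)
The paper offers no proof of this proposition at all: it is quoted as a known result from the monograph \cite{al06} (it is the classical Mostert--Shields/ordinal-sum representation of continuous t-norms), so there is no ``paper's route'' to compare against. Judged on its own merits, your sketch is the standard textbook proof and its essential steps are correct: the set $E$ of idempotents is closed and contains $0,1$, its complement decomposes into countably many disjoint open intervals; the key lemma that an idempotent $c$ satisfies $T(c,x)=c\wedge x$ is proved correctly (for $x\le c$, the map $g=T(c,\cdot)$ is a continuous monotone surjection of $[0,c]$ with $g\circ g=g$, and any idempotent surjection is the identity since every point is in the image and image points are fixed); and Archimedeanity of the restriction to $[a_\alpha,b_\alpha]$ follows from your limit-of-powers argument, the limit being idempotent by continuity and hence equal to $a_\alpha$. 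Two remarks. First, invoking the preceding proposition to get an isomorphism with $T_P$ or $T_L$ is superfluous: the statement only asks that each $T_\alpha$ be continuous Archimedean, which you establish directly. Second, the sufficiency direction is the place where you defer the real work: the associativity case analysis is genuinely tedious, and your continuity argument via pasting closed regions is not quite complete when the family of intervals is infinite, since the pasting lemma requires local finiteness; at an accumulation point of the intervals one needs the extra (easy) squeeze argument that each $T_\alpha$ maps $[a_\alpha,b_\alpha]^2$ into $[a_\alpha,b_\alpha]$, so values near such a point are trapped near it. These are routine completions, not gaps in the idea.
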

Obviously, if the index set \(A=\varnothing\) , then \(T=\wedge\) in the above proposition. 

\begin{lem}\label{0.5}
Let \(\ast\) be a continuous t-norm not equal to \(\wedge\).  Then there exist \(0\le \alpha<\beta\le 1\) such  that  the restriction of \(\ast\) on \([\alpha,\beta]\) is a continuous Archimedean t-norm. Besides, the following statements hold.

(i) If   \(a\ast b\in (\alpha, \beta)\), then \(a,b\in  [\alpha,\beta]\).

(ii)  if \(a_1\ast b_1=a_2\ast b_2\in   (\alpha,\beta)\), then \(a_1>a_2\) implies  \(b_1<b_2\).
\end{lem}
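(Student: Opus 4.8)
The plan is to read both statements off the ordinal–sum structure of continuous t-norms provided by Proposition~\ref{0.20}. Since \(\ast\neq\wedge\), the index set in that representation is nonempty, so \(\ast\) has at least one Archimedean summand \([a_\gamma,b_\gamma]\), and I would take \([\alpha,\beta]\) to be such a block. By construction the restriction \(\ast|_{[\alpha,\beta]^2}=T_\gamma\) is closed in \([\alpha,\beta]\), has \(\beta\) as unit and \(\alpha\) as annihilator, and is a continuous Archimedean t-norm; by the classification immediately preceding Proposition~\ref{0.20} it is isomorphic to the product or {\L}ukasiewicz t-norm, a fact I will use for (ii). The only genuine freedom is \emph{which} block to pick, and the selection is what makes (i) work, so I would address it first.

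For (i) the lower bound is immediate: every t-norm satisfies \(a\ast b\le a\wedge b\), so \(a\ast b>\alpha\) forces \(a,b>\alpha\). The content is the upper bound \(a,b\le\beta\), which I would prove by locating \(a,b\) relative to the summands. If, say, \(b>\beta\), then because the open summand intervals are pairwise disjoint and \((\alpha,\beta)\) is the unique one whose interior can contain a non-idempotent point below \(\beta\), the pair \((a,b)\) cannot lie in a common block, so \(\ast\) acts as \(\wedge\) there and \(a\ast b=a\wedge b\). The decisive requirement is that this minimum be driven out of the open interval \((\alpha,\beta)\); after treating the symmetric case \(a>\beta\) and the boundary cases (one argument equal to \(\beta\), or idempotent), this yields \(a,b\in[\alpha,\beta]\).

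Granting (i), statement (ii) is a cancellation property internal to the block. By (i) the four numbers \(a_1,b_1,a_2,b_2\) all lie in \([\alpha,\beta]\), where \(\ast=T_\gamma\) is continuous and Archimedean, hence conditionally cancellative and strictly increasing in each argument wherever its output exceeds the annihilator \(\alpha\). Since the common value \(a_1\ast b_1=a_2\ast b_2\) lies in \((\alpha,\beta)\) it exceeds \(\alpha\); were \(b_1\ge b_2\) with \(a_1>a_2\), strict monotonicity would give \(T_\gamma(a_1,b_1)>T_\gamma(a_2,b_1)\ge T_\gamma(a_2,b_2)\), contradicting the equality. Hence \(b_1<b_2\), and (ii) follows with no extra work once (i) is in hand.

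The hard part is exactly the upper-bound half of (i), because it is the only place where arguments \emph{outside} \([\alpha,\beta]\) intervene, and the ``minimum is driven out of \((\alpha,\beta)\)'' step is not automatic: if \(\beta<1\), then for any \(a\in(\alpha,\beta)\) and any \(b\in(\beta,1]\) one has \(a\ast b=a\wedge b=a\in(\alpha,\beta)\) while \(b\notin[\alpha,\beta]\), so (i) simply fails for that block. Thus the real task is to secure a summand whose right endpoint is the global unit, \(\beta=1\) (equivalently, an Archimedean block abutting \(1\)), after which no argument can exceed \(\beta\) and the collapse to the minimum can never remain in \((\alpha,\beta)\). Establishing that such a block is available—i.e. that \(1\) is the right endpoint of a summand in the situation at hand—is the step I expect to be the main obstacle and the one I would pin down before writing the case analysis above.
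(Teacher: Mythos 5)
Your route is the same as the paper's---the ordinal-sum representation of Proposition \ref{0.20}, plus conditional cancellativity of the Archimedean summand for (ii)---and your reduction of (ii) to (i) is sound. But the step you deferred, namely securing a summand whose right endpoint is \(1\), is not merely the main obstacle: it cannot be carried out in general, and in fact Lemma \ref{0.5} as stated is \emph{false}. Take \(\ast\) to be the ordinal sum with the single summand \([0,\frac{1}{2}]\), i.e. \(x\ast y=2xy\) for \((x,y)\in[0,\frac{1}{2}]^2\) and \(x\ast y=x\wedge y\) otherwise. This is a continuous t-norm different from \(\wedge\), and \([0,\frac{1}{2}]\) is the only interval on which its restriction is a continuous Archimedean t-norm (a proper subinterval of the summand fails the unit or the closure axiom, and any interval meeting \((\frac{1}{2},1]\) has interior idempotents). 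For this unique admissible \([\alpha,\beta]\) both conclusions fail: \(\frac{1}{4}\ast\frac{3}{4}=\frac{1}{4}\in(0,\frac{1}{2})\) while \(\frac{3}{4}\notin[0,\frac{1}{2}]\), which is exactly your observation; and \(0.7\ast 0.4=0.6\ast 0.4=0.4\in(0,\frac{1}{2})\) with \(0.7>0.6\) but certainly not \(0.4<0.4\), so (ii) fails as well. So what you identified is not a gap you could have closed by a cleverer choice of summand; it is a counterexample to the statement, and no proof exists.

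For comparison, the paper's own proof is two sentences: it asserts that (i) ``holds immediately'' from Proposition \ref{0.20} and deduces (ii) from conditional cancellativity. It never confronts arguments lying outside \([\alpha,\beta]\), which is precisely the configuration you isolated, so the published proof breaks at exactly the point you refused to gloss over; your analysis is the more careful one. The error is repairable where the lemma is used (Proposition \ref{0.21}, Cases 2 and 3): there \(s\in(\alpha,\beta)\) and \(s\ast t=\lambda_{\ast}^{(3)}\in(\alpha,\beta)\), and the troublesome case \(t>\beta\) can be excluded by hand, since the ordinal-sum form then gives \(s\ast t=s\wedge t=s>\lambda_{\ast}^{(3)}\) (as \(s=\lambda\) in Case 2 and \(s>\lambda\ast\lambda\) in Case 3), a contradiction; once \(t\le\beta\), in-block cancellativity runs as intended. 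Equivalently, the lemma should be restated with the weaker conclusion that \(a\wedge\beta,\,b\wedge\beta\in[\alpha,\beta]\) and \(a\ast b=(a\wedge\beta)\ast(b\wedge\beta)\), with (ii) restricted to arguments in \([\alpha,\beta]\); that version is true, is provable by the case analysis you sketched, and suffices for the paper's application.
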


\begin{proof}
By Proposition \ref{0.20}, (i) holds immediately. 
Since the restriction of  \(\ast\) on \([\alpha,\beta]\) is isomorphic to \(T_p\) or \(T_L\), it is conditionally cancellative. Hence, (ii) holds.
\end{proof}

\begin{prop}\label{0.21} If \(\ast_\va\) is  a  t-norm on \((\mathbf{L},\sq)\) and \(\ast\ne \wedge\), then   \(\va\) is a left-continuous t-norm.
\end{prop}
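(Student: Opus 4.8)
The plan is to prove that $\va$ is left-continuous in its second argument; since $\va$ is a (commutative) t-norm by Proposition \ref{8}, left-continuity in each component then follows. Fix $a\in[0,1]$; recalling $a\va b^-=\sup\{a\va t\mid t<b\}\le a\va b$, it suffices to establish the reverse inequality $a\va b\le a\va b^-$. The case $b=0$ is trivial and the case $b=1$ is the border continuity of Proposition \ref{7}, so I fix $b\in(0,1)$. The tools are: Proposition \ref{8}, which gives that $\ast$ is a continuous t-norm and $\va$ a t-norm; Lemma \ref{0.5}, which (using $\ast\ne\wedge$) produces an interval $[\alpha,\beta]$ on which $\ast$ restricts to a continuous Archimedean t-norm with unit $\beta$, enjoying properties (i) and (ii) there; and the associativity of $\ast_\va$, valid since $\ast_\va$ is a t-norm on $(\mathbf{L},\sq)$.

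Next I would fix a point $x_0\in(\alpha,\beta)$ and introduce three test functions in $\mathbf{L}$: the characteristic function $f=\overline{[0,\beta)}$; a function $g$ equal to $1$ on $[0,x_0)$ and continuous, strictly decreasing on $[x_0,1]$ with $g(x_0)=b$; and the spike $h$ with $h(0)=1$ and $h\equiv a$ on $(0,1]$. Each is normal and convex, hence lies in $\mathbf{L}$. The whole argument then compares the two bracketings of $f\ast_\va g\ast_\va h$ evaluated at $x_0$: I aim to show $((f\ast_\va g)\ast_\va h)(x_0)\ge a\va b$ and $(f\ast_\va(g\ast_\va h))(x_0)\le a\va b^-$, after which associativity of $\ast_\va$ forces $a\va b\le a\va b^-$.

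For the lower bound, because $f$ is $1$ on $[0,\beta)$ and $0$ at $\beta$, the term $f(s)\va g(t)$ reduces to $g(t)$ exactly when $s<\beta$, so $(f\ast_\va g)(x_0)=\sup\{g(t)\mid t\in(x_0,\beta]\}=b$, the supremum being the right limit $g(x_0)=b$. Decomposing $x_0=x_0\ast\beta$ (legitimate since $\beta$ is the unit of $\ast$ on $[\alpha,\beta]$) and using $h(\beta)=a$ gives $((f\ast_\va g)\ast_\va h)(x_0)\ge(f\ast_\va g)(x_0)\va h(\beta)=b\va a$. For the upper bound, consider any $u,w$ with $u\ast w=x_0$; by Lemma \ref{0.5}(i) they lie in $[\alpha,\beta]$ and exceed $\alpha$, and the contribution $f(u)\va(g\ast_\va h)(w)$ vanishes unless $u<\beta$. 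Comparing the decompositions $x_0=\beta\ast x_0=u\ast w$ of $x_0$ via Lemma \ref{0.5}(ii) with $\beta>u$ yields $w>x_0$. Then for every $s,t$ with $s\ast t=w$ one has $w=s\ast t\le s$, so $s\ge w>x_0$, and likewise $t>\alpha$; hence $h(t)=a$ while $g(s)<b$ since $g$ is strictly decreasing past $x_0$, so $g(s)\va h(t)=g(s)\va a\le a\va b^-$, giving $(g\ast_\va h)(w)\le a\va b^-$ and therefore $(f\ast_\va(g\ast_\va h))(x_0)\le a\va b^-$.

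The decisive difficulty, and the only place where $\ast\ne\wedge$ is used, is arranging that the value $b$ be attainable as the supremum $(f\ast_\va g)(x_0)$ in the first bracketing yet strictly excluded in the second, while keeping $f,g,h$ normal (so necessarily attaining $1$) and convex. My resolution is to hide the normalizing $1$-plateau of $g$ on $[0,x_0)$, which conditional cancellativity renders irrelevant to the second bracketing, and to make $f$ vanish exactly at $\beta$ so that the unit is deleted; conditional cancellativity (Lemma \ref{0.5}(ii)) then forces $w>x_0$ and hence $g(s)<b$ throughout the inner convolution. This strict inequality $w>x_0$ is the analogue, for the general threshold $b$, of the absorption fact $z_2>0$ that powered the border-continuity argument in Proposition \ref{7}; once it is secured, the two one-sided estimates together with associativity close the proof.
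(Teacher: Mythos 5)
Your proof is correct, and strategically it mirrors the paper's: both use Lemma \ref{0.5} to isolate an Archimedean summand \([\alpha,\beta]\) of \(\ast\), build three test functions in \(\mathbf{L}\), and compare the two bracketings of \(f\ast_\va g\ast_\va h\) at a single point, obtaining \(\ge a\va b\) one way and \(\le a\va b^-\) the other, with conditional cancellativity (Lemma \ref{0.5}(ii)) forcing the inner variable strictly past the threshold in the second bracketing. The constructions differ in a worthwhile way, however. The paper evaluates at \(\la_\ast^{(3)}\) (which requires choosing \(\la\) with \(\la_\ast^{(3)}\in(\alpha,\beta)\)), and its ramp \(f\) attains the value \(v\) only as a left limit at \(\la\), so its lower bound (Claim 1 there) must invoke the border continuity of \(\va\) from Proposition \ref{7} through a sequential limit \(f(x_n)\va g(y_n)\to v\va 1=v\). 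You instead place the constant \(1\) in \(f=\overline{[0,\beta)}\) and the threshold value \(b\) in \(g\) at the point \(x_0\) itself; the lower bound then computes exactly (\(1\va g(t)=g(t)\), no limit of \(\va\)-terms is needed), border continuity enters only for the trivial boundary case \(b=1\) (which your main argument would in fact reprove), and the evaluation point is an arbitrary \(x_0\in(\alpha,\beta)\) rather than a third power. That makes your route marginally leaner and less dependent on Proposition \ref{7}. One small step you should make explicit: the equality \((f\ast_\va g)(x_0)=\sup\{g(t)\mid t\in(x_0,\beta]\}=b\) tacitly uses that each \(t\in(x_0,\beta]\) really occurs in a decomposition with \(s<\beta\); this follows from the continuity of \(\ast\) and the intermediate value theorem, since \(0\ast t=0<x_0<t=\beta\ast t\), while every decomposition of \(x_0\) has \(t\ge x_0\) and \(g\le b\) on \([x_0,1]\), so the supremum cannot exceed \(b\).
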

\begin{proof}
By Proposition \ref{7}, \(\va\) is a border continuous t-norm.   Given any \(u, v\in (0,1)\), we shall show  \(u\va v= u\va v^-\). Hence, \(\va\) is a left-continuous t-norm.

By Proposition \ref{8},  \(\ast\) is a continuous t-norm. Since \(\ast\neq \wedge\),   there exist  \(0\le \alpha<\beta\le 1\) such that the restriction of \(\ast\) on \([\alpha,\beta]\) is a
continuous Archimedean  t-norm. Choose arbitrary \(\la \in (\alpha,\beta)\)  satisfying \begin{equation}\label{e3}\la_{\ast}^{(3)}\in (\alpha,\beta),\end{equation} where \(\la_{\ast}^{(3)}\) denotes \(\la\ast \la \ast \la.\)
  Let the functions \(f,g,h\in \mathbf{L}\) be given by \[ f(x)=\begin{cases} \max\{0,x-\la+v\},  &  x\in (\la\ast \la, \la),\\1, & x=\la,\\ 0, & \text{otherwise},\end{cases}\] \[g(x)=\begin{cases}
  -x+\la+1, & x\in(\la,1],\\0, &\text{otherwise,}  \end{cases}\] \[h(x)=\begin{cases} u,&x\in[\la,1),\\ 1,& x=1,\\0,& \text{otherwise.}\end{cases}\]

Claim 1, \(((f\ast_\va g)\ast_\va h)(\la_{\ast}^{(3)})\ge u\va v\). Since the restriction of \(\ast\) on \([\alpha,\beta]\) is  a t-norm isomorphic to \(T_p\) or \(T_L\), and \[\la\ast \la\in (\alpha,\beta),\] then there exists a  strictly increasing sequence \(\{x_n\}_{n\in \mathbb{N}} \)  and a strictly  decreasing  sequence \(\{y_n\}_{n\in \mathbb{N}}\) in \((\alpha,\beta)\), both of them converge to \(\la\), and for all \( n\in\mathbb{N}\)\begin{equation}\label{q10}x_n\ast y_n= \la\ast \la.\end{equation} 
It is easy to see that both  \(\{f(x_n)\}_{n\in \mathbb{N}}\) and \(\{g(x_n)\}_{n\in \mathbb{N}}\) are monotone increasing,  and  \[\lim_{n\rightarrow \infty} f(x_n)=f(\la^-)=v, \quad \lim_{n\rightarrow \infty} g(y_n)=g(\la^+)=1.\] Since \(\va\) is a border continuous t-norm, 
 \[\lim_{n\rightarrow \infty} f(x_n)\va g(y_n) = \lim_{n\rightarrow \infty} f(x_n) \va \lim_{n\rightarrow \infty} g(y_n)= v\va 1=v.\]
 This, together with  (\ref{q10}),  implies that
 \[(f\ast_\va g)(\la\ast \la)=\bigvee_{y\ast z=\la\ast \la} f(y)\va g(z)\ge \lim_{n\rightarrow \infty} f(x_n)\va g(y_n) =v.\] Hence, \[((f\ast_\va g)\ast_\va h)(\la_{\ast}^{(3)})\ge (f\ast_\va g)(\la\ast \la)\va h(\la)\ge v\va u.\]
 
Claim 2, \((f\ast_\va (g\ast_\va h))(\la_{\ast}^{(3)})\le u\va v^-\).    For this purpose,  we only need to  show that for all \(s,t\in [0,1]\)  satisfying \begin{equation}\label{e5} s\ast t=\la_{\ast}^{(3)},\end{equation}
the following inequality holds
  \begin{equation}\label{q3}f(s)\va (g\ast_\va h)(t)\le u\va v^-\end{equation}   case by case.

Case 1, \(s\notin (\la\ast \la, \la].\) Since \(f(s)=0,\) then   (\ref{q3}) holds.

Case 2,   \(s=\la.\)  Since  the restriction of \(\ast\) on \([\alpha,\beta]\) is  a conditionally  cancellative  t-norm, \(s\ast t=\la_{\ast}^{(3)},\) and \(\la_{\ast}^{(3)}\in(\alpha, \beta)\), then \[t=\la\ast \la.\] 
 Hence, to prove  (\ref{q3}),  we only need to show  \begin{equation}\label{q.10}(g\ast_\va h)(\la\ast \la)=0.\end{equation}
For all \(b,c\in  [0,1]\) satisfying \[  b\ast c =\la\ast \la,\]  if \(b\le \la\), then  \[g(b)\va h(c)=0\va h(c)=0,\]
if  \(b>\la,\)  since \(b\ast c =\la\ast \la\),  we have   \(c<\la\) from Lemma \ref{0.5}, and thus,  \[g(b)\va h(c)=g(b)\va 0=0.\]  In the above, we have shown that for all \(b,c\in  [0,1]\) satisfying \( b\ast c =\la\ast \la,\) \[g(b)\va h(c)=0.\]
Hence, (\ref{q.10}) holds.

Case 3,  \(s\in(\la\ast \la, \la)\). Since \[f(s)<f(\la^-)= v,\]  to prove (\ref{q3}), we only need to show  \begin{equation}(g\ast_\va h)(t)\le u.\label{q.11}\end{equation}   
Suppose that \(b,c\in  [0,1]\) and \[b\ast c=t.\]  If \(c<1\), then \(h(c)\le u\), hence  \[g(b)\va h(c)\le u.\]   If \(c=1\), then \(b=t\). In this case, on the one hand, \begin{equation}\label{e9}g(b)\va h(c)=g(t)\va h(1)=g(t)\va 1=g(t).\end{equation} On the other hand,  by (\ref{e3}) and (\ref{e5}),  \[s\ast t=\la_{\ast}^{(3)}\in (\alpha,\beta).\]  This, together with  \(s>\la\ast \la,\) implies  \(t<\la\) from Lemma \ref{0.5}. Due to \(t<\la\), \[g(t)=0.\] Therefore, by (\ref{e9}), \[g(b)\va h(c)=g(t)=0\le u.\]
In the above, we have shown that  for all \(b,c\in [0,1]\) with \(b\ast c=t\),    \[g(b)\va h(c)\le u.\] Hence,   (\ref{q.11}) holds.

By claim 1 and claim 2,  \[u\va v =((f\ast_\va g)\ast_\va h)(\la_{\ast}^{(3)})=(f\ast_\va (g\ast_\va h))(\la_{\ast}^{(3)})\le u\va v^-.\] Since \(\va\) is a t-norm, \(u\va v \ge u\va v^-.\) Hence, \(u\va v=u\va v^-.\)  Therefore,  \(\va\) is left-continuous.
\end{proof}

As a conclusion  of this section, we give the necessary conditions such that \(\ast_\va\) is  a t-norm  on \((\mathbf{L},\sq)\).

\begin{prop}\label{.21}Let \(\ast_\va\) be  a  t-norm on \((\mathbf{L},\sq)\).  Then 
 \(\ast\)  is a  continuous t-norm, and \(\va\) is a border continuous t-norm.
Besides,  \(\va\) is  left-continuous  whenever \(\ast\ne \wedge\). 
 \end{prop}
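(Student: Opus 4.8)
The plan is to assemble this statement directly from the three results already proved in this section, since each clause of the conclusion has been established separately. First I would invoke Proposition \ref{8}(i): under the hypothesis that \(\ast_\va\) is a t-norm on \((\mathbf{L},\sq)\), this immediately yields that \(\ast\) is a continuous t-norm and that \(\va\) is at least a t-norm. This settles the first assertion completely and, just as importantly, supplies the baseline fact that \(\va\) is a t-norm, which the two refinements below take for granted.

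Next I would upgrade the information about \(\va\). Proposition \ref{7} states precisely that if \(\ast_\va\) is a t-norm on \((\mathbf{L},\sq)\) then \(\va\) is border continuous, so the second assertion follows verbatim with no additional argument. For the final clause, I would cite Proposition \ref{0.21}, which asserts exactly that when one further assumes \(\ast\ne\wedge\), the operation \(\va\) is left-continuous. The condition ``\(\va\) left-continuous whenever \(\ast\ne\wedge\)'' is then just a restatement of that implication.

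Since every piece is a direct consequence of an earlier proposition, the only point to check is that the hypotheses line up: each of Propositions \ref{8}, \ref{7}, \ref{0.21} assumes merely that \(\ast_\va\) is a t-norm on \((\mathbf{L},\sq)\) (with \(\ast\ne\wedge\) added in the last), which matches the standing hypothesis here. Consequently no genuine obstacle arises at this final step; all of the real difficulty was absorbed into the preceding arguments — above all the intricate construction of the test functions \(f,g,h\) together with the three-way case analysis driving the left-continuity proof in Proposition \ref{0.21}, and the associativity-based comparison of \(((f\ast_\va g)\ast_\va h)\) with \((f\ast_\va(g\ast_\va h))\) used for border continuity in Proposition \ref{7}. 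This concluding proposition is therefore essentially bookkeeping, packaging the three necessary conditions on \(\ast\) and \(\va\) into a single statement that can be referenced as the converse target in the following section.
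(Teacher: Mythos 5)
Your proposal is correct and matches the paper's own proof exactly: the paper also derives this proposition by citing Propositions \ref{8}, \ref{7}, and \ref{0.21}, with no additional argument needed. The assembly you describe, including the observation that the hypotheses of those three propositions coincide with the standing hypothesis here, is precisely what the paper does.
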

\begin{proof} It follows from  Proposition \ref{8}, Proposition \ref{7} and  Proposition \ref{0.21}.
\end{proof}

\section {Sufficiency}
In this section, we   show that the necessary conditions given in Proposition \ref{.21}  is  also sufficient such that \(\ast_\va\) is a t-norm on \((\mathbf{L},\sq)\). Hence, we obtain the  necessary and sufficient  conditions as desired.  

\subsection{Closure}

In this subsection, we  give the sufficient conditions such that \(\ast_\va\) is closed on \(\mathbf{L}\).

\begin{lem}\label{9}
Suppose that \(\{A_i\}_{i\in  \Lambda}\) is a family of  convex subsets of  \([0,1]\).  If   \(A_i\cap A_j\ne \varnothing\) whenever \(A_i\ne \varnothing\) and \(A_j\ne \varnothing\),  then   \(\bigcup_{i\in  \Lambda} A_i\) is convex.
\end{lem}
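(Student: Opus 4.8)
The plan is to use the fact that a convex subset of $[0,1]$ is precisely an order-interval: a set $A$ is convex exactly when $x\le y\le z$ together with $x,z\in A$ forces $y\in A$. Accordingly, to prove that $U:=\bigcup_{i\in\Lambda}A_i$ is convex, I would fix $x,y,z\in[0,1]$ with $x\le y\le z$ and $x,z\in U$, and aim to show $y\in U$.

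Since $x,z\in U$, I would choose indices $i,j\in\Lambda$ with $x\in A_i$ and $z\in A_j$. Both $A_i$ and $A_j$ are then nonempty, so the hypothesis supplies a common point $w\in A_i\cap A_j$. This shared point $w$ is the key device of the argument: lying in both intervals, it serves as a bridge that lets me relocate $y$ into whichever of the two sets is appropriate. Note that the nonempty-intersection hypothesis is used precisely here, to produce $w$; without it neither $A_i$ nor $A_j$ need contain $y$.

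The final step is a dichotomy on the position of $w$ relative to $y$, using that $[0,1]$ is totally ordered so that $y\le w$ or $w\le y$ always holds. If $y\le w$, then from $x\le y\le w$ with $x,w\in A_i$ and convexity of $A_i$ I obtain $y\in A_i\subseteq U$. If instead $w\le y$, then from $w\le y\le z$ with $w,z\in A_j$ and convexity of $A_j$ I obtain $y\in A_j\subseteq U$. In either case $y\in U$, which completes the proof.

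I do not expect a serious obstacle: the content is essentially the observation that two intersecting intervals have a convex union, extended verbatim to an arbitrary pairwise-intersecting family. The only mild subtlety is recognizing that the case split on $w$ versus $y$ is exactly what converts a single common point into membership of $y$ in the union; once that is seen, the verification is routine.
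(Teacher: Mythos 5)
Your proof is correct and follows essentially the same route as the paper's: both arguments reduce the problem to the fact that two intersecting convex subsets of $[0,1]$ have convex union, using the pairwise-intersection hypothesis to produce the common point. The only difference is that the paper asserts this two-set fact without proof, whereas your case split on the position of $w$ relative to $y$ supplies its justification explicitly.
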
 
\begin{proof}
For any \(i,j\in \Lambda, x\in A_{i},y\in A_{j}\), 
since \(A_{i}\cap A_{j}\ne \varnothing\), and both \(A_i\) and \(A_j\) are convex, then 
\(A_{i}\cup A_{j}\) is convex, hence   \[[x\wedge y, x\vee y]\subseteq A_{i}\cup A_{j}\subseteq \bigcup_{i\in  \Lambda} A_i.\] Therefore,   \(\bigcup_{i\in  \Lambda} A_i\) is convex.
\end{proof}

\begin{prop}\label{.19}
 If  t-norm \(\ast\) is continuous  and t-norm \(\va\) is  continuous at \((1,1)\), then  \(\ast_\va\) is closed on \(\mathbf{L}\).
\end{prop}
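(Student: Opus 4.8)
The plan is to show that for arbitrary $f,g\in\mathbf{L}$ the function $f\ast_\va g$ is both normal and convex, which together give $f\ast_\va g\in\mathbf{L}=\mathbf{C}\cap\mathbf{N}$, so that $\ast_\va$ is closed on $\mathbf{L}$. I would treat the two properties separately: normality is routine and is the only place where the hypothesis that $\va$ is continuous at $(1,1)$ enters, while convexity is the substantive part and relies on the continuity of $\ast$.

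For normality, I would read off from the defining formula (\ref{1.1}) that $\sup_{x}(f\ast_\va g)(x)=\sup_{y,z}f(y)\va g(z)$. Since $f$ and $g$ are normal, I can choose sequences $y_n,z_n$ with $f(y_n)\to 1$ and $g(z_n)\to 1$; continuity of $\va$ at $(1,1)$ together with $1\va 1=1$ then forces $f(y_n)\va g(z_n)\to 1$, whence the supremum equals $1$ and $f\ast_\va g\in\mathbf{N}$.

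For convexity I would pass to strong cuts and invoke Proposition \ref{10}(iv): it suffices to prove $(f\ast_\va g)^{\hat{\alpha}}$ is convex for every $\alpha\in[0,1)$. As $\va$ is monotone, Proposition \ref{.14} gives $(f\ast_\va g)^{\hat{\alpha}}=\bigcup_{\alpha_1\va\alpha_2>\alpha}f^{\alpha_1}\ast g^{\alpha_2}$. Every index pair contributing to the union must satisfy $\alpha_1,\alpha_2>0$, since $0$ is absorbing for the t-norm $\va$; hence each cut $f^{\alpha_1},g^{\alpha_2}$ is convex by Proposition \ref{10}(iii). Because $\ast$ is continuous, $f^{\alpha_1}\ast g^{\alpha_2}$ is the continuous image of the connected set $f^{\alpha_1}\times g^{\alpha_2}$, hence a connected subset of $[0,1]$, i.e.\ convex. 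Thus $(f\ast_\va g)^{\hat{\alpha}}$ is a union of convex sets, and I would conclude by checking the hypothesis of Lemma \ref{9}.

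The main obstacle is precisely this last verification: that any two nonempty members $f^{\alpha_1}\ast g^{\alpha_2}$ and $f^{\beta_1}\ast g^{\beta_2}$ of the union meet. The key idea is to pass to the componentwise maxima $\gamma_1=\alpha_1\vee\beta_1$ and $\gamma_2=\alpha_2\vee\beta_2$. Since the cuts of a fixed function are nested, $f^{\gamma_1}$ is whichever of $f^{\alpha_1},f^{\beta_1}$ has the larger threshold and so is nonempty, and likewise $g^{\gamma_2}$ is nonempty; moreover monotonicity of $\va$ gives $\gamma_1\va\gamma_2\ge\alpha_1\va\alpha_2>\alpha$, so $f^{\gamma_1}\ast g^{\gamma_2}$ is itself a member of the union. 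From $f^{\gamma_1}\subseteq f^{\alpha_1}\cap f^{\beta_1}$ and $g^{\gamma_2}\subseteq g^{\alpha_2}\cap g^{\beta_2}$ one gets $\varnothing\ne f^{\gamma_1}\ast g^{\gamma_2}\subseteq(f^{\alpha_1}\ast g^{\alpha_2})\cap(f^{\beta_1}\ast g^{\beta_2})$, supplying the required common point. Lemma \ref{9} then yields convexity of $(f\ast_\va g)^{\hat{\alpha}}$, so $f\ast_\va g\in\mathbf{C}$ and the proof is complete.
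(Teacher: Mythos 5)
Your proof is correct and takes essentially the same route as the paper's: normality from continuity of \(\va\) at \((1,1)\), and convexity of the strong cuts \((f\ast_\va g)^{\hat{\alpha}}\) via Proposition \ref{.14} and Lemma \ref{9}, with the pairwise-intersection hypothesis verified by passing to the componentwise maxima of the thresholds, which is exactly the paper's argument. If anything you are slightly more careful than the paper, which leaves implicit the fact that each \(f^{\alpha_1}\ast g^{\alpha_2}\) is convex because it is the continuous image of the connected set \(f^{\alpha_1}\times g^{\alpha_2}\).
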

\begin{proof}
 For any \(f,g\in \mathbf{L},\)    \(f\ast_\va g\) is normal. In fact,
 for any \(a\in[0,1)\), since \(\va\) is continuous at \((1,1)\), then there exist  \(b_1,c_1\in[0,1)\) satisfying \(b_1\va c_1>a.\)  
 By Proposition \ref{.14},  \[(f\ast_\va g)^{\hat{a}}=\bigcup_{b\va c>a} f^{b}\ast g^{c}\supseteq f^{b_1}\ast g^{c_1}\ne \varnothing.\] Therefore, \(f\ast_\va g\) is normal.
 
 Next,  \(f\ast_\va g\) is convex.   
 Given any  \(a\in[0,1),\)  we claim that for all \(b_1,b_2,c_1,c_2\in[0,1],\)
 if \(b_1\va c_1>a\), \(b_2\va c_2>a,\)  and both \(f^{b_1}\ast g^{c_1}\) and \( f^{b_2}\ast g^{c_2}\) are non-empty,   then \((f^{b_1}\ast g^{c_1})\cap  (f^{b_2}\ast g^{c_2})\) is non-empty. In fact, on the one hand, \[f^{b_1}\ast g^{c_1}\supseteq (f^{b_1}\cap f^{b_2})\ast (g^{c_1}\cap g^{c_2})\]
 and
   \[f^{b_2}\ast g^{c_2}\supseteq (f^{b_1}\cap f^{b_2})\ast (g^{c_1}\cap g^{c_2})\]
imply that 
 \[( f^{b_1}\ast g^{c_1})\cap ( f^{b_2}\ast g^{c_2})\supseteq (f^{b_1}\cap f^{b_2})\ast (g^{c_1}\cap g^{c_2})= f^{b_1\vee b_2}\ast g^{c_1\vee c_2}.\]
 On the other hand, since  \(f^{b_1}\ast g^{c_1}\) and \( f^{b_2}\ast g^{c_2}\) are non-empty, all \(f^{b_i},  g^{c_i}, i=1,2\) are non-empty. Therefore,  \((f^{b_1}\ast g^{c_1})\cap  (f^{b_2}\ast g^{c_2})\) is non-empty as desired.

By Lemma \ref{9},  the above claim implies that   \[\bigcup_{b\va c>a} f^{b}\ast g^{c}=(f\ast_\va g)^{\hat{a}}\] is convex.
 By Proposition \ref{10}, \(f\ast_\va g\) is convex. Hence, \(f\ast_\va g\in\mathbf{L}\).
\end{proof}

\subsection{Monotone}
In this subsection, we  show the sufficient conditions such that \(\ast_\va:\mathbf{L}^2\to \mathbf{L}\) is monotone increasing in each component.

A non-empty  convex subset of  \([0,1]\) is called an \emph{subinterval} of \([0,1]\). Let \(\mathcal{I}\) denote the set of all subinterval of \([0,1]\).  Recall that \(\mathbf{I}^{[2]}\)  is the set of all closed subintervals  of \([0,1]\), hence \(\mathbf{I}^{[2]}\subsetneqq\mathcal{I}\).

Define the   relation \(\p\) on \(\mathcal{I}\)  as follows.  
   \[A\p B,\text { if } A\wedge B=A.\]  
 Recall that  \(A\wedge B=\{x\wedge y\mid x\in A, y\in B\}\).
   
 \begin{exmp}
 Let \(0\le a_1< b_1\le 1\) and \(0\le a_2< b_2\le 1\). If  \(a_1\le a_2, b_1\le b_2\), then \([a_1,b_1]\p [a_2, b_2]\) and  \((a_1,b_1)\p (a_2, b_2)\).
 \end{exmp}  

The following property  were provided by Hu and Wang \cite{hu142}. Here, we give a new proof.
\begin{prop}\label{16}
For all \(f,g\in \mathbf{L}\), \(f \sqsubseteq g\) if and only if \(f^{\hat{a}}\p g^{\hat{a}}\)  for all  \(a\in[0,1)\).
\end{prop}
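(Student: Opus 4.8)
The plan is to translate both sides of the claimed equivalence into the language of strong cuts and then to compare them. By definition $f\sq g$ means $f\wedge_{\wedge} g=f$, whereas $f^{\hat a}\p g^{\hat a}$ means $f^{\hat a}\wedge g^{\hat a}=f^{\hat a}$. So the whole statement will drop out once I establish the single identity
\[(f\wedge_{\wedge} g)^{\hat a}=f^{\hat a}\wedge g^{\hat a}\qquad\text{for all } a\in[0,1),\]
together with the cut-criterion for equality in $\mathbf{M}$.

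Before that I would check that the two relations $\p$ are actually applied to elements of $\mathcal{I}$. Since $f,g$ are normal, the strong cuts $f^{\hat a}$ and $g^{\hat a}$ are non-empty for every $a\in[0,1)$; since $f,g$ are convex, these cuts are convex by Proposition \ref{10}(iv). Hence $f^{\hat a},g^{\hat a}\in\mathcal{I}$ and $f^{\hat a}\p g^{\hat a}$ is meaningful.

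The central computation uses Proposition \ref{.14} with $\ast=\wedge$ and $\va=\wedge$ (both monotone increasing), which gives
\[(f\wedge_{\wedge} g)^{\hat a}=\bigcup_{\alpha_1\wedge\alpha_2>a} f^{\alpha_1}\wedge g^{\alpha_2}.\]
The key step, and the place that needs the most care, is to collapse this union to $f^{\hat a}\wedge g^{\hat a}$: Proposition \ref{.14} is phrased with the \emph{non-strict} cuts $f^{\alpha_i}$ on the right, while the conclusion concerns the \emph{strong} cut, so the passage between them must be argued by the two inclusions. For $\subseteq$, any element of $f^{\alpha_1}\wedge g^{\alpha_2}$ with $\alpha_1\wedge\alpha_2>a$ equals $y\wedge z$ with $f(y)\ge\alpha_1>a$ and $g(z)\ge\alpha_2>a$, so $y\in f^{\hat a}$ and $z\in g^{\hat a}$. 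For $\supseteq$, given $y\wedge z$ with $f(y)>a$ and $g(z)>a$, I take $\alpha_1=f(y)$ and $\alpha_2=g(z)$, both exceeding $a$, and conclude $y\wedge z\in f^{\alpha_1}\wedge g^{\alpha_2}$.

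Finally I would combine the pieces. By Proposition \ref{..5}(iii), $f\wedge_{\wedge} g=f$ holds if and only if $(f\wedge_{\wedge} g)^{\hat a}=f^{\hat a}$ for all $a\in[0,1)$. Substituting the identity above turns this into: $f\sq g$ if and only if $f^{\hat a}\wedge g^{\hat a}=f^{\hat a}$ for all $a\in[0,1)$, i.e.\ if and only if $f^{\hat a}\p g^{\hat a}$ for all $a\in[0,1)$, which is exactly the claim. I expect that everything apart from the termwise simplification of the union is a routine unwinding of definitions against the cited cut-equality criterion, so that simplification is the only real obstacle.
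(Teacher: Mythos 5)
Your proposal is correct and takes essentially the same route as the paper's own proof: both use Proposition \ref{.14} (with $\ast=\va=\wedge$) to establish the identity $(f\wedge_{\wedge} g)^{\hat{a}}=f^{\hat{a}}\wedge g^{\hat{a}}$ and then finish by the strong-cut equality criterion of Proposition \ref{..5} together with unwinding the definitions of $\sq$ and $\p$. The only difference is presentational: the paper collapses the union $\bigcup_{b\wedge c>a} f^{b}\wedge g^{c}$ in one displayed chain of equalities, whereas you justify it by the two explicit inclusions (and also verify that the strong cuts lie in $\mathcal{I}$, a detail the paper leaves implicit).
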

\begin{proof}
For any \(a\in[0,1)\),   \[(f\wedge_{\wedge} g)^{\hat{a}}=\bigcup_{b\wedge c>a} f^{b}\wedge g^c=(\bigcup_{b>a} f^{b})\wedge (\bigcup_{c>a} g^{c})=f^{\hat{a}}\wedge g^{\hat{a}},\]
thus \(f \sqsubseteq g\)\\
\(\Leftrightarrow\) \(f\wedge_{\wedge} g=f\)\\
\(\Leftrightarrow\) \((f\wedge_{\wedge} g)^{\hat{a}}=f^{\hat{a}}\) for all \(a\in[0,1)\)\\
\(\Leftrightarrow\) \(f^{\hat{a}}\wedge g^{\hat{a}}=f^{\hat{a}}\) for all \(a\in[0,1)\)\\
\(\Leftrightarrow\)   \(f^{\hat{a}}\p g^{\hat{a}}\)  for all \(a\in[0,1)\).
\end{proof}

\begin{defn}
For  any non-empty subset  \(A\)  of  \([0,1]\),  define   \[\ua A=\bigcup_{x\in A} [x,1]\] and \[\da A=\bigcup_{x\in A} [0,x].\] \end{defn}

Obviously, for each \(A\in \mathcal{I}\),  \(A=\ua A\cap \da A\).  Briefly,  we write  \(\da a\)  and \( \ua a\)  instead of  \(\da \{a\}\) and \(\ua \{a\}\), respectively. 
 
\begin{lem} \label{14}
For any  \(A, B\in \mathcal{I}\),  \(A\wedge B\in \mathcal{I}.\) Besides,  \(A=B\) if and only if \(\ua A= \ua B\) and \(\da A=\da B\).
\end{lem}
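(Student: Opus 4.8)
The statement splits into two essentially independent claims, and the plan is to handle them in order, devoting almost all the effort to the first. The second claim, that $A=B$ iff $\ua A=\ua B$ and $\da A=\da B$, is nearly immediate from the identity $A=\ua A\cap\da A$ recorded just above the lemma: the forward direction is trivial, and for the converse, assuming $\ua A=\ua B$ and $\da A=\da B$, I would simply apply that identity to both sides, so that $A=\ua A\cap\da A=\ua B\cap\da B=B$. Thus the real content is the first claim, that $\mathcal{I}$ is closed under $\wedge$.

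For the first claim, fix $A,B\in\mathcal{I}$ and write $A\wedge B=\{x\wedge y\mid x\in A,\ y\in B\}$. Non-emptiness is clear since $A$ and $B$ are non-empty, so the work is to show $A\wedge B$ is convex. The plan is to first locate its endpoints. From $x\ge\inf A$ and $y\ge\inf B$ one reads off $x\wedge y\ge \inf A\wedge\inf B$, and letting $x$ or $y$ approach the smaller of the two infima shows this bound is sharp; dually $x\wedge y\le\sup A\wedge\sup B$ with equality in the limit. Hence
\[
\inf(A\wedge B)=\inf A\wedge\inf B,\qquad \sup(A\wedge B)=\sup A\wedge\sup B.
\]
The structural fact I would then invoke is that a convex subset of $[0,1]$ contains the entire open interval between its infimum and supremum. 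So it suffices to show that every $w$ with $\inf(A\wedge B)<w<\sup(A\wedge B)$ lies in $A\wedge B$. Such a $w$ satisfies $w<\sup A$ and $w<\sup B$, and also $w>\inf A$ or $w>\inf B$; in the first case $\inf A<w<\sup A$ forces $w\in A$ by convexity of $A$, while $w<\sup B$ lets me pick $y\in B$ with $y>w$, giving $w=w\wedge y\in A\wedge B$, and the second case is symmetric. This is exactly the membership needed to conclude that $A\wedge B$ is convex, hence $A\wedge B\in\mathcal{I}$. As a cleaner alternative I could observe that $A\wedge B$ is the image of the connected set $A\times B$ under the continuous map $(x,y)\mapsto x\wedge y$, hence connected, hence an interval; but I would likely keep the elementary computation above to stay within the order-theoretic setting of the paper.

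The only genuinely delicate point in the whole lemma is the convexity argument of the first claim, and specifically the bookkeeping at the endpoints. Since an interval in $\mathcal{I}$ need not contain its infimum or supremum, I must phrase everything in terms of the open interval $(\inf,\sup)$ and verify membership only for values \emph{strictly} between two elements of $A\wedge B$; values equal to an endpoint are already elements of $A\wedge B$ and need no argument. Getting this distinction right is precisely what the definition of convexity demands, and it is where I expect the main obstacle to lie, whereas the endpoint formulas and the second claim are routine.
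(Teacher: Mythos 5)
Your proof is correct, but there is nothing in the paper to compare it against: Lemma \ref{14} is stated without any proof, evidently regarded as routine (the identity \(A=\ua A\cap \da A\) that your second part relies on is recorded by the paper, also without proof, immediately before the lemma). So your proposal's real role is to fill this gap, and it does so validly. Your treatment of the equivalence \(A=B \Leftrightarrow (\ua A=\ua B \text{ and } \da A=\da B)\) is exactly the intended one-liner. For closure of \(\mathcal{I}\) under \(\wedge\), your argument is sound, and in fact slightly more than is needed: only the easy inequalities \(\inf(A\wedge B)\ge \inf A\wedge \inf B\) and \(\sup(A\wedge B)\le \sup A\wedge \sup B\) enter your main step, so the sharpness computations are harmless but dispensable. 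Your case split (\(w>\inf A\) or \(w>\inf B\), together with \(w<\sup A\) and \(w<\sup B\)) correctly exhibits \(w\) as a minimum of an element of \(A\) with an element of \(B\), and your insistence on working with the open interval \((\inf,\sup)\) is the right way to handle the fact that elements of \(\mathcal{I}\) need not contain their endpoints. For reference, a marginally more direct route avoids \(\inf\) and \(\sup\) altogether: given \(x_1\wedge y_1\le w\le x_2\wedge y_2\) with \(x_i\in A\), \(y_i\in B\), assume by symmetry \(x_1\le y_1\); then \(x_1\le w\le x_2\wedge y_2\le x_2\) forces \(w\in A\) by convexity of \(A\), and \(w\le y_2\) gives \(w=w\wedge y_2\in A\wedge B\). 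Either way, your proof is complete and consistent with how the lemma is used later (in the proof of Lemma \ref{12}).
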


\begin{lem}\label{11} For any \(A, B\in \mathcal{I}\),  \[\ua (A\wedge B)=\ua A\cup \ua B, \quad \da (A\wedge B)=\da A\cap \da B.\]
\end{lem}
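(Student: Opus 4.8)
The plan is to prove each of the two identities by double inclusion, exploiting the elementary fact that $x \wedge y$ always coincides with one of $x$ or $y$. Throughout I would use that $A \wedge B$ is non-empty (since $A$ and $B$ are), so that $\ua(A \wedge B)$ and $\da(A \wedge B)$ are defined, together with $A \wedge B \in \mathcal{I}$ from Lemma \ref{14}.

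For the downward identity $\da(A \wedge B) = \da A \cap \da B$, the argument runs directly off the definitions. For $\subseteq$, any $z \in \da(A \wedge B)$ satisfies $z \le x \wedge y$ for some $x \in A$, $y \in B$; then $z \le x$ and $z \le y$ give $z \in \da A$ and $z \in \da B$. Conversely, if $z \in \da A \cap \da B$, I would choose $x \in A$ with $z \le x$ and $y \in B$ with $z \le y$; then $z \le x \wedge y \in A \wedge B$, so $z \in \da(A \wedge B)$.

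For the upward identity $\ua(A \wedge B) = \ua A \cup \ua B$, the case split on $x \wedge y$ does the work. For $\subseteq$, if $z \in \ua(A \wedge B)$ then $z \ge x \wedge y$ for some $x \in A$, $y \in B$; since $x \wedge y$ is either $x$ or $y$, we obtain $z \ge x$ (so $z \in \ua A$) or $z \ge y$ (so $z \in \ua B$). For $\supseteq$, given say $z \in \ua A$ with $z \ge x$, $x \in A$, I would pick any $y \in B$; then $x \wedge y \le x \le z$ with $x \wedge y \in A \wedge B$, whence $z \in \ua(A \wedge B)$, and symmetrically for $z \in \ua B$.

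The arguments are purely formal, so there is no genuine obstacle beyond bookkeeping. The only step requiring a moment's attention is the $\supseteq$ direction of the upward identity, where one must invoke the non-emptiness of the \emph{other} set in order to produce a witness in $A \wedge B$; this is precisely where the hypothesis $A, B \in \mathcal{I}$ (in particular that both are non-empty) is used.
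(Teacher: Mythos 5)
Your proof is correct and follows essentially the same route as the paper: the paper's argument is a chain of biconditionals whose key step is precisely your observation that \(z \ge x\wedge y\) iff \(z\ge x\) or \(z\ge y\) (with non-emptiness of the other set implicitly supplying the witness), and it likewise handles \(\da\) ``similarly.'' Your double-inclusion write-up, including the explicit remark on where non-emptiness of \(A,B\in\mathcal{I}\) is used, is just a more carefully spelled-out version of the same argument.
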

\begin{proof}
\(x\in \ua (A\wedge B).\) \\
\(\Leftrightarrow\) there exist \(a\in A, b\in B\) such that \(x\ge a\wedge b.\)\\
\(\Leftrightarrow\) there exist \(a\in A, b\in B\) such that \(x\ge a\) or \(x\ge b.\)\\
\(\Leftrightarrow\) \(x\in \ua A\) or  \(x\in \ua B\).\\
\(\Leftrightarrow\) \(x\in \ua A\cup \ua B\).

Similarly,  \(\da (A\wedge B)=\da A\cap \da B\).
\end{proof}

\begin{lem}\label{12} For \(A, B\in \mathcal{I}\), \(A\p  B\) 
if and only if \( \ua B \subseteq \ua A\) and \( \da A \subseteq \da B.\) 
\end{lem}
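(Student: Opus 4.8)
The plan is to reduce the statement about $\p$ to the two simpler equations characterizing the up-set $\ua$ and down-set $\da$ of a meet, and then translate the resulting set equalities into the desired inclusions. By definition $A\p B$ means $A\wedge B=A$. Since Lemma \ref{14} guarantees $A\wedge B\in\mathcal{I}$, I can invoke the equality criterion from the same lemma: two subintervals coincide precisely when their $\ua$ and their $\da$ agree. Hence $A\p B$ is equivalent to the conjunction $\ua(A\wedge B)=\ua A$ and $\da(A\wedge B)=\da A$.

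Next I would substitute the formulas from Lemma \ref{11}, namely $\ua(A\wedge B)=\ua A\cup\ua B$ and $\da(A\wedge B)=\da A\cap\da B$. This turns the two equalities into $\ua A\cup\ua B=\ua A$ and $\da A\cap\da B=\da A$.

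Finally, these are elementary set-theoretic identities: a union equals its first term iff the second is contained in the first, so $\ua A\cup\ua B=\ua A$ holds iff $\ua B\subseteq\ua A$; and an intersection equals its first term iff the first is contained in the second, so $\da A\cap\da B=\da A$ holds iff $\da A\subseteq\da B$. Combining the two gives exactly $\ua B\subseteq\ua A$ and $\da A\subseteq\da B$, which is the claimed characterization, and since every step is an equivalence the converse direction is automatic.

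There is essentially no obstacle here: the whole argument is a chain of equivalences that feeds Lemma \ref{14} and Lemma \ref{11} into each other, supplemented by two trivial facts about unions and intersections. The only point deserving a moment's care is checking that $A\wedge B$ is genuinely a subinterval, so that the equality criterion of Lemma \ref{14} is applicable; but this is precisely the first assertion of Lemma \ref{14}.
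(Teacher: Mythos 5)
Your proof is correct and follows exactly the paper's own argument: unfolding $A\p B$ to $A\wedge B=A$, applying the equality criterion of Lemma \ref{14} (after noting $A\wedge B\in\mathcal{I}$), substituting the identities of Lemma \ref{11}, and reducing to the elementary facts about unions and intersections. No differences worth noting.
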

\begin{proof}

  \(A\p B\).\\
\(\Leftrightarrow\) \(A\wedge B=A\).\\
\(\Leftrightarrow\)  \(\ua (A\wedge B)= \ua A\) and  \(\da (A\wedge B)= \da A\) (by Lemma \ref{14}).  \\
\(\Leftrightarrow\)  \(\ua A\cup \ua B=\ua A\) and \(\da A\cap \da B=\da A\) (by Lemma \ref{11}).\\
\(\Leftrightarrow\) \( \ua B \subseteq \ua A\) and \( \da A \subseteq \da B.\) 
\end{proof}

\begin{lem} \label{13} Suppose that  \(\{A_i\}_{i\in \Lambda}\) is  a family of  subinterval of \( [0,1]\).  Then  \[\ua (\bigcup_{i\in \Lambda} A_i)=\bigcup_{i\in \Lambda} \ua A_i,\quad \da (\bigcup_{i\in \Lambda} A_i)=\bigcup_{i\in \Lambda} \da A_i .\]  Furthermore, if   \(\bigcap_{i\in \Lambda} A_i\ne \varnothing,\) then
 \[\ua (\bigcap_{i\in \Lambda} A_i)=\bigcap_{i\in \Lambda} \ua A_i,\quad \da (\bigcap_{i\in \Lambda} A_i)=\bigcap_{i\in \Lambda} \da A_i \] 
\end{lem}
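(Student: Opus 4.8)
The plan is to handle the two union identities and the two intersection identities separately, observing that the union identities are purely set-theoretic (no convexity needed) while the intersection identities genuinely require both convexity and the non-emptiness hypothesis.

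First I would dispose of the union identities by directly reindexing the defining double union. Since $\ua A=\bigcup_{x\in A}[x,1]$, one has
\[\ua\Bigl(\bigcup_{i\in\Lambda}A_i\Bigr)=\bigcup_{x\in\bigcup_{i\in\Lambda}A_i}[x,1]=\bigcup_{i\in\Lambda}\;\bigcup_{x\in A_i}[x,1]=\bigcup_{i\in\Lambda}\ua A_i,\]
and the identity for $\da$ is obtained in exactly the same way. These say merely that the up- and down-closure operators commute with arbitrary unions, and convexity plays no role.

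For the intersection identities, the easy inclusion is monotonicity: from $\bigcap_{i\in\Lambda}A_i\subseteq A_j$ for every $j$ and the evident monotonicity of $\ua$ (if $A\subseteq B$ then $\ua A\subseteq\ua B$, directly from the definition), one gets $\ua(\bigcap_{i\in\Lambda}A_i)\subseteq\bigcap_{j\in\Lambda}\ua A_j$, and dually for $\da$. The reverse inclusion $\bigcap_{i\in\Lambda}\ua A_i\subseteq\ua(\bigcap_{i\in\Lambda}A_i)$ is where the real argument lies, and I would proceed as follows. Using the hypothesis $\bigcap_{i\in\Lambda}A_i\ne\varnothing$, fix an anchor point $p\in\bigcap_{i\in\Lambda}A_i$, and take any $y\in\bigcap_{i\in\Lambda}\ua A_i$. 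If $y\ge p$, then $y\in\ua(\bigcap_{i\in\Lambda}A_i)$, witnessed by $p$. Otherwise $y<p$; for each $i$ the membership $y\in\ua A_i$ supplies some $x_i\in A_i$ with $x_i\le y$, and then $x_i\le y<p$ with $x_i,p\in A_i$ forces $y\in A_i$ by convexity of $A_i$. Since this holds for every $i$, we obtain $y\in\bigcap_{i\in\Lambda}A_i\subseteq\ua(\bigcap_{i\in\Lambda}A_i)$. The statement for $\da$ then follows from the order-reversing symmetry $x\mapsto 1-x$, which is a bijection of $[0,1]$ interchanging $\ua$ and $\da$ and commuting with unions and intersections.

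The main obstacle is exactly this reverse inclusion for intersections: unlike the union case it cannot be reduced to reindexing, and it is precisely here that one must exploit convexity (to push a witness $x_i$ up across the threshold $y$ toward the anchor $p$) together with the standing assumption that the intersection is inhabited (to produce the anchor $p$ in the first place). Everything else is routine bookkeeping with the definitions of $\ua$ and $\da$.
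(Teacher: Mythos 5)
Your proof is correct and follows essentially the same route as the paper: the union identities by reindexing, the easy inclusion for intersections by monotonicity, and the reverse inclusion via an anchor point $p\in\bigcap_{i\in\Lambda}A_i$ together with convexity of each $A_i$ — the paper phrases this last step as a proof by contradiction, but the underlying argument (squeezing $y$ between a witness $x_i\in A_i$ and the anchor) is identical, and your $x\mapsto 1-x$ symmetry for $\da$ is just the paper's ``dually'' made explicit.
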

\begin{proof}
 It is trivial that \[\ua (\bigcup_{i\in \Lambda} A_i)=\bigcup_{i\in \Lambda} \ua A_i,\quad \da (\bigcup_{i\in \Lambda} A_i)=\bigcup_{i\in \Lambda} \da A_i .\]  
In the following,  we shall  show that  if \(\bigcap_{i\in \Lambda} A_i\ne \varnothing,\)  then  \[\ua (\bigcap_{i\in \Lambda} A_i)=\bigcap_{i\in \Lambda} \ua A_i.\]
 Obviously, \[\ua  (\bigcap_{i\in \Lambda} A_i)\subseteq \bigcap_{i\in \Lambda} \ua A_i.\] 
 Conversely,  suppose \[ \bigcap_{i\in \Lambda} \ua A_i\nsubseteq \ua  (\bigcap_{i\in \Lambda} A_i),\] i.e. there exists \(x_0\in  \bigcap_{i\in \Lambda} \ua A_i\) satisfying \(x_0\notin \ua  (\bigcap_{i\in \Lambda} A_i).\)
In this case, given any \(a_0\in \bigcap_{i\in \Lambda} A_i\), since \(x_0\notin \ua  (\bigcap_{i\in \Lambda} A_i),\) then \(x_0<a_0\). For each \(i\in \Lambda\),  since \(x_0\in \bigcap_{j\in \Lambda} \ua A_j\subseteq \ua A_i\),  there exists \(a_i\in A_i\) such that  \(x_0\ge a_i\). 
Since \(a_i\le x_0< a_0\), where both \(a_i\) and \(a_0\) belong to  the convex set \(A_i\), then \(x_0\in A_i\). Hence, \(x_0\in \bigcap_{i\in \Lambda} A_i\), which is contradict to the assumption  \(x_0\notin \ua  (\bigcap_{i\in \Lambda} A_i).\) Therefore, \[ \bigcap_{i\in \Lambda} \ua A_i\subseteq \ua  (\bigcap_{i\in \Lambda} A_i),\]  thus \[ \bigcap_{i\in \Lambda} \ua A_i= \ua  (\bigcap_{i\in \Lambda} A_i).\] 

Dually, if \(\bigcap_{i\in \Lambda} A_i\ne \varnothing,\) then  \[\da (\bigcap_{i\in \Lambda} A_i)=\bigcap_{i\in \Lambda} \da A_i.\]
 \end{proof}

\begin{prop}\label{15}
Suppose that  \(\{A_i\}_{i\in \Lambda}\) and   \(\{B_i\}_{i\in \Lambda}\)  are two families of  subintervals of \([0,1]\), and  \(A_i\p B_i\) for each \(i\in \Lambda\).  If both \(\bigcup_{i\in \Lambda}{A_i}\) and \(\bigcup_{i\in \Lambda}{B_i}\) are convex, then \[\bigcup_{i\in \Lambda} A_i \p  \bigcup_{i\in \Lambda} B_i\]
 If  both \(\bigcap_{i\in \Lambda}A_i\) and  \(\bigcap_{i\in \Lambda}B_i\) are non-empty, then  \[\bigcap_{i\in \Lambda}A_i\p  \bigcap_{i\in \Lambda} B_i.\]
 \end{prop}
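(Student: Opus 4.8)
The plan is to translate the order \(\p\) into inclusions between the associated up-sets \(\ua(-)\) and down-sets \(\da(-)\) by means of Lemma~\ref{12}, and then to exploit the way \(\ua\) and \(\da\) distribute over unions and over non-empty intersections (Lemma~\ref{13}), combined with the elementary monotonicity of \(\bigcup\) and \(\bigcap\) under inclusion. Crucially, \(A_i\p B_i\) already gives, by Lemma~\ref{12}, the termwise inclusions \(\ua B_i\subseteq\ua A_i\) and \(\da A_i\subseteq\da B_i\), and these are the raw material for both assertions.

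First I would treat the union part. Note that \(\bigcup_{i}A_i\) and \(\bigcup_{i}B_i\) lie in \(\mathcal{I}\): they are convex by hypothesis and non-empty because each \(A_i,B_i\) is a subinterval. Hence Lemma~\ref{12} applies, and the desired relation \(\bigcup_i A_i\p\bigcup_i B_i\) is equivalent to the pair of inclusions \(\ua(\bigcup_i B_i)\subseteq\ua(\bigcup_i A_i)\) and \(\da(\bigcup_i A_i)\subseteq\da(\bigcup_i B_i)\). Using the union formulas of Lemma~\ref{13}, these become \(\bigcup_i\ua B_i\subseteq\bigcup_i\ua A_i\) and \(\bigcup_i\da A_i\subseteq\bigcup_i\da B_i\), which follow immediately from the termwise inclusions above by taking unions.

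The intersection part runs in exact parallel. Since \(\bigcap_i A_i\) and \(\bigcap_i B_i\) are assumed non-empty and are convex, being intersections of convex sets, they too belong to \(\mathcal{I}\); so Lemma~\ref{12} reduces \(\bigcap_i A_i\p\bigcap_i B_i\) to \(\ua(\bigcap_i B_i)\subseteq\ua(\bigcap_i A_i)\) and \(\da(\bigcap_i A_i)\subseteq\da(\bigcap_i B_i)\). Here I would invoke the non-empty-intersection clauses of Lemma~\ref{13}, which require precisely the standing hypotheses \(\bigcap_i A_i\ne\varnothing\) and \(\bigcap_i B_i\ne\varnothing\), to rewrite these as \(\bigcap_i\ua B_i\subseteq\bigcap_i\ua A_i\) and \(\bigcap_i\da A_i\subseteq\bigcap_i\da B_i\); the termwise inclusions then yield these by taking intersections.

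I do not expect a serious obstacle once Lemmas~\ref{12} and~\ref{13} are available; the whole argument is a transfer across the \(\ua/\da\) description of \(\p\). The only points demanding care are verifying that the combined sets genuinely lie in \(\mathcal{I}\), so that Lemma~\ref{12} is legitimately applicable, and observing that the non-emptiness hypothesis on the intersections is exactly what licenses the intersection formulas of Lemma~\ref{13}. Everything else is routine bookkeeping with set inclusions.
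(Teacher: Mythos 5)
Your proposal is correct and follows essentially the same route as the paper's own proof: both translate \(\p\) into inclusions of \(\ua\)/\(\da\) sets via Lemma~\ref{12}, pass the termwise inclusions through the union (resp.\ non-empty intersection) formulas of Lemma~\ref{13}, and convert back with Lemma~\ref{12}. Your explicit check that the combined sets lie in \(\mathcal{I}\) is a small point the paper leaves implicit, but it does not change the argument.
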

 
\begin{proof}
 By Lemma \ref{12}, for any \(i\in\Lambda\),  \[\ua B_i\subseteq  \ua A_i,\quad \da A_i\subseteq  \da B_i.\] Hence, \begin{equation}\label{-9} \bigcup_{i\in \Lambda}\ua B_i\subseteq \bigcup_{i\in \Lambda}\ua A_i,\quad  \bigcup_{i\in \Lambda}\da A_i\subseteq \bigcup_{i\in \Lambda}\da B_i.\end{equation}
If both \(\bigcup_{i\in \Lambda}{A_i}\) and \(\bigcup_{i\in \Lambda}{B_i}\) are convex, by (\ref{-9})  and Lemma \ref{13},   \[\ua( \bigcup_{i\in \Lambda}B_i)\subseteq \ua (\bigcup_{i\in \Lambda}A_i),\quad \da (\bigcup_{i\in \Lambda}A_i)\subseteq \da (\bigcup_{i\in \Lambda}B_i).\] This, together with  Lemma \ref{12}, implies \[\bigcup_{i\in \Lambda}A_i\p  \bigcup_{i\in \Lambda} B_i.\]
Similarly,  if both \(\bigcap_{i\in \Lambda}A_i\) and  \(\bigcap_{i\in \Lambda}B_i\) are non-empty, then \[\bigcap_{i\in \Lambda}A_i\p  \bigcap_{i\in \Lambda} B_i.\]
\end{proof}

\begin{prop}\label{17}
For all \(f,g\in \mathbf{L}\), 
 the following statements are equivalent.
\begin{itemize}
\item[(i)] \(f \sqsubseteq g.\)
\item[(ii)]  \(f^{\hat{a}}\p g^{\hat{a}}\)  for all  \(a\in(0,1)\).
\item[(iii)] \(f^{a}\p g^{a}\)  for all  \(a\in(0,1)\).
\end{itemize}
\end{prop}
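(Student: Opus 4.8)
The plan is to bootstrap from Proposition~\ref{16}, which already identifies (i) with the \emph{a priori} stronger assertion that $f^{\hat a}\p g^{\hat a}$ for all $a\in[0,1)$; call this condition (ii$'$). Since (ii$'$) differs from (ii) only by the inclusion of the endpoint $a=0$, the equivalence (i)$\Leftrightarrow$(ii) reduces to recovering the support cut $f^{\hat 0}\p g^{\hat 0}$ from the cuts at parameters $a\in(0,1)$. The remaining equivalence (ii)$\Leftrightarrow$(iii) is the passage between strong and ordinary $\alpha$-cuts, for which I would exploit the two pointwise identities
\[
f^{a}=\bigcap_{0<b<a} f^{\hat b},\qquad f^{\hat b}=\bigcup_{b<a<1} f^{a},
\]
valid for every $f\in\mathbf{M}$, together with the compatibility of $\p$ with unions and intersections recorded in Proposition~\ref{15}.

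For (i)$\Leftrightarrow$(ii): one direction is immediate, since (ii$'$) trivially entails (ii). For the converse I would assume (ii) and manufacture the missing case $a=0$. Because $f,g\in\mathbf{L}$ are convex, Proposition~\ref{10} guarantees that each strong cut is a subinterval, and one checks $f^{\hat 0}=\bigcup_{a\in(0,1)}f^{\hat a}$ and likewise for $g$, both unions being convex. As $f^{\hat a}\p g^{\hat a}$ for every $a\in(0,1)$ by hypothesis, the union clause of Proposition~\ref{15} yields $f^{\hat 0}\p g^{\hat 0}$. Thus (ii) upgrades to (ii$'$), and Proposition~\ref{16} delivers (i).

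For (ii)$\Leftrightarrow$(iii): to prove (ii)$\Rightarrow$(iii), fix $a\in(0,1)$; normality of $f$ and $g$ forces $f^{a}$ and $g^{a}$ to be non-empty (as $\sup f=\sup g=1>a$), and the identity $f^{a}=\bigcap_{0<b<a}f^{\hat b}$ exhibits both ordinary cuts as non-empty intersections of the strong cuts occurring in (ii); the intersection clause of Proposition~\ref{15} then gives $f^{a}\p g^{a}$. For the reverse implication, fix $b\in(0,1)$; the identity $f^{\hat b}=\bigcup_{b<a<1}f^{a}$ presents the strong cuts as unions of the ordinary cuts from (iii), these unions being convex by Proposition~\ref{10}, so the union clause of Proposition~\ref{15} gives $f^{\hat b}\p g^{\hat b}$, which is (ii).

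The routine parts will be the two pointwise cut identities and the bookkeeping of index ranges — in particular, observing that dropping $a=1$ from the union and $b=0$ from the intersection is harmless, since $f^{1}\subseteq f^{a}$ for $a<1$ and $f^{\hat b}\subseteq f^{\hat 0}$ for $b>0$. The one place that demands genuine care, and hence the main obstacle, is verifying the hypotheses of Proposition~\ref{15} at each invocation: convexity of the relevant unions (which rests on $f,g$ being convex) for the union clause, and non-emptiness of the relevant intersections (which rests on $f,g$ being normal) for the intersection clause. Once these two structural features of $\mathbf{L}$ are matched to the correct clause, the argument closes.
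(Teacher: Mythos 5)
Your proposal is correct and uses essentially the same machinery as the paper's proof: Proposition~\ref{16}, the cut identities \(f^{a}=\bigcap_{b\in(0,a)}f^{\hat{b}}\) and \(f^{\hat{a}}=\bigcup_{b\in(a,1)}f^{b}\), and the union/intersection clauses of Proposition~\ref{15}, with normality and convexity supplying the non-emptiness and convexity hypotheses. The only difference is organizational — the paper closes a cycle \((i)\Rightarrow(ii)\Rightarrow(iii)\Rightarrow(i)\) (recovering the \(a=0\) strong cut from the ordinary cuts \(\bigcup_{b\in(0,1)}f^{b}\)), whereas you prove \((i)\Leftrightarrow(ii)\) and \((ii)\Leftrightarrow(iii)\) separately and recover \(f^{\hat{0}}\) from the strong cuts instead — which is cosmetic.
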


\begin{proof}
\((i)\Rightarrow (ii)\). It follows from  Proposition \ref{16} immediately.

\((ii)\Rightarrow (iii)\). For any \(a\in (0,1)\), by Proposition  \ref{15},  \[f^a=\bigcap_{b\in (0,a)} f^{\hat{b}}\p\bigcap_{b\in (0,a)} g^{\hat{b}}=g^a.\]

\((iii)\Rightarrow (i)\). For any \(a\in[0,1)\), by Proposition \ref{15}, 
 \[f^{\hat{a}}=\bigcup_{b\in (a,1)} f^{b}\p\bigcup_{b\in (a,1)} g^{b}=g^{\hat{a}}.\]
By Proposition \ref{16},  \(f \sqsubseteq g\).
\end{proof}

\begin{lem} \label{18} If   \(\ast\)  is a continuous t-norm, then for \(A, B,C\in \mathcal{I}\),  \(A\p B\) implies  \(A\ast C\p B\ast C\).
\end{lem}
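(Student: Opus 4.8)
The plan is to translate the relation $\p$ into the inclusions of up-sets and down-sets furnished by Lemma \ref{12}, after which the statement splits into two short monotonicity checks. First, though, I would settle the preliminary point on which continuity is really used: that $A\ast C$ and $B\ast C$ lie in $\mathcal I$. Since $A$ and $C$ are convex, $A\times C$ is a convex, hence connected, subset of $[0,1]^2$, and $A\ast C=\{x\ast y\mid x\in A,\ y\in C\}$ is its image under the continuous map $\ast$; a continuous image of a connected set is connected, connected subsets of $[0,1]$ are precisely the subintervals, and the image is non-empty because $A,C\ne\varnothing$. Hence $A\ast C\in\mathcal I$, and the same holds for $B\ast C$.

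Granting closure, I would invoke Lemma \ref{12} on both sides, so that $A\ast C\p B\ast C$ becomes equivalent to the pair of inclusions
\[\ua(B\ast C)\subseteq\ua(A\ast C),\qquad \da(A\ast C)\subseteq\da(B\ast C).\]
The hypothesis $A\p B$, read through Lemma \ref{12}, supplies $\ua B\subseteq\ua A$ and $\da A\subseteq\da B$, which unwind to the two facts: each $b\in B$ dominates some $a\in A$, and each $a\in A$ is dominated by some $b\in B$.

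For the first inclusion I would take $w\in\ua(B\ast C)$, write $w\ge b\ast c$ with $b\in B$, $c\in C$, choose $a\in A$ with $a\le b$, and conclude $a\ast c\le b\ast c\le w$ by monotonicity of the t-norm, so that $w\in\ua(A\ast C)$. The second inclusion is entirely dual: for $w\in\da(A\ast C)$ one has $w\le a\ast c$ with $a\in A$, $c\in C$, and choosing $b\in B$ with $a\le b$ gives $w\le a\ast c\le b\ast c$, whence $w\in\da(B\ast C)$. A final appeal to Lemma \ref{12} then delivers $A\ast C\p B\ast C$.

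I expect the only genuine obstacle to be the closure step: the two inclusions rest on monotonicity alone, but both the definition of $\p$ and the up-set/down-set criterion of Lemma \ref{12} are meaningful only for elements of $\mathcal I$, so the whole argument stands or falls on the fact, guaranteed precisely by continuity of $\ast$, that $A\ast C$ and $B\ast C$ are again subintervals.
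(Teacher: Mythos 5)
Your proof is correct, and it takes a slightly but genuinely different route from the paper's. Both arguments funnel through Lemma \ref{12}, translating the hypothesis \(A\p B\) and the desired conclusion \(A\ast C\p B\ast C\) into inclusions of up-sets and down-sets. The paper's middle step, however, is the pair of factorization identities \(\ua(A\ast C)=\ua A\ast \ua C\) and \(\da(A\ast C)=\da A\ast \da C\), asserted without proof; the nontrivial halves of these identities (e.g.\ \(\ua(A\ast C)\subseteq \ua A\ast \ua C\)) are precisely where continuity of \(\ast\) enters, via an intermediate value argument. You bypass those identities entirely: your two inclusions are verified elementwise using only monotonicity of the t-norm, and continuity is confined to the closure step, where you show \(A\ast C,\,B\ast C\in\mathcal{I}\) as continuous images of connected sets. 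That closure step is genuinely required — the relation \(\p\) and the criterion of Lemma \ref{12} are only defined on \(\mathcal{I}\) — and the paper's proof passes over it silently, so your version fills a real gap while also locating more precisely what continuity is for. One further point in your favor: your reading of Lemma \ref{12} (\(A\p B\) yields \(\ua B\subseteq \ua A\) and \(\da A\subseteq \da B\)) is the one consistent with that lemma as stated; the paper's proof of Lemma \ref{18} writes these inclusions (and the subsequent displayed ones) with the directions reversed, which taken literally would prove \(B\ast C\p A\ast C\) instead, and is evidently a propagated typo.
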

\begin{proof}  If \(A\p B\), then \(\ua A\subseteq  \ua B\) and \(\da B\subseteq \da A,\)
hence, \[\ua(A\ast C)=\ua A\ast \ua C\subseteq  \ua B\ast \ua C= \ua(B\ast C),\] 
\[\da(B\ast C)=\da B\ast \da C\subseteq  \da A\ast \da C= \da(A\ast C).\] Therefore, \(A\ast C\p B\ast C\). 
\end{proof}
\begin{prop}\label{.23}
Suppose that \(\ast\) is  a  continuous  t-norm    and \(\va\) is  a border continuous t-norm. Then for all  \(f_1, f_2, g\in \mathbf{L},\) \(f_1\sqsubseteq  f_2\) implies \( f_1\ast_\va g\sqsubseteq  f_2\ast_\va g\).
\end{prop}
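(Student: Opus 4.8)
The plan is to work entirely at the level of strong $\alpha$-cuts and reduce the claim, via Proposition \ref{17}, to a containment of cuts. By Proposition \ref{17}, establishing $f_1\ast_\va g\sq f_2\ast_\va g$ (note both functions lie in $\mathbf{L}$ by the closure statement of Proposition \ref{.19}, since border continuity of $\va$ forces continuity at $(1,1)$) amounts to showing that $(f_1\ast_\va g)^{\hat a}\p(f_2\ast_\va g)^{\hat a}$ for every $a\in(0,1)$. Fix such an $a$. Since $\va$ is a t-norm, hence increasing in each place, Proposition \ref{.14} rewrites both cuts as unions:
\[(f_i\ast_\va g)^{\hat a}=\bigcup_{\alpha_1\va\alpha_2>a} f_i^{\alpha_1}\ast g^{\alpha_2},\qquad i=1,2.\]

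First I would trim the index set to $\Lambda=\{(\alpha_1,\alpha_2)\in(0,1)^2:\alpha_1\va\alpha_2>a\}$. Because $\alpha_1\va\alpha_2\le\alpha_1\wedge\alpha_2$, the constraint already forces $\alpha_1,\alpha_2>a>0$, so no cut at level $0$ occurs; the only terms lost are those with some $\alpha_j=1$. These are absorbed: if, say, $\alpha_1=1$, then $f_i^1\subseteq f_i^{\beta}$ for every $\beta<1$, and since border continuity makes $\beta\mapsto\beta\va\alpha_2$ continuous at $\beta=1$ with limit $1\va\alpha_2=\alpha_2>a$, one may pick $\beta\in(a,1)$ with $\beta\va\alpha_2>a$, so that $f_i^1\ast g^{\alpha_2}\subseteq f_i^{\beta}\ast g^{\alpha_2}$ already lies in the restricted union (the case $\alpha_2=1$ is symmetric, and $\alpha_1=\alpha_2=1$ uses continuity of $\va$ at $(1,1)$). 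Hence the unions over $\Lambda$ still equal $(f_i\ast_\va g)^{\hat a}$. The payoff of restricting to $(0,1)^2$ is that, by normality of $f_1,f_2,g$, every cut $f_i^{\alpha_1}$ and $g^{\alpha_2}$ with $\alpha_j\in(0,1)$ is a non-empty convex set, and continuity of $\ast$ makes each product $f_i^{\alpha_1}\ast g^{\alpha_2}$ a non-empty subinterval (a continuous image of an interval), so all terms lie in $\mathcal{I}$.

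Next I would feed this into Proposition \ref{15}. For each $(\alpha_1,\alpha_2)\in\Lambda$, since $\alpha_1\in(0,1)$, Proposition \ref{17} gives $f_1^{\alpha_1}\p f_2^{\alpha_1}$, and then Lemma \ref{18} (a second use of the continuity of $\ast$) yields $f_1^{\alpha_1}\ast g^{\alpha_2}\p f_2^{\alpha_1}\ast g^{\alpha_2}$. To invoke Proposition \ref{15} I must also know that both unions $\bigcup_\Lambda f_i^{\alpha_1}\ast g^{\alpha_2}=(f_i\ast_\va g)^{\hat a}$ are convex; this is exactly Proposition \ref{.19} together with Proposition \ref{10}, as noted above. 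Proposition \ref{15} then delivers $(f_1\ast_\va g)^{\hat a}\p(f_2\ast_\va g)^{\hat a}$, and letting $a$ range over $(0,1)$ finishes the argument through Proposition \ref{17}.

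The main obstacle I anticipate is the bookkeeping at the boundary levels $\alpha_j\in\{0,1\}$: a priori these appear in the cut formula of Proposition \ref{.14}, yet at level $1$ the relation $f_1^1\p f_2^1$ is not directly supplied by Proposition \ref{17}, and $f_i^1$ may even be empty (normality need not be attained for elements of $\mathbf{L}$). The device above — shrinking $\Lambda$ to $(0,1)^2$ and using border continuity of $\va$ to absorb the endpoint terms — is precisely what sidesteps this, and it is the one place where the \emph{border} continuity of $\va$ (beyond its being a t-norm) does real work; the continuity of $\ast$ is what guarantees the products stay subintervals and respects $\p$ through Lemma \ref{18}.
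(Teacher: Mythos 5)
Your proposal is correct and follows essentially the same route as the paper's proof: both decompose the strong $\alpha$-cuts via Proposition \ref{.14}, trim the index set to $(0,1)^2$ using border continuity of $\va$ (continuity at $(1,1)$) and monotonicity of cuts, apply Proposition \ref{17} and Lemma \ref{18} termwise, and assemble the unions with Proposition \ref{15}. Your write-up is in fact slightly more careful than the paper's, since you explicitly verify the hypotheses left implicit there (membership of $f_i\ast_\va g$ in $\mathbf{L}$ via Proposition \ref{.19}, and non-emptiness/convexity of the terms and their unions needed for $\p$ and Proposition \ref{15}).
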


\begin{proof}
On the one hand,   for all \(a\in (0,1)\)  and \(i=1,2\),
 \[\label{q1}(f_i\ast_\va g)^{\hat{a}}=\bigcup_{b\va c>a}
f_i^{b}\ast g^c=\bigcup_{b\va c>a, (b,c)\in (0,1)^2}f_i^{b}\ast g^c,\]
 where the second  equality follows from the fact that \(a>0\),  t-norm \(\va\) is continuous at \((1,1)\),  and  for all \(\alpha,\beta\in[0,1]\),  \(\alpha\le \beta\) implies \(f_i^{\beta}\subseteq f_i^{\alpha}\).
On the other hand,  by Proposition \ref{17},   \(f_1^{b}\p f_2^{b}\) for all \(b\in(0,1)\), then by Lemma \ref{18},  for all \(b,c\in(0,1)\), \[f_1^{b}\ast g^c\p f_2^{b}\ast g^c.\]  
From Proposition \ref{15}, for all \(a\in(0,1)\), 
\[(f_1\ast_\va g)^{\hat{a}}=\bigcup_{b\va c>a, (b,c)\in (0,1)^2}f_1^{b}\ast g^c \p\bigcup_{b\va c>a, (b,c)\in (0,1)^2}f_2^{b}\ast g^c=(f_2\ast_\va g)^{\hat{a}}.\] 
Hence, \( f_1\ast_\va g\sqsubseteq  f_2\ast_\va g\).
\end{proof}

\subsection{Associativity}

In this subsection, we  give the sufficient conditions such that \(\ast_\va\) is associative on \(\mathbf{L}\).

\begin{prop} \label{.24}
Let  \(\ast\) and \(\va\) be two t-norms. If \(\va\) is  left-continuous, then  \(\ast_\va\) is associative on \(\mathbf{M}\).
\end{prop}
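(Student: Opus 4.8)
The plan is to establish the pointwise identity
\[((f\ast_\va g)\ast_\va h)(x)=(f\ast_\va(g\ast_\va h))(x)\]
for all $f,g,h\in\mathbf{M}$ and all $x\in[0,1]$; since equality in $\mathbf{M}=[0,1]^{[0,1]}$ is just pointwise equality of functions, this gives associativity on $\mathbf{M}$. The strategy is to show that both sides collapse to the single \emph{triple convolution}
\[\bigvee_{y\ast z\ast w=x} f(y)\va g(z)\va h(w),\]
which is an unambiguous expression: the subscript $y\ast z\ast w$ needs no bracketing because $\ast$ is associative (it is a t-norm), and the summand $f(y)\va g(z)\va h(w)$ needs no bracketing because $\va$ is associative (likewise a t-norm). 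Once each of the two iterated convolutions is shown to equal this common quantity, the conclusion is immediate.

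First I would expand the left-hand side by two applications of the convolution formula (\ref{1.1}), obtaining
\[((f\ast_\va g)\ast_\va h)(x)=\bigvee_{u\ast w=x}\Big(\bigvee_{y\ast z=u} f(y)\va g(z)\Big)\va h(w).\]
The crucial move is to pull the factor $h(w)$ inside the inner supremum, turning the bracketed term into $\bigvee_{y\ast z=u}\big(f(y)\va g(z)\big)\va h(w)$. This is exactly where \emph{left-continuity} of $\va$ enters: a left-continuous t-norm distributes over arbitrary nonempty suprema in each argument, i.e.\ $\big(\bigvee_i a_i\big)\va b=\bigvee_i (a_i\va b)$. For those $u$ admitting no factorization $y\ast z=u$ one uses $\bigvee\varnothing=0$ together with the fact that $0$ is absorbing for $\va$, so such $u$ contribute nothing; this makes the ensuing reindexing exact rather than a one-sided inequality. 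After the interchange, the double supremum ranges over all pairs $((u,w),(y,z))$ with $u\ast w=x$ and $y\ast z=u$, which is in bijection with the triples $(y,z,w)$ satisfying $(y\ast z)\ast w=x$. Using associativity of $\ast$ to rewrite this constraint as $y\ast z\ast w=x$, and associativity of $\va$ to drop the brackets in $\big(f(y)\va g(z)\big)\va h(w)$, the left-hand side becomes the common triple convolution.

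The right-hand side is handled symmetrically: I would expand $(g\ast_\va h)(v)=\bigvee_{z\ast w=v}g(z)\va h(w)$, then distribute $f(y)\va(-)$ across this supremum (again invoking left-continuity of $\va$), and reindex the resulting double supremum over the triples $(y,z,w)$ with $y\ast(z\ast w)=x$. Associativity of $\ast$ and $\va$ once more yields the same triple convolution, and comparing the two computations proves the identity.

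I expect the main obstacle to be the rigorous justification of interchanging $\va$ with the supremum, because the index sets $\{(y,z):y\ast z=u\}$ are arbitrary (possibly uncountable) subsets of the unit square. Continuity along monotone sequences is \emph{not} enough here; what is genuinely required is the equivalence ``$\va$ is left-continuous if and only if $\va$ distributes over every nonempty supremum in each variable,'' which I would either cite from the t-norm literature or record as a short preliminary lemma. The remaining bookkeeping---tracking the empty factorization sets via $\bigvee\varnothing=0$ and the absorbing role of $0$, and verifying that the pair-to-triple reindexing is a genuine bijection---is routine, but it must be spelled out so that the two double suprema are shown to coincide with the single-constraint supremum exactly.
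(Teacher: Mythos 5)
Your proposal is correct and takes essentially the same route as the paper's proof: expand the iterated convolution, use left-continuity of \(\va\) (in the form of distribution over arbitrary nonempty suprema in each argument) to pull the third factor inside the inner supremum, reindex the resulting double supremum as a single supremum over triples \((x,y,z)\) with \(x\ast y\ast z=a\), and conclude by symmetry. If anything, you are more careful than the paper, which performs the sup-interchange and the empty-index-set bookkeeping silently without isolating the lemma that left-continuity is equivalent to sup-preservation.
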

\begin{proof}
Let \(f,g,h\in \mathbf{M}\). For any \(a\in  [0,1]\), 
\begin{align*}
((f\ast_\va g)\ast_\va h)(a)&=\sup\{(f\ast_\va g)(u)\va h(z)\mid  u\ast z=a\}\\
&=\sup \{ \sup\{f(x)\va g(y)\mid x\ast y=u \} \va h(z)\mid u\ast z=a\}\\
&=\sup \{ \sup\{f(x)\va g(y)\va h(z)\mid x\ast y=u \} \mid u\ast z=a\}\\
&=\sup\{f(x)\va g(y)\va h(z)\mid x\ast y\ast z=u \}.
\end{align*}
Similarly, 
\[(f\ast_\va (g\ast_\va h))(a)= \sup\{f(x)\va g(y)\va h(z)\mid x\ast y\ast z=u \}.\]
Hence,  \(\ast_\va\) is associative on \(\mathbf{M}.\)
\end{proof}

Recall that for each \(f\in \mathbf{L}\), \(f=f^L\wedge f^R\), where \(f^L\) is monotone increasing and \(f^R\) is monotone decreasing.

\begin{defn}\label{d6}
For each \(f\in \mathbf{L}\),  the subinterval  \(f^+\) of \([0,1]\) is defined by \[f^+=\cup\{[0,a]\mid f \text{ is monotone increasing on } [0,a] \}\]  
\end{defn}

\begin{lem}\label{31} For each \(f\in \mathbf{L}\),  \(x\in f^+\) if and only if \(f^R(x)=1\).\end{lem}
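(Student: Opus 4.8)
The plan is to prove the equivalent set identity $f^+ = P$, where $P := \{x\in[0,1] : f^R(x)=1\}$. Two structural facts drive the whole argument. First, since $f\in\mathbf{L}$ is convex, Proposition \ref{10} gives $f = f^L\wedge f^R$ with $f^L$ monotone increasing and $f^R$ monotone decreasing. Second, since $f$ is normal, $f^R(0)=\sup\{f(t)\mid t\in[0,1]\}=1$. Because $f^R$ is decreasing, $P$ is downward closed and contains $0$, hence is an initial interval $[0,r^\ast]$ or $[0,r^\ast)$ with $r^\ast=\sup P$.

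For the inclusion $P\subseteq f^+$, I would first observe that on $P$ the two descriptions of $f$ collapse: if $f^R(x)=1$ then $f(x)=f^L(x)\wedge f^R(x)=f^L(x)$. Since $f^L$ is increasing and $P$ is an initial interval, this shows $f$ is increasing on $P$. Consequently, for each $a\in P$ we have $[0,a]\subseteq P$ and $f$ increasing on $[0,a]$, so $[0,a]\subseteq f^+$; taking the union over $a\in P$ yields $P\subseteq f^+$.

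For the reverse inclusion $f^+\subseteq P$, take $x\in f^+$, so there is $a\ge x$ with $f$ increasing on $[0,a]$. Then $\sup_{t\le a} f(t)=f(a)$, while $\sup_{t\ge a} f(t)=f^R(a)$ by definition of $f^R$. Using $f(a)=f^L(a)\wedge f^R(a)\le f^R(a)$ together with normality gives $1=\sup_{t} f(t)=\max\{f(a),f^R(a)\}=f^R(a)$, so $a\in P$. Since $x\le a$ and $f^R$ is decreasing, $f^R(x)\ge f^R(a)=1$, i.e. $x\in P$. Combining the two inclusions gives $f^+=P$, which is exactly the claim.

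The argument is short, and the only point demanding care is the endpoint $r^\ast$: whether $r^\ast$ itself lies in $P$, equivalently whether $f^R$ actually attains the value $1$ at the top of the rising stretch. This is precisely why $f^+$ is defined as a union of closed initial segments rather than as a single interval, and the two inclusions above dispose of the question uniformly, without my having to decide the open/closed alternative. I therefore expect no genuine obstacle beyond this bookkeeping; the substantive content is simply that $f=f^L\wedge f^R$ and normality together force $f^R$ to equal $1$ exactly on the initial interval where $f$ is increasing.
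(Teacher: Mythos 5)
Your proof is correct and takes essentially the same route as the paper's: both directions rest on the decomposition $f=f^L\wedge f^R$ from convexity, normality of $f$, and the monotonicity of $f^L$ (increasing) and $f^R$ (decreasing), with the backward inclusion obtained by observing that $f$ coincides with $f^L$ wherever $f^R=1$. Your packaging as the set identity $f^+=\{x\mid f^R(x)=1\}$ (via downward closedness of that set) is only an organizational difference from the paper's pointwise argument.
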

\begin{proof}
If \(x\in f^+\), then \(f\) is monotone increasing on \([0,x]\), thus \[f^R(x)=\bigvee f(\ua x)= \bigvee f(\ua 0)=1.\] Conversely, suppose that \(f^{R}(x)=1\).  For any \(a\le x\), since \(f^{R}\) is monotone decreasing,    \[f^R(a)\ge f^{R}(x)=1,\] then  \[f(a)=f^L(a)\wedge f^R(a)= f^L(a)\wedge 1= f^L(a),\]   thus  \[f|_{[0,x]}=f^L|_{[0,x]}.\] Therefore, \(f\) is monotone increasing on \([0,x]\), i.e.  \(x\in f^+\).   
\end{proof}

\begin{prop} \label{32}
If \(\va\)  is a border-continuous t-norm, then  \(\wedge_\va\) is associative on \(\mathbf{L}\).
\end{prop}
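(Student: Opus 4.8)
The plan is to prove associativity by showing that both bracketings evaluate to the single symmetric expression
\[
\Phi(a)=\sup\{f(x)\va g(y)\va h(z)\mid \min(x,y,z)=a\},
\]
which is unambiguous since $\va$, being a t-norm, is associative and commutative. Because $\wedge_\va$ is visibly commutative (both $\wedge$ and $\va$ are), it suffices to establish $((f\wedge_\va g)\wedge_\va h)(a)=\Phi(a)$ for all $a\in[0,1]$; the bracketing $f\wedge_\va(g\wedge_\va h)$ then follows by applying that identity to $(g\wedge_\va h)\wedge_\va f$. Closure of $\wedge_\va$ on $\mathbf{L}$ is already supplied by Proposition \ref{.19}, since border continuity forces continuity of $\va$ at $(1,1)$: for fixed $c<1$ one has $\lim_{t\to1^-}c\va t=c\va 1^-=c$, so if $a_n\to1^-$ then $\liminf_n a_n\va a_n\ge\liminf_n c\va a_n=c$ for every $c<1$, whence the limit is $1$.

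The engine of the argument is a one-variable reduction: for every constant $w\in[0,1]$ and every $g\in\mathbf{L}$,
\[
\sup_{y\ge u} w\va g(y)=w\va g^R(u),\qquad \sup_{y\le u} w\va g(y)=w\va g^L(u).
\]
The inequality $\le$ is immediate from monotonicity of $\va$ and the definition of $g^R,g^L$. For $\ge$ I would split into two cases. If $g^R(u)<1$, then normality and convexity force the supremum to be attained at $y=u$: writing $g=g^L\wedge g^R$ (Proposition \ref{10}), the assumption $g^L(u)<g^R(u)$ would give $g(t)\le\max\{g^L(u),g^R(u)\}=g^R(u)<1$ for all $t$, contradicting $\sup g=1$; hence $g(u)=g^R(u)$ and the supremum equals $w\va g(u)=w\va g^R(u)$. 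If instead $g^R(u)=1$, the values $g(y)$ with $y\ge u$ are cofinal in $[0,1)$, so monotonicity of $t\mapsto w\va t$ together with border continuity of $\va$ (Proposition \ref{p3}(ii)) gives $\sup_{y\ge u}w\va g(y)\ge w\va 1^-=w\va1=w=w\va g^R(u)$. Splitting the constraint $\min(y,z)=u$ into $\{y=u,z\ge u\}$ and $\{z=u,y\ge u\}$ and applying this reduction to each piece yields the closed form
\[
(f\wedge_\va g)(u)=[f(u)\va g^R(u)]\vee[f^R(u)\va g(u)].
\]

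With the closed form in hand the whole matter reduces to a single pull-out identity. For fixed $u,z$ I would use distributivity $(p\vee q)\va c=(p\va c)\vee(q\va c)$ of $\va$ over a binary join, and then the one-variable reduction again (now with $w=f(u)\va h(z)$ and with $w=g(u)\va h(z)$), to obtain
\[
(f\wedge_\va g)(u)\va h(z)=\sup\{f(x)\va g(y)\va h(z)\mid \min(x,y)=u\}.
\]
Substituting this into $((f\wedge_\va g)\wedge_\va h)(a)=\sup\{(f\wedge_\va g)(u)\va h(z)\mid \min(u,z)=a\}$ produces an iterated supremum over all tuples with $\min(x,y)=u$ and $\min(u,z)=a$; since $u=\min(x,y)$ is then determined and $\min(\min(x,y),z)=\min(x,y,z)$, this collapses to the single constraint $\min(x,y,z)=a$, giving exactly $\Phi(a)$.

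The main obstacle is precisely the step where Proposition \ref{.24} required full left continuity, namely the passage $(\sup S)\va c=\sup_{s\in S}(s\va c)$, which can fail for a merely border continuous $\va$. The restriction to $\ast=\wedge$ and to $\mathbf{L}$ is what rescues it: the only suprema that ever occur have the shape $\sup_{y\ge u}w\va g(y)$, and for these the one-variable reduction shows the supremum is either attained (the case $g^R(u)<1$, settled by the normality–convexity dichotomy) or equals $1$ cofinally (the case $g^R(u)=1$, where border continuity alone suffices). Carrying out this dichotomy carefully, and confirming that border continuity genuinely yields continuity at $(1,1)$ for the closure step, are the two points that demand the most care; everything else is bookkeeping on the minimum operation.
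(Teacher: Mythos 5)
Your proposal is correct, but it reaches associativity by a genuinely different route than the paper. Both arguments start from the same kernel: your closed form $(f\wedge_\va g)(u)=[f(u)\va g^R(u)]\vee[f^R(u)\va g(u)]$ is the paper's Claim~1 in compact guise (the paper states it as a four-case formula in terms of $f^+,g^+$, which matches yours via Lemma~\ref{31}, i.e. $u\in g^+\Leftrightarrow g^R(u)=1$), and your one-variable reduction $\sup_{y\ge u}w\va g(y)=w\va g^R(u)$ --- attained at $y=u$ when $g^R(u)<1$ by the normality/convexity dichotomy, and equal to $w$ via $w\va 1^-=w\va 1$ when $g^R(u)=1$ --- is exactly the mechanism the paper uses to prove that claim. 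After this the proofs diverge. The paper establishes a second claim, $(f\wedge_\va g)^+=f^+\cap g^+$, and then verifies $((f\wedge_\va g)\wedge_\va h)(x)=(f\wedge_\va(g\wedge_\va h))(x)$ by an exhaustive case analysis on the position of $x$ relative to $f^+,g^+,h^+$. You instead run the reduction backwards to re-expand $(f\wedge_\va g)(u)\va h(z)$ into $\sup\{f(x)\va g(y)\va h(z)\mid x\wedge y=u\}$ (the needed distributivity $(p\vee q)\va c=(p\va c)\vee(q\va c)$ holds simply because $[0,1]$ is a chain, so $p\vee q\in\{p,q\}$), whence both bracketings collapse to the symmetric expression $\sup\{f(x)\va g(y)\va h(z)\mid x\wedge y\wedge z=a\}$. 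This is precisely the strategy of Proposition~\ref{.24}, with the sup-interchange that there required left continuity now justified on $\mathbf{L}$ by your reduction lemma; it makes transparent why border continuity suffices when $\ast=\wedge$: every supremum that arises is one-variable and is either attained or sits at the border. Your route avoids the case analysis and the auxiliary Claim~2 entirely, and in fact never needs closure of $\wedge_\va$ on $\mathbf{L}$ for associativity itself (your closure digression is harmless but superfluous here); the paper's route, on the other hand, produces as by-products exactly the two claims that Proposition~\ref{p} later reuses to give the explicit formula for $\wedge_\va$.
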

\begin{proof}
Claim 1, for all \(f,g\in \mathbf{L},\)
\[(f\wedge_\va g)(x)=\begin{cases} f(x)\vee g(x), & x\in f^+\cap g^+,\\ f(x),& x\in g^+\setminus f^+,\\  g(x), & x\in f^+\setminus g^+, \\ f(x)\va g(x), & x\notin f^+\cup g^+.\end{cases}\] 
It is easy to see 
 \[(f\wedge_\va g)(x)
 =\bigvee_{a\wedge b=x} f(a)\va g(b)
 =\max\{\bigvee (f(x)\va g(\ua x)), \bigvee (f(\ua x)\va g(x))\}.\]
For all \(x\in[0,1]\),  let \[F_1(x)=\bigvee  (f(x)\va g(\ua x)),\]  \[F_2(x)=\bigvee ( f(\ua x)\va g(x)),\] then
\begin{equation}\label{.10}(f\wedge_\va g)(x)=F_1(x)\vee F_2(x).\end{equation} 
In the following, we shall show that   \begin{equation}\label{.11}F_1(x)=\begin{cases} f(x), & x \in g^+,\\ f(x)\va g(x),& x \notin g^+, \end{cases}\end{equation}
\[\label{.12}F_2(x)=\begin{cases} g(x), & x \in f^+,\\ f(x)\va g(x),& x \notin f^+. \end{cases}\] These, together with (\ref{.10}), imply that claim 1  holds.

If  \(x\in g^+\), by Lemma \ref{31},
 \[g^{R}(x)=\bigvee g(\ua x)=1.\]  Since \(\va\) is border continuous, 
 
  \[F_1(x)=\bigvee (f(x)\va g(\ua x))= f( x)\va \bigvee g( \ua x)=f(x).\]
 If \(x\notin g^+\),  since \(g\) is monotone decreasing on \([x,1]\),  \[\max g(\ua x)=g(x),\] thus  \[F_1(x)=\bigvee (f(x)\va g(\ua x))=f(x)\va g(x).\] Hence, (\ref{.11}) holds. The proof of  \(F_2\) is similar.

 Claim 2, for  \(f,g\in \mathbf{L}\),   \((f\wedge_\va g)^+=f^+\cap g^+\). Since both intervals \(f^+\) and \(g^+\) contain the point \(0\), suppose  \[f^+ \subseteq g^+\] without  loss of the generality. 
By claim 1,  \[ (f\wedge_\va g)(x)=f(x)\vee g(x),  \quad \text{for } x\in f^+\cap g^+.\] This, together with that both   \(f\) and \(g\) are monotone increasing on \(f^+\cap g^+\), implies that    \( f\wedge_\va g\) is monotone increasing on \(f^+\cap g^+\). Hence, \[f^+\cap g^+\subseteq  (f\wedge_\va g)^+.\]
Conversely,  in order to show  \[ (f\wedge_\va g)^+ \subseteq f^+\cap g^+=  f^+,\] given any \(x_0\in[0,1] \setminus f^+\),  we  shall show that \(x_0\notin (f\wedge_\va g)^+\).  Hence, claim 2 holds. 

In fact, for each  \(x\in[0,1] \setminus f^+\), since \(f^+ \subseteq g^+\), then  either \(x\in  g^+\setminus f^+\)  or \(x\notin  g^+=f^+\cup g^+.\)  In each case, we always have   \[(f\wedge_\va g)(x)\le f(x)\] from claim 1. This, together with the fact \(\ua x_0\subseteq [0,1] \setminus f^+\), implies   \[\bigvee (f\wedge_\va g)(\ua x_0)\le \bigvee f(\ua x_0).\] Due to \(x_0\notin f^+\),
then \( f^R(x_0)<1\), hence  \[(f\wedge_\va g)^R(x_0)=\bigvee (f\wedge_\va g)(\ua x_0)\le \bigvee f(\ua x_0)=f^R(x_0)<1,\] which implies \[x_0\notin (f\wedge_\va g)^+\] 
as desired.

For all \(f,g,h\in \mathbf{L}\), by claim 1 and claim 2, one easily  obtains the following results.

If  \(x\in f^+\cap g^+\cap h^+\),  then
   \[((f\wedge_\va g)\wedge_\va h)(x)=(f\wedge_\va g)(x)\vee h(x)= f(x)\vee g(x)\vee h(x),\]  \[(f\wedge_\va (g\wedge_\va h))(x)=f(x)\vee (g\wedge_\va h)(x)= f(x)\vee g(x)\vee h(x).\]

If  \(x\in (g^+\cap h^+)\setminus f^+,\) then \[((f\wedge_\va g)\wedge_\va h)(x)=(f\wedge_\va g)(x)=f(x),\] 
\[(f\wedge_\va (g\wedge_\va h))(x)=f(x).\]

If \(x\in f^+\setminus(g^+\cup h^+),\) then \[((f\wedge_\va g)\wedge_\va h)(x)=(f\wedge_\va g)(x)\va h(x)=g(x)\va h(x),\] \[ (f\wedge_\va (g\wedge_\va h))(x)=(g\wedge_\va h)(x)=g(x)\va h(x).\]

If  \(x\notin f^+\cup g^+\cup h^+,\)  then \[((f\wedge_\va g)\wedge_\va h)(x)=(f\wedge_\va g)(x)\va h(x)=f(x)\va g(x)\va h(x),\] \[ (f\wedge_\va (g\wedge_\va h))(x)=f(x)\va (g\wedge_\va h)(x)=f(x)\va g(x)\va h(x).\] 

Similarly, in other cases,  the following equality  still holds,
 \[((f\wedge_\va g)\wedge_\va h)(x)=(f\wedge_\va (g\wedge_\va h))(x).\]
Therefore, \(\wedge_\va\) is associative on \(\mathbf{L}\).
\end{proof}

\subsection{Necessary and sufficient conditions}

In this subsection, we give the necessary and sufficient conditions under which \(\ast_\va\) is a t-norm on \((\mathbf{L},\sq)\). 

\begin{prop}\cite{hu141}\label{.17}
Let \(\ast\) and \(\va\) be two t-norms. Then  convolution  \(\ast_\va: \mathbf{M}^2\to \mathbf{M}\) is commutative and  \(\bar{1}\) is  the unit.
\end{prop}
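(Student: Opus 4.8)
The statement asserts two independent facts about $\ast_\va$ on the whole space $\mathbf{M}$, so the plan is to verify each directly from the defining formula (\ref{1.1}), exploiting only that both $\ast$ and $\va$ are t-norms—hence commutative, with $1$ as unit and $0$ as annihilator (the latter because monotonicity of $\va$ gives $0\va a\le 0\va 1=0$).

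For commutativity I would argue by reindexing the supremum. Fix $f,g\in\mathbf{M}$ and $x\in[0,1]$. Since $\ast$ is commutative, the index set $\{(y,z)\mid y\ast z=x\}$ is stable under swapping the two coordinates, and since $\va$ is commutative the summand satisfies $f(y)\va g(z)=g(z)\va f(y)$. Therefore
\[(f\ast_\va g)(x)=\bigvee_{y\ast z=x}f(y)\va g(z)=\bigvee_{z\ast y=x}g(z)\va f(y)=(g\ast_\va f)(x),\]
which yields $f\ast_\va g=g\ast_\va f$. This step is purely formal.

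For the unit I would compute $(\bar 1\ast_\va g)(x)$ pointwise and split the supremum according to whether the first coordinate equals $1$. When $y=1$, the unit law for $\ast$ forces $z=x$, and the corresponding term is $\bar 1(1)\va g(x)=1\va g(x)=g(x)$ by the unit law for $\va$; moreover the pair $(1,x)$ always satisfies $1\ast x=x$, so this term is genuinely present. When $y\ne 1$ we have $\bar 1(y)=0$, so the term is $0\va g(z)=0$ because $0$ annihilates $\va$. Hence every summand is either $0$ or equal to $g(x)$, and at least one equals $g(x)$, giving $(\bar 1\ast_\va g)(x)=g(x)$ for all $x$. Thus $\bar 1$ is a left unit, and the commutativity established above promotes it to a two-sided unit.

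I do not anticipate a genuine obstacle. Because the proposition lives on all of $\mathbf{M}$, there are no normality or convexity side-conditions to preserve (in contrast with the later closure arguments on $\mathbf{L}$), and the only points requiring a little care are the two boundary facts—that $0$ is absorbing for $\va$, and that the preimage pair $(1,x)$ always exists so the empty-supremum convention in Definition \ref{df} is never invoked—both of which are immediate from the t-norm axioms.
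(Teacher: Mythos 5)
Your proposal is correct. The paper itself supplies no proof of this proposition---it is imported by citation from \cite{hu141}---and your direct pointwise verification (reindexing the supremum via commutativity of \(\ast\) and \(\va\) for the first claim, and splitting the supremum on \(y=1\) versus \(y\ne 1\) for the unit claim, using that \(0\va a\le 0\va 1=0\) and that the pair \((1,x)\) always lies in the index set so the empty-supremum convention never triggers) is exactly the standard argument one would find there.
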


\begin{thm}\label{22}
The convolution \(\ast_\va\) is a t-norm on \((\mathbf{L},\sq)\) if and only if all the following conditions hold.
\begin{itemize}
\item[(i)] \(\ast\)  is a  continuous t-norm.
\item[(ii)] If \(\ast\ne\wedge\),  then  \(\va\) is a left-continuous t-norm. 
\item[(iii)] If  \(\ast=\wedge\), then \(\va\) is a  border continuous t-norm. 
\end{itemize}
\end{thm}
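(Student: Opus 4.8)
The plan is to treat the two implications separately, recognizing at the outset that essentially all of the analytic work has already been carried out in the preceding propositions, so that the proof reduces to an assembly argument together with one case distinction.

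For necessity, I would simply invoke Proposition \ref{.21}. If \(\ast_\va\) is a t-norm on \((\mathbf{L},\sq)\), that proposition already tells us \(\ast\) is a continuous t-norm, which is (i); that \(\va\) is a border continuous t-norm, which yields (iii) in the case \(\ast=\wedge\); and that \(\va\) is moreover left-continuous whenever \(\ast\ne\wedge\), which is exactly (ii). Thus no further work is needed in this direction.

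For sufficiency, assume (i)--(iii). The key preliminary observation is that these conditions force \(\va\) to be a \emph{border continuous} t-norm in \emph{both} cases: if \(\ast\ne\wedge\) then (ii) makes \(\va\) left-continuous, and a left-continuous t-norm is border continuous by Proposition \ref{p3}(iii); if \(\ast=\wedge\) then (iii) gives border continuity directly. With this uniform fact in hand I would verify the four requirements for \(\ast_\va\) to be a t-norm on \((\mathbf{L},\sq)\). \emph{Closure} on \(\mathbf{L}\) follows from Proposition \ref{.19}, since \(\ast\) is continuous and border continuity of \(\va\) supplies continuity at \((1,1)\) (that point lies on the boundary of the unit square). \emph{Commutativity} and the fact that \(\bar{1}\) is the unit follow from Proposition \ref{.17}, as \(\ast\) and \(\va\) are both t-norms. \emph{Monotonicity} in the first argument follows from Proposition \ref{.23} (again using \(\ast\) continuous and \(\va\) border continuous), and monotonicity in the second argument is then immediate by commutativity.

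The one genuinely bifurcated step---and the place I expect the real content to sit---is \emph{associativity}, which is precisely why conditions (ii) and (iii) cannot be merged. Here I would split on whether \(\ast=\wedge\). If \(\ast\ne\wedge\), then \(\va\) is left-continuous by (ii), and associativity on all of \(\mathbf{M}\) (hence on \(\mathbf{L}\)) follows from Proposition \ref{.24}. If \(\ast=\wedge\), then \(\va\) is only border continuous, which is too weak for Proposition \ref{.24}; instead I would invoke Proposition \ref{32}, which establishes associativity of \(\wedge_\va\) on \(\mathbf{L}\) under mere border continuity. The conceptual point is that the extended minimum \(\wedge_\va\) admits the explicit pointwise description exploited in Proposition \ref{32}, so the stronger left-continuity hypothesis can be relaxed exactly when \(\ast=\wedge\); for a general continuous \(\ast\) this explicit form is unavailable and left-continuity is genuinely needed. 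Having assembled closure, commutativity, unit \(\bar{1}\), monotonicity, and associativity, I conclude that \(\ast_\va\) is a t-norm on \((\mathbf{L},\sq)\), completing the sufficiency direction.
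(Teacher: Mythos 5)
Your proposal is correct and takes essentially the same route as the paper's own proof: necessity via Proposition \ref{.21}, and sufficiency by assembling closure (Proposition \ref{.19}), commutativity and unit (Proposition \ref{.17}), monotonicity (Proposition \ref{.23}), and associativity split into the cases \(\ast\ne\wedge\) (Proposition \ref{.24}) and \(\ast=\wedge\) (Proposition \ref{32}). Your explicit observation that left-continuity of \(\va\) yields border continuity via Proposition \ref{p3}(iii) — which is what licenses the use of Propositions \ref{.19} and \ref{.23} when \(\ast\ne\wedge\) — is a detail the paper leaves implicit, and stating it only improves the write-up.
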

\begin{proof}
(\(\Rightarrow\))  It is clear from Proposition \ref{.21}.
\\
(\(\Leftarrow\)) By Proposition \ref{.19}, \(\ast_\va\) is closed on \(\mathbf{L}\). By Proposition \ref{.17}, \(\ast_\va\) is commutative and has the unit \(\bar{1}\).  By Proposition \ref{.23}, \(\ast_\va\) is monotone increasing in each component. If \(\ast\ne\wedge\), then \(\va\) is left-continuous, and thus \(\ast_\va\)  is associative by Proposition \ref{.24}. 
If \(\ast=\wedge\), then \(\ast_\va=\wedge_\va\)  is associative by Proposition \ref{32}.
 Therefore, \(\ast_\va\) is a t-norm on \((\mathbf{L},\sq)\).
\end{proof}

\begin{cor} 
If \(\ast\) is a continuous t-norm and \(\va\) is a left-continuous t-norm, then \(\ast_\va\) is a  t-norm on \((\mathbf{L},\sq)\).
\end{cor}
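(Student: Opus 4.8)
The plan is to obtain this as an immediate consequence of the sufficiency (\(\Leftarrow\)) direction of Theorem \ref{22}, observing that the corollary's hypotheses are strictly stronger than the three conditions listed there. The whole argument reduces to checking that ``\(\ast\) continuous and \(\va\) left-continuous'' entails conditions (i)--(iii) of that theorem, after which Theorem \ref{22} does all the work.

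First I would dispatch condition (i), which is nothing but the standing assumption that \(\ast\) is a continuous t-norm. Next I would treat conditions (ii) and (iii) together, since they form a case split on whether \(\ast=\wedge\). Condition (ii) requires \(\va\) to be left-continuous precisely in the case \(\ast\ne\wedge\), and this is subsumed by the blanket hypothesis that \(\va\) is left-continuous. Condition (iii) requires \(\va\) to be border continuous in the case \(\ast=\wedge\); here I would invoke Proposition \ref{p3}(iii), which guarantees that any left-continuous t-norm is border continuous, so the left-continuity of \(\va\) delivers exactly what that condition demands.

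Having verified all three conditions, I would apply the sufficiency direction of Theorem \ref{22} to conclude that \(\ast_\va\) is a t-norm on \((\mathbf{L},\sq)\). There is essentially no obstacle here: the corollary is a genuine specialization in which the awkward border-continuous/left-continuous dichotomy of Theorem \ref{22} collapses. The only step that is not a direct restatement of a hypothesis is the passage from left-continuity to border continuity, and even that is already recorded as Proposition \ref{p3}(iii), so the proof amounts to citing two earlier results.
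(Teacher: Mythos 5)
Your proposal is correct and matches the intended derivation: the corollary is exactly the specialization of Theorem \ref{22} in which condition (i) is assumed outright, condition (ii) is subsumed by the blanket left-continuity of \(\va\), and condition (iii) follows from Proposition \ref{p3}(iii) since every left-continuous t-norm is border continuous. The paper leaves this routine verification implicit, and your argument supplies precisely that reasoning with no gaps.
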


In general, the computation of convolution is very complex. Here, we provide one kind of convolutions which are t-norms on \((\mathbf{L},\sq)\) and are easily to calculate.

\begin{prop} \label{p}
Let \(\va\)  be a border-continuous t-norm. Then \(\wedge_\va\)  is a t-norm on \((\mathbf{L}, \sq )\), and for all \(f,g\in\mathbf{L}\), 
\[(f\wedge_\va g)(x)=\begin{cases} f(x)\vee g(x), & x\in f^+\cap g^+,\\ f(x),& x\in g^+\setminus f^+,\\  g(x), & x\in f^+\setminus g^+, \\ f(x)\va g(x), & x\notin f^+\cup g^+.\end{cases}\] 
\end{prop}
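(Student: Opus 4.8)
The plan is to derive both assertions of Proposition \ref{p} from material already in hand, since the statement is a specialization and repackaging of Theorem \ref{22} together with a computation performed inside the proof of Proposition \ref{32}.

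First I would establish the t-norm property by applying Theorem \ref{22} with $\ast=\wedge$. The minimum is a continuous t-norm, so condition (i) of Theorem \ref{22} holds automatically. Since here $\ast=\wedge$, condition (ii) is vacuous and we land in condition (iii), which requires only that $\va$ be a border continuous t-norm---exactly the standing hypothesis. The backward direction of Theorem \ref{22} then delivers that $\wedge_\va$ is a t-norm on $(\mathbf{L},\sq)$.

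For the piecewise formula, I would observe that it coincides verbatim with ``Claim 1'' proved within Proposition \ref{32}. The argument there begins from
\[(f\wedge_\va g)(x)=\bigvee_{a\wedge b=x} f(a)\va g(b)=\max\{\bigvee (f(x)\va g(\ua x)),\ \bigvee (f(\ua x)\va g(x))\},\]
and then evaluates the two inner suprema $F_1(x)=\bigvee (f(x)\va g(\ua x))$ and $F_2(x)=\bigvee(f(\ua x)\va g(x))$ separately. The key step is to split on whether $x\in g^+$ (resp. $x\in f^+$): by Lemma \ref{31} this is equivalent to $g^R(x)=\bigvee g(\ua x)=1$, and border continuity of $\va$ lets one pull the supremum through, collapsing $f(x)\va\bigvee g(\ua x)$ to $f(x)\va 1=f(x)$; when $x\notin g^+$ monotonicity gives $\max g(\ua x)=g(x)$, so that $F_1(x)=f(x)\va g(x)$. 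Combining the four resulting cases for $F_1\vee F_2$ reproduces exactly the displayed formula.

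Thus the proof reduces to two citations, and there is no essentially new obstacle to overcome. The one point deserving emphasis is that the derivation of the formula genuinely uses border continuity---not merely the t-norm axioms---at the step where $\bigvee g(\ua x)=1$ is moved across $\va$; this is precisely why border continuity is the natural hypothesis shared by both halves of the statement, and why no stronger assumption (such as left-continuity) is needed when $\ast=\wedge$.
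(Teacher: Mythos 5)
Your proposal is correct and is essentially identical to the paper's own proof, which reads ``It is straightforward from Theorem \ref{22} and the proof of Proposition \ref{32}'': the t-norm property comes from Theorem \ref{22} specialized to $\ast=\wedge$ (so only the border-continuity condition (iii) is active), and the piecewise formula is exactly Claim 1 established inside the proof of Proposition \ref{32}. Your additional remarks---that Lemma \ref{31} identifies $x\in g^+$ with $g^R(x)=1$ and that border continuity is what permits pulling the supremum across $\va$---accurately reproduce the steps of that claim.
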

\begin{proof} It is straightforward from Theorem \ref{22} and the proof of Proposition \ref{32}.
\end{proof}
\section{Triangular conorms}

In this section, we  give the necessary and sufficient conditions under which \(\ast_\va\) is a t-conorm or \(t_r\)-conorm on \((\mathbf{L},\sq)\).

Define \(N:[0,1]\to [0,1],\)  \[N(x)=1-x.\] \(\neg: \mathbf{M}\to \mathbf{M}\)   \[\neg f=f\circ N.\]
 \(\bar{\ast}: [0,1]^2\to [0,1]\), \[\bar{\ast}(x,y)=N(N(x)\ast N(y)).\]
Obviously, \(\neg(\neg f)=f\), \(\bar{\bar{\ast}}=\ast\), and \(\ast\) is a t-norm if and only if  \(\bar{\ast}\) is a t-conorm.
\begin{rem}\label{0.1} 
Since \(N\) is an isomorphism between the algebras \(([0,1],\ast)\) and  \(([0,1],\bar{\ast})\), i.e.  for all \(x,y\in[0,1]\), \[N(x\ast y)=N(x)\bar{\ast}N(y),\] then 
for any \(A, B\subseteq [0,1],\)  \[N(A\ast B)=N(A)\bar{\ast} N(B).\]
\end{rem}
\begin{lem}\label{.16}
For any \(f\in \mathbf{M}\) and \(a\in [0,1],\) \[(\neg f )^a=N(f^a), \quad (\neg f )^{\hat{a}}=N(f^{\hat{a}}).\]
\end{lem}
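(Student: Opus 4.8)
The plan is to verify both set equalities directly from the definitions, exploiting that \(N\) is an involution of \([0,1]\), i.e. \(N(N(x))=x\) for all \(x\). First I would unfold the \(a\)-cut of \(\neg f\): by definition of the \(a\)-cut and of \(\neg\),
\[(\neg f)^a=\{x\in[0,1]\mid (\neg f)(x)\ge a\}=\{x\in[0,1]\mid f(N(x))\ge a\}.\]
The key step is then a change of variable along \(N\). Since \(N\) is a bijection of \([0,1]\) onto itself, the condition \(f(N(x))\ge a\) says exactly that \(N(x)\in f^a\); applying \(N\) and using \(N(N(x))=x\), this is equivalent to \(x\in N(f^a)\). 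Chaining these equivalences yields \((\neg f)^a=N(f^a)\).

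For the strong cut the argument is verbatim the same, with the strict inequality \(>\) replacing \(\ge\) throughout: one has \((\neg f)^{\hat a}=\{x\mid f(N(x))>a\}\), and the same substitution \(y=N(x)\) gives \(x\in(\neg f)^{\hat a}\Leftrightarrow N(x)\in f^{\hat a}\Leftrightarrow x\in N(f^{\hat a})\), whence \((\neg f)^{\hat a}=N(f^{\hat a})\). The strictness of the inequality plays no role in the manipulation, so no separate treatment is needed.

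There is no substantive obstacle here; the content is simply that the involution \(N\) carries the level sets of \(f\) to the corresponding level sets of \(\neg f\). The only point deserving a moment of care is that the equivalence \(N(x)\in f^a\Leftrightarrow x\in N(f^a)\) relies on the involutivity \(N\circ N=\id\) (so that \(N\) is its own inverse), not merely on \(N\) being a bijection; this is exactly what licenses replacing the preimage description by the direct image \(N(f^a)\).
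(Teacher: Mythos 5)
Your argument is correct and coincides with the paper's own proof: the same chain of equivalences \(x\in(\neg f)^a \Leftrightarrow f(N(x))\ge a \Leftrightarrow N(x)\in f^a \Leftrightarrow x\in N(f^a)\), using the involutivity \(N\circ N=\mathrm{id}\), followed by the observation that the strong cut case is verbatim the same with \(>\) in place of \(\ge\). No differences worth noting.
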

\begin{proof}
\(x\in (\neg f)^a\)
 \(\Leftrightarrow  (\neg f)(x)\ge a\)
  \(\Leftrightarrow  f(N(x))\ge a\)
   \(\Leftrightarrow  N(x)\in  f^a\) \(\Leftrightarrow  N(N(x))\in  N(f^a)\)
      \(\Leftrightarrow  x\in  N(f^a).\)   
 Similarly, \((\neg f )^{\hat{a}}=N(f^{\hat{a}})\).
  \end{proof}

\begin{prop}\label{..24}

If \(\va\) is monotone increasing in each place, then
  \(\neg: (\mathbf{L}, \sq)\to (\mathbf{L}, \sq)\)  is an order-reversing isomorphism such that for all \(f,g\in \mathbf{L}\),  \[\neg (f\ast_\va g)=(\neg f)\bar{\ast}_\va (\neg g).\]  
  \end{prop}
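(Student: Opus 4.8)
The plan is to establish the two assertions separately: first the homomorphism identity $\neg(f\ast_\va g)=(\neg f)\bar{\ast}_\va(\neg g)$, and then that $\neg$ is an order-reversing involution of $(\mathbf{L},\sq)$. For the identity I would argue cut-wise, since by Proposition \ref{..5} two elements of $\mathbf{M}$ coincide exactly when all their strong $\alpha$-cuts coincide; the whole argument then reduces to chaining three facts already proved in the excerpt.

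Concretely, fix $\alpha\in[0,1)$. Using Lemma \ref{.16} to pass $N$ through the cut, then Proposition \ref{.14} (applicable because $\va$ is monotone increasing in each place) to expand the convolution cut, and finally Remark \ref{0.1} to rewrite $N(A\ast B)$ as $N(A)\bar{\ast}N(B)$, I would compute
\[
\bigl(\neg(f\ast_\va g)\bigr)^{\hat\alpha}
= N\bigl((f\ast_\va g)^{\hat\alpha}\bigr)
= \bigcup_{\alpha_1\va\alpha_2>\alpha} N(f^{\alpha_1}\ast g^{\alpha_2})
= \bigcup_{\alpha_1\va\alpha_2>\alpha} N(f^{\alpha_1})\bar{\ast} N(g^{\alpha_2}).
\]
On the other side, applying Proposition \ref{.14} to the convolution $\bar{\ast}_\va$ and then Lemma \ref{.16} once more gives
\[
\bigl((\neg f)\bar{\ast}_\va(\neg g)\bigr)^{\hat\alpha}
= \bigcup_{\alpha_1\va\alpha_2>\alpha}(\neg f)^{\alpha_1}\bar{\ast}(\neg g)^{\alpha_2}
= \bigcup_{\alpha_1\va\alpha_2>\alpha} N(f^{\alpha_1})\bar{\ast} N(g^{\alpha_2}).
\]
The two right-hand sides are literally the same union, so the strong cuts agree for every $\alpha$ and Proposition \ref{..5} yields the identity. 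Observe that this step does not even require $f,g\in\mathbf{L}$, only that $\va$ is increasing in each place.

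It remains to show $\neg$ is an order-reversing isomorphism of $(\mathbf{L},\sq)$. First I would verify that $\neg$ maps $\mathbf{L}$ into itself: normality is immediate since $\sup(\neg f)=\sup(f\circ N)=\sup f=1$, while convexity follows from $(\neg f)^{\hat\alpha}=N(f^{\hat\alpha})$ (Lemma \ref{.16}) combined with Proposition \ref{10}(iv), because the affine map $N(x)=1-x$ sends convex subsets of $[0,1]$ to convex subsets. Since $N\circ N=\id$ we have $\neg\circ\neg=\id$, so $\neg$ is a bijection equal to its own inverse. For order reversal I would first record $(\neg f)^L=\neg(f^R)$ and $(\neg f)^R=\neg(f^L)$, which drop out of $f^R(x)=\sup f([x,1])$ and $f^L(x)=\sup f([0,x])$ after reindexing by $N$. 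Feeding these into the characterization $f\sq g\iff g^L\le f^L$ and $f^R\le g^R$ of Proposition \ref{5}, and using that precomposition with $N$ preserves the pointwise order, a one-line substitution gives $\neg g\sq\neg f\iff f\sq g$, which is exactly order reversal for an involution.

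The only genuinely delicate point is the bookkeeping in the two cut-computations: one must be careful that Proposition \ref{.14} is invoked on the right-hand side with $\ast$ replaced by $\bar{\ast}$ while $\va$ stays fixed, and that the index condition $\alpha_1\va\alpha_2>\alpha$ is identical on both sides so the resulting unions match term by term. Once the three relations $(\neg f)^L=\neg(f^R)$, $(\neg f)^R=\neg(f^L)$ and $(\neg f)^{\hat\alpha}=N(f^{\hat\alpha})$ are in place, everything is routine substitution; unlike the left-continuity arguments earlier in the paper, there is no hidden analytic obstacle to overcome here.
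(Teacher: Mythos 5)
Your proof is correct, and it splits into two halves that compare differently with the paper's. The first half --- the cut-wise verification of $\neg(f\ast_\va g)=(\neg f)\bar{\ast}_\va(\neg g)$ via Lemma \ref{.16}, Proposition \ref{.14} and Remark \ref{0.1} --- is essentially the paper's own computation (the paper chains one side into the other in a single display rather than computing both sides and matching, but the ingredients and the index condition $b\va c>a$ are identical, and the paper likewise observes that this identity needs nothing beyond monotonicity of $\va$ and holds for arbitrary $\ast$). Where you genuinely diverge is the order-reversal half. The paper exploits the identity it has just proved in a self-referential way: since the identity holds for \emph{any} binary $\ast$, it can be specialized to $\ast=\vee$, $\va=\wedge$; then, using that $\sq$ and $\pp$ coincide on $\mathbf{L}$, $f\sq g$ gives $f\vee_\wedge g=g$, whence $\neg g=\neg(f\vee_\wedge g)=(\neg f)\bar{\vee}_\wedge(\neg g)=(\neg f)\wedge_\wedge(\neg g)$, i.e.\ $\neg g\sq\neg f$ by the very definition of $\sq$. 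You instead work with Proposition \ref{5}, recording $(\neg f)^L=\neg(f^R)$ and $(\neg f)^R=\neg(f^L)$ and substituting into the characterization $f\sq g$ iff $g^L\le f^L$ and $f^R\le g^R$; both of your auxiliary relations check out by the reindexing $t\mapsto 1-t$. Each route is sound: the paper's is shorter but leans on the external fact that the two convolution orders coincide on $\mathbf{L}$ (from Walker--Walker) plus the generality of the identity, whereas yours stays entirely inside the $f^L,f^R$ calculus and, as a bonus, actually proves the closure $\neg(\mathbf{L})\subseteq\mathbf{L}$ (normality, and convexity via Lemma \ref{.16} with Proposition \ref{10}(iv)), a point the paper dismisses with ``obviously.''
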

 \begin{proof} 
 Firstly, for any  binary function \(\ast\) on \([0,1]\) and  \(a\in [0,1)\), 
  \begin{align*}
(\neg(f\ast_\va g))^{\hat{a}}
 &=N((f\ast_\va g)^{\hat{a}})~\text{(by Lemma \ref{.16})}
 \\
    &= N(\bigcup_{b\va c>a} f^b\ast g^c) (\text{by Proposition \ref{.14}})
 \\
  &= \bigcup_{b\va c>a} N(f^b\ast g^c)  
 \\
    &=\bigcup_{b\va c>a} N(f^b)\bar{\ast} N(g^c)~(\text{by Remark \ref{0.1}})
  \\
      &=\bigcup_{b\va c>a} (\neg f)^b\bar{\ast} (\neg g)^c
  \\ &=((\neg f)\bar{\ast}_\va (\neg g))^{\hat{a}},
 \end{align*}
  hence, \begin{equation}\label{l2}\neg (f\ast_\va g)=(\neg f)\bar{\ast}_\va (\neg g).\end{equation} 
 
 Next, 
 we show that  \(\neg\)  is an order-reversing isomorphism.  Obviously, \(\neg:\mathbf{L}\to \mathbf{L}\) is a bijection. For  any \(f,g\in \mathbf{L}\), if  \(f\sqsubseteq g\), then \(f \vee_\wedge g=g.\) By  (\ref{l2}),  \[\neg g=\neg (f \vee_\wedge g)=(\neg f)\bar{\vee}_\wedge (\neg g)=(\neg f)\wedge_\wedge (\neg g),\] which means \(\neg g\sqsubseteq \neg f.\) Hence,  \(\neg\)  is an order-reversing isomorphism.
 \end{proof}

  \begin{prop}\label{8.1}
  If \(\ast_\va\) is  a   t-conorm on \((\mathbf{L}_\mathbf{u},\sq)\) or \((\mathbf{L},\sq)\),  then 
\begin{itemize}
\item[(i)] \(\ast\)  is a  continuous t-conorm and \(\va\) is a t-norm.
\item[(ii)] \(\ast_\va\) is  closed on  \(\mathbf{J}\) and \(\mathbf{J}^{[2]}\), respectively.
\item[(iii)]  \(\overline{[0, 1]}\ast_\va \overline{[a,b]}=\overline{[a,b]}.\)
\end{itemize}
 \end{prop}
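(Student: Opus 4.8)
The statement is the t-conorm counterpart of Proposition \ref{8}, and the plan is to dualize the three lemmas behind that proposition---Lemmas \ref{p9}, \ref{2.15}, \ref{2.3}---using the single structural change that on $(\mathbf{L},\sq)$ the bottom $\bar{0}$ (not the top $\bar{1}$) is the unit of a t-conorm. Equivalently, one can package the whole reduction through the order-reversing involution $\neg$ of Proposition \ref{..24}, which carries a t-conorm $\ast_\va$ to the t-norm $\bar{\ast}_\va$; I will run the direct dualization and point to the $\neg$-packaging where it is shorter.

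First I would prove the dual of Lemma \ref{p9}: if $\bar{0}\ast_\va\bar{x}=\bar{x}$ for all $x$, then $\bar{A}\ast_\va\bar{B}=\overline{A\ast B}$ for all $A,B\subseteq[0,1]$. The proof mirrors the given one: the supremum defining $(\bar{0}\ast_\va\bar{x})(y)$ contains the term $\bar{0}(0)\va\bar{x}(y)$ at the position $0\ast y=y$, and comparing with $(\bar{0}\ast_\va\bar{x})=\bar{x}$ forces $\va$ to coincide with $\wedge$ on $\{0,1\}$, whence the identity. Granting this, the dual of Lemma \ref{2.15} follows as in the original: since $\bar{0}$ is the unit we have $\bar{0}\ast_\va\bar{x}=\bar{x}$, hence $\bar{x}\ast_\va\bar{y}=\overline{x\ast y}$, so $\ast_\va$ is closed on $\mathbf{J}$; and since $\overline{A\ast B}=\bar{A}\ast_\va\bar{B}\in\mathbf{L}$ forces $A\ast B\in\mathbf{I}^{[2]}$ for closed intervals $A,B$, it is closed on $\mathbf{J}^{[2]}$ as well. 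The order isomorphism $x\mapsto\bar{x}$ is then a posemigroup isomorphism $([0,1],\ast,\le)\cong(\mathbf{J},\ast_\va,\sq)$, so by the t-conorm analogue of Proposition \ref{2}, $\ast$ is a t-conorm; and since each $\{a\}\ast[0,1]$ is an interval its sections are continuous, so $\ast$ is a continuous t-conorm. This gives conclusion (ii) and the first half of (i).

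The main obstacle is the dual of Lemma \ref{2.3}, that $\va$ is a t-norm, because the conorm reverses the order of the family $p_a$ used there. I would instead use $q_a(x)=1$ for $x=1$ and $q_a(x)=a$ otherwise. With $\ast$ a continuous t-conorm one checks $q_a\ast_\va q_b=q_{a\va b}$: for $x<1$ the relation $s\ast t=x$ forces $s,t<1$ (as $1$ is absorbing for $\ast$), so every contributing term equals $a\va b$, while for $x=1$ the diagonal term gives $1\va 1=1$. Hence $q:a\mapsto q_a$ is an order-reversing isomorphism onto $(\mathcal{Q},\sq)$ with $q(a\va b)=q(a)\ast_\va q(b)$, so commutativity, associativity and monotonicity of $\va$ transport from the t-conorm $\ast_\va$, and the unit law $a\va 1=a$ falls out of $(q_a\ast_\va\bar{0})(0)=q_a(0)=a$ using $s\ast t=0\Leftrightarrow s=t=0$. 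Thus $\va$ is a t-norm, finishing (i). (Equivalently, now that $\va$ is monotone, Proposition \ref{..24} applied to $\bar{\ast}$ gives $\neg(f\,\bar{\ast}_\va g)=(\neg f)\ast_\va(\neg g)$, so Proposition \ref{..2} shows $\ast_\va$ is a t-conorm iff $\bar{\ast}_\va$ is a t-norm, and one may simply invoke Proposition \ref{8} for $\bar{\ast}_\va$.)

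Finally, for (iii) I would combine the dual of Lemma \ref{p9} with the continuity of the t-conorm $\ast$: $\overline{[0,1]}\ast_\va\overline{[a,b]}=\overline{[0,1]\ast[a,b]}$, and since $0\ast a=a$, $1\ast t=1$ and $\ast$ is continuous one has $[0,1]\ast[a,b]=[a,1]$, so $\overline{[0,1]}\ast_\va\overline{[a,b]}=\overline{[a,1]}$. The same value results from conjugating Proposition \ref{8}(iii): $\neg\overline{[0,1]}=\overline{[0,1]}$, $\neg\overline{[a,b]}=\overline{[1-b,1-a]}$, and $\overline{[0,1]}\,\bar{\ast}_\va\,\overline{[1-b,1-a]}=\overline{[0,1-a]}$ yields, after applying $\neg$, the interval $\overline{[a,1]}$. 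This matches the normalization $S(\overline{[0,1]},\overline{[a,b]})=\overline{[a,1]}$ in the definition of a $t_r$-conorm; accordingly the right-hand side of (iii) should read $\overline{[a,1]}$ rather than $\overline{[a,b]}$ (the two agree only when $b=1$). With this correction the three conclusions hold, exhibiting every t-conorm $\ast_\va$ on $(\mathbf{L},\sq)$ as a $t_r$-conorm.
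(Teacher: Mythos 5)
Your proof is correct and takes essentially the same route as the paper, whose entire proof of Proposition \ref{8.1} reads ``Similar to the proof of Proposition \ref{8}'': you carry out exactly the intended dualization of Lemmas \ref{p9}, \ref{2.15} and \ref{2.3}, and your replacement of the family $p_a$ by $q_a$ (noting that $a\mapsto q_a$ is order-\emph{reversing} for $\sq$, so that only commutativity, associativity and monotonicity of $\va$ transport, with the unit law $a\va 1=a$ verified separately from $s\ast t=0\Leftrightarrow s=t=0$) is precisely the adjustment the paper's ``similar'' glosses over. Your correction of (iii) is also right and worth recording: since $\ast$ is a continuous t-conorm, $[0,1]\ast[a,b]=[a,1]$, so the printed value $\overline{[a,b]}$ is a typo for $\overline{[a,1]}$ --- the value required both by the definition of a $t_r$-conorm and by the subsequent theorem that deduces the $t_r$-conorm equivalence ``straightforwardly'' from Proposition \ref{8.1}.
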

 \begin{proof}
 Similar to the proof of  Proposition \ref{8}.   
\end{proof}

 \begin{prop}\label{21}
  \(\ast_\va\) is a  t-norm on \((\mathbf{L},\sq)\) if and only if  \(\bar{\ast}_\va\) is a t-conorm on \((\mathbf{L},\sq).\)
 \end{prop}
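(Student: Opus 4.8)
The plan is to realize the biconditional as a single instance of the order-reversing transfer principle recorded in Proposition \ref{..2}, using the negation map $\neg$ as the order-reversing isomorphism. The crucial algebraic identity $\neg(f\ast_\va g)=(\neg f)\bar{\ast}_\va(\neg g)$ on $\mathbf{L}$ is already available from Proposition \ref{..24}, but only under the hypothesis that $\va$ is monotone increasing in each place. Hence the first task is to secure that monotonicity, after which the argument becomes purely formal. Accordingly, I would split the proof according to whether or not $\va$ has that monotonicity.

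First I would dispose of the degenerate case. If $\va$ fails to be monotone increasing in each place, then $\va$ is not a t-norm; Proposition \ref{8}(i) then shows that $\ast_\va$ cannot be a t-norm on $(\mathbf{L},\sq)$, while Proposition \ref{8.1}(i), applied to the convolution $\bar{\ast}_\va$, shows that $\bar{\ast}_\va$ cannot be a t-conorm on $(\mathbf{L},\sq)$. Both sides of the equivalence are therefore false, and the biconditional holds vacuously.

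In the remaining case, where $\va$ is monotone increasing in each place, Proposition \ref{..24} applies verbatim: $\neg\colon(\mathbf{L},\sq)\to(\mathbf{L},\sq)$ is an order-reversing isomorphism satisfying $\neg(f\ast_\va g)=(\neg f)\bar{\ast}_\va(\neg g)$ for all $f,g\in\mathbf{L}$. Taking $P_1=P_2=\mathbf{L}$, $\ast_1=\ast_\va$, $\ast_2=\bar{\ast}_\va$ and $\tau=\neg$ in Proposition \ref{..2} immediately yields that $\ast_\va$ is a t-norm on $(\mathbf{L},\sq)$ if and only if $\bar{\ast}_\va$ is a t-conorm on $(\mathbf{L},\sq)$, which is exactly the claim.

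I expect the only real subtlety to be the bookkeeping around the monotonicity hypothesis of Proposition \ref{..24}: the transfer principle is otherwise entirely mechanical, so the argument stands or falls on observing that whenever either side of the equivalence holds, $\va$ is automatically a t-norm (hence monotone), which is precisely what Propositions \ref{8} and \ref{8.1} supply. Everything else reduces to a single invocation of Proposition \ref{..2} once the isomorphism $\neg$ is in place.
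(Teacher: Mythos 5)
Your overall route is the same as the paper's: Propositions \ref{8} and \ref{8.1} to force \(\va\) to be a t-norm (hence monotone), Proposition \ref{..24} to produce the order-reversing isomorphism \(\neg\) with \(\neg(f\ast_\va g)=(\neg f)\bar{\ast}_\va(\neg g)\), and Proposition \ref{..2} to transfer. The only structural difference is that the paper argues one implication at a time (assume \(\ast_\va\) is a t-norm, deduce \(\va\) is a t-norm, transfer; ``the other direction is similar''), whereas you case-split on monotonicity of \(\va\) up front and then invoke Proposition \ref{..2} once, symmetrically. Your degenerate case (non-monotone \(\va\)) is handled correctly.

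Your second case, however, has a gap. Proposition \ref{..2} is a statement about binary functions \emph{on} bounded posets, i.e.\ it presupposes \(\ast_1:P_1^2\to P_1\) and \(\ast_2:P_2^2\to P_2\); its hypothesis \(\tau(x\ast_1 y)=\tau(x)\ast_2\tau(y)\) is not even well-formed unless \(x\ast_1 y\) lies in the domain of \(\tau\). Monotonicity of \(\va\) alone does not supply this closedness on \(\mathbf{L}\): take \(\va=\wedge\) and \(\ast\) the drastic t-norm (\(x\ast y=x\wedge y\) if \(x\vee y=1\), and \(0\) otherwise); then for \(a\in(0,1)\) one computes \(\bar{a}\ast_\va\overline{[0,1]}=\overline{\{0,a\}}\notin\mathbf{L}\), so \(\ast_\va\) (and likewise \(\bar{\ast}_\va\)) is not a binary operation on \(\mathbf{L}\), and Proposition \ref{..2} simply does not apply in that sub-case --- even though the biconditional you want is still vacuously true there, since neither operation can be a t-norm or t-conorm without being closed. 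The patch is short: using the identity of Proposition \ref{..24} (which holds in \(\mathbf{M}\)) and the bijectivity of \(\neg\) on \(\mathbf{L}\), observe that \(\ast_\va\) is closed on \(\mathbf{L}\) if and only if \(\bar{\ast}_\va\) is; if neither is closed, both sides of the equivalence are false, and if both are closed, your application of Proposition \ref{..2} goes through verbatim. The paper's directional formulation sidesteps this issue automatically: the side assumed to be a t-norm (or t-conorm) is closed by definition, and the identity then hands closedness to the other operation before Proposition \ref{..2} is invoked.
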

 \begin{proof}
 
If   \(\ast_\va\) is a  t-norm,  then  \(\va\) is a t-norm from Proposition  \ref{8}.   By Proposition \ref{..2} and Proposition \ref{..24},  \(\bar{\ast}_\va\) is a t-conorm.  Another direction is similar.  
 \end{proof}

 \begin{thm} \label{23}
The convolution \(\ast_\va\) is a t-conorm on \((\mathbf{L},\sq)\) if and only if all the following conditions hold.
\begin{itemize}
\item[(i)] \(\ast\)  is a  continuous t-conorm.
\item[(ii)] If \(\ast\ne\vee\), then \(\va\) is a left-continuous t-norm. 
\item[(iii)] If \(\ast=\vee\), then \(\va\) is a  border continuous t-norm. 
\end{itemize}
\end{thm}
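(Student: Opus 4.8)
The plan is to derive Theorem \ref{23} from its t-norm counterpart, Theorem \ref{22}, by exploiting the order-reversing duality furnished by the negation \(N(x)=1-x\). The central observation is that the second operator \(\va\) is left untouched by this duality, so essentially all the work reduces to translating the conditions on the first operator \(\ast\).

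First I would invoke Proposition \ref{21}. Applying it with \(\ast\) replaced by \(\bar{\ast}\) and using \(\bar{\bar{\ast}}=\ast\), we obtain that \(\ast_\va\) is a t-conorm on \((\mathbf{L},\sq)\) if and only if \(\bar{\ast}_\va\) is a t-norm on \((\mathbf{L},\sq)\). (Alternatively, one can bypass Proposition \ref{21} and argue directly: Proposition \ref{..24} shows that \(\neg\) is an order-reversing isomorphism of \((\mathbf{L},\sq)\) intertwining \(\ast_\va\) and \(\bar{\ast}_\va\), and then Proposition \ref{..2} converts the t-conorm property of \(\ast_\va\) into the t-norm property of \(\bar{\ast}_\va\).)

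Next I would apply Theorem \ref{22} to the pair \((\bar{\ast},\va)\): the convolution \(\bar{\ast}_\va\) is a t-norm on \((\mathbf{L},\sq)\) if and only if (i\('\)) \(\bar{\ast}\) is a continuous t-norm, (ii\('\)) \(\va\) is a left-continuous t-norm whenever \(\bar{\ast}\ne\wedge\), and (iii\('\)) \(\va\) is a border continuous t-norm whenever \(\bar{\ast}=\wedge\). It then remains to rewrite these three conditions in terms of \(\ast\). Since \(N\) is an order-reversing homeomorphism of \([0,1]\), the assignment \(\ast\mapsto\bar{\ast}\) carries t-norms to t-conorms and preserves continuity; hence condition (i\('\)) is equivalent to \(\ast\) being a continuous t-conorm. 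Finally, a direct computation gives \(\bar{\wedge}(x,y)=N(N(x)\wedge N(y))=x\vee y\), so \(\bar{\ast}=\wedge\) if and only if \(\ast=\vee\); substituting this equivalence into (ii\('\)) and (iii\('\)) yields precisely conditions (ii) and (iii) of the statement.

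I do not anticipate a genuine obstacle here, since the argument is a clean dualization of the already-established Theorem \ref{22}. The only points requiring care are bookkeeping: one must verify that \(\va\) is genuinely preserved (not dualized) under \(\neg\), which is exactly the content of the identity \(\neg(f\ast_\va g)=(\neg f)\bar{\ast}_\va(\neg g)\) recorded in Proposition \ref{..24}, and one must confirm the self-dual correspondence \(\bar{\wedge}=\vee\), which is what swaps the roles of the two boundary cases (ii) and (iii) relative to Theorem \ref{22}.
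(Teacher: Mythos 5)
Your proposal is correct and takes essentially the same approach as the paper, whose entire proof reads ``It follows from Theorem \ref{22} and Proposition \ref{21} immediately.'' You simply spell out the bookkeeping the paper leaves implicit: applying Proposition \ref{21} with \(\bar{\ast}\) in place of \(\ast\) (using \(\bar{\bar{\ast}}=\ast\)), and the translations ``\(\bar{\ast}\) is a continuous t-norm \(\Leftrightarrow\) \(\ast\) is a continuous t-conorm'' and ``\(\bar{\ast}=\wedge \Leftrightarrow \ast=\vee\)''.
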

 \begin{proof}
 It follows from Theorem \ref{22} and Proposition  \ref{21} immediately.
 \end{proof}
 
 \begin{thm}  
The convolution \(\ast_\va\) is a  \(t_r\)-conorm on \((\mathbf{L},\sq)\)   if and only if    \(\ast_\va\) is a t-conorm on \((\mathbf{L},\sq).\)
\end{thm}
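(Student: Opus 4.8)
The plan is to prove this in exact parallel with Theorem \ref{24}, since a \(t_r\)-conorm is by definition nothing more than a t-conorm that additionally is closed on \(\mathbf{J}\), is closed on \(\mathbf{J}^{[2]}\), and satisfies the boundary identity \(S(\overline{[0,1]},\overline{[a,b]})=\overline{[a,1]}\). One direction is then immediate: every \(t_r\)-conorm is, by its very definition, a t-conorm, so nothing need be checked.

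For the reverse direction I would assume that \(\ast_\va\) is a t-conorm on \((\mathbf{L},\sq)\) and read off the three extra requirements directly from Proposition \ref{8.1}. Clause (ii) of that proposition states precisely that \(\ast_\va\) is closed on \(\mathbf{J}\) and on \(\mathbf{J}^{[2]}\), while clause (iii) yields the required boundary value \(\overline{[0,1]}\ast_\va\overline{[a,b]}=\overline{[a,1]}\). As these are exactly the clauses that upgrade a t-conorm to a \(t_r\)-conorm, the conclusion follows at once.

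There is essentially no obstacle, because all the real work has been front-loaded into Proposition \ref{8.1}, itself proved by dualizing Proposition \ref{8} through the order-reversing isomorphism \(\neg\) of Proposition \ref{..24}. The only point demanding care is the boundary computation: for a continuous t-conorm \(\ast\) one has \([0,1]\ast[a,b]=[a,1]\) (rather than \([0,b]\) as in the t-norm case), so combining this with the characteristic-function identity \(\bar{A}\ast_\va\bar{B}=\overline{A\ast B}\) gives \(\overline{[0,1]}\ast_\va\overline{[a,b]}=\overline{[a,1]}\), which is the value demanded by the definition of a \(t_r\)-conorm. Thus, as with Theorem \ref{24}, the proof reduces to invoking Proposition \ref{8.1}.
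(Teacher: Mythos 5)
Your proof is correct and is essentially the paper's own: the paper's entire argument is that the claim ``is straightforward from Proposition \ref{8.1}'', i.e., one direction is definitional and the non-trivial direction reads closure on \(\mathbf{J}\) and \(\mathbf{J}^{[2]}\) together with the boundary identity directly off that proposition, exactly as you do. One small point in your favor: clause (iii) of Proposition \ref{8.1} as printed says \(\overline{[0,1]}\ast_\va\overline{[a,b]}=\overline{[a,b]}\), an evident typo for \(\overline{[a,1]}\), and your explicit computation \([0,1]\ast[a,b]=[a,1]\) for a continuous t-conorm supplies the correct value that the definition of a \(t_r\)-conorm actually requires.
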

\begin{proof}
It is straightforward from Proposition \ref{8.1}.
\end{proof}

 \section{Convolution}
 In this study, we have investigated the construction of t-norms on \((\mathbf{L},\sq)\) for T2 RFSs. The main results are as follows:
 \begin{itemize}
 \item[1)]  Under the condition that both \(\ast\) and \(\va\) are two binary operation on [0,1], and   \(\va\) is surjective, we have  provided the necessary and sufficient conditions under which \(\ast_\va\) is a t-norm (\(t_r\)-norm) on \((\mathbf{L},\sq)\), see Theorem \ref{24} and Theorem \ref{22}.  Dual results of  t-conorms and \(t_r\)-conorms have been obtained.  
 \item[2)] When \(\va\) is a border-continuous t-norm, the calculation of t-norm \(\wedge_\va\) can be expressed simply,  see Proposition \ref{p}. 
 \item[3)]  We have provided the equivalent characterization  of convolution  order \(\sq\) on \(\mathbf{L}\) through the \(\alpha\)-cut and the strong \(\alpha\)-cut, respectively,  see Proposition \ref{17}. 
  \end{itemize}
  For the further work, we will concentrate  on   characterizations of  t-norms on \((\mathbf{L_u},  \sq)\). 

\section*{Acknowledgement}
The author acknowledges the support of the Southwest Minzu University Research  Startup Funds  (No:RQD2024011).

\vfill
\end{document}